\newcommand{\gep}{\epsilon}
\newcommand{\E}{\mathbb{E}}
\newcommand{\N}{\mathbb{N}}
\newcommand{\R}{\mathbb{R}}
\newcommand{\V}{\mathbb{V}}
\newcommand{\Z}{\mathbb{Z}}
\title{A nonlocal traffic flow model with stochastic velocity} 
\author{Timo Böhme\footnotemark[1], \; Simone Göttlich\footnotemark[1], \; Andreas Neuenkirch\footnotemark[1]}
\date{\today}
\begin{document}
\theoremstyle{plain}
\newtheorem{satz}{Theorem}[section] 
\newtheorem{lemma}[satz]{Lemma}
\newtheorem{proposition}[satz]{Proposition}
\newtheorem{corollary}[satz]{Corollary}
\theoremstyle{definition}
\newtheorem{definition}[satz]{Definition}
\newtheorem{example}[satz]{Example}
\theoremstyle{remark}
\newtheorem{bemerkung}[satz]{Bemerkung}
\theoremstyle{remark}
\newtheorem{remark}[satz]{Remark}
\theoremstyle{remark}
\newtheorem{assumption}[satz]{Assumption}

\maketitle

\footnotetext[1]{University of Mannheim, Department of Mathematics, B6, 68159 Mannheim, Germany (\{goettlich,neuenkirch\}@uni-mannheim.de, ti.bohme@gmail.com).}

\begin{abstract}
\noindent
In this paper, we investigate a nonlocal traffic flow model based on a scalar conservation law, where a stochastic velocity function is assumed. 
In addition to the modeling, theoretical properties of the stochastic nonlocal model are provided, also addressing the question of well-posedness. 
A detailed numerical analysis offers insights how the stochasticity affects the evolution of densities.
Finally, numerical examples illustrate the mean behavior of solutions and the influence of parameters for a large number of realizations.

\end{abstract}

{\bf AMS Classification.} 35L65, 60H30, 65M12, 90B20

{\bf Keywords.} nonlocal scalar conservation laws, traffic flow, stochastic velocities, numerical simulations 

\section{Introduction} 
Macroscopic traffic models that incorporate hyperbolic conservation laws have been intensively studied in the last decades. For a comprehensive overview we refer to~\cite{garavellohanpiccoli2016book,GaravelloPiccoliBook}. 
These models have undergone numerous iterations and given rise to various offspring models.
An especially novel and actively researched class are 
nonlocal traffic flow models, see e.g.~\cite{BlandinGoatin2016,bressan2020traffic,Chiarello2018,Coclite2024,ColomboGaravelloMercier2012,colombo2020local, Friedrich2018,Huang2022,keimer2018nonlocal} for different directions.
This type of model allows not only the inclusion of local traffic but also the integration of information from traffic on the visible road ahead into the driving behavior.
From a practical perspective, the motivation of nonlocal traffic models might be that self-driving vehicles do not only rely on their own sensory data but also include remotely obtained data on the traffic situation down the whole road. Hence, in order to save resources and possibly travel time, a smart self-driving vehicle will reduce its speed anticipatory, even if congestions are further away.

However, it is well known that traffic as a whole is subject to stochastic influences and two major interpretations of such influence exist.
First, this influence can be truly stochastic,
as for example a human driver can only estimate the velocities of the other vehicles ahead or a self-driving vehicle may be prone to measurement errors.
Second, this can be understood as an imperfection to our knowledge, regarding the true behavior of the vehicles
for a perceived traffic situation.
In this view, vehicles behave deterministically, but our lack of knowledge regarding hidden variables prevents us from accurately predicting their behavior.
As empirical data indicates that deterministic relations fail especially for crowded roads, 
many stochastic extensions to local traffic models have been proposed, see e.g.~\cite{Jabari2012,Li2012,SopasakisKatsoulakis2006}.
In a broader sense, those models belong to the class of stochastic conservation laws with random fluxes, see e.g. \cite{Badwaik2021,Garnier2012,Mishra2012,Risebro2015}. 
Inspired by the approach proposed in~\cite{Li2012}, we aim to investigate how stochasticity may be treated within the framework of nonlocal traffic models that have been only considered deterministically so far. 
The stochastic model in~\cite{Li2012} assumes the fundamental diagram to be a random function, with an error term affecting the maximum velocity.
While the multiplicative influence of the random variable ensures smoothness of the flux, it is slightly more restrictive than the additive approach we will be proposing. 
However, this approach poses the implicit benefit that a noise term with a high enough lower bound will already lead to non-negative velocities at all times. 
Further, this implicitly enables the model to  behave on average similarly to its deterministic basis. 

For our investigations, we focus on a scalar nonlocal model depending on the downstream traffic velocity as originally presented in~\cite{Friedrich2018}. 
Our stochastic extension includes a perturbed velocity function, introducing uncertainty into the flux function.
The overall aim of this paper is to introduce a stochastic nonlocal traffic model and additionally examine its mathematical properties, thereby contributing to the active research of nonlocal models and stochastic traffic flow in general. 
The work program is as follows: 
In Section~\ref{sec:NV} we revisit the traffic model with nonlocal velocity from~\cite{Friedrich2018} and address the most important assumptions. 
Following this, in Section~\ref{sec:snv} we construct a theoretically and practically well suited error term such that the error does not overwhelmingly affect the model,
but remains close to being realistic for modeling any noise.
With such an error term established, we proceed to propose our stochastic model and conduct an analysis of random velocities from a theoretical point of view. 
Throughout these proofs, our goal is to demonstrate how the incorporation of stochastic elements and the thoughtful construction of the error term change the deterministic approaches.
Having established and implemented suitable numerical schemes in Section~\ref{sec:num schemes nonloc_stoch}, we continue to rigorously prove the existence and uniqueness of solutions to the underlying model,
as well as other central properties, as for example the validity of a maximum principle in Section~\ref{sec:prop_exist}.
The model's behavior is examined numerically in Section~\ref{sec:num_results}, 
demonstrating its reaction to different traffic situations and the impact of parameter choices on the solution, both averaged over a large number of realizations.

\section{Nonlocal velocity model (NV)}\label{sec:NV}

In this section, we briefly recall the original deterministic nonlocal velocity model introduced in \cite{Friedrich2018}.
While local models like the Lighthill-Whitham-Richards model~\cite{LighthillWhitham,Richards} consider a flux $f(\rho)=\rho v(\rho)$, dependent only on $\rho=\rho(t,x)$ for $t\in(0,T]$ and $x \in \R$, nonlocal models consider a weighted mean of the downstream traffic, resulting in a pathwise dependent flux. 
Model-wise, one can understand this behavior as a limit of a nonlocal Follow-the-Leader model, where a vehicle adapts its velocity dependent on a fixed amount of multiple leaders and their respective distances or their corresponding velocities \cite{Chiarello2020}.
This idea was originally developed based upon the downstream density \cite{BlandinGoatin2016}. However, we focus on the velocity function and thus the downstream velocity. 
Besides important results in terms of well-posedness and the inheritance of the maximum principle, the NV model shows a more natural behavior in the dispersion of congestions~\cite{Friedrich2021_diss}. 
Moreover, it is capable of modeling whole networks \cite{Friedrich2021}, making it a candidate for traffic flow optimization. As such, this model will become our main access point to stochastic nonlocal models. 

In the NV model a vehicle positioned at $x_0 \in \R$ evaluates the velocity over the interval $[x_0,x_0+\eta],$ where $\eta>0$ is the nonlocal range, i.e. the distance a driver or a self-driving vehicle is capable to assess. 
The dynamics can be described by the conservation law
\begin{align}\label{eq:NV}\tag{NV}
	\partial_t\rho+\partial_x(\rho(W_\eta * v(\rho)))=0,
\end{align}
where
\begin{align*}
	(W_\eta * v(\rho))(t,x):=\int_{x}^{x+\eta}  W_{\eta}(y-x) v(\rho(t,y)) \,dy, \quad \eta>0,
\end{align*}
and the Cauchy problem is equipped with initial conditions of the form
\begin{align}\label{eq:CP_BV}
	\rho(0,x)=\rho_0(x) \in (L^1 \cap \text{BV})(\R;[0,\rho^{max}])
\end{align}
for $\rho^{max}>0$ given. 
Here, $W_\eta$ is a kernel function denoting how the velocities are weighted with respect to the distance of an evaluating vehicle. 
We need to make the following assumptions on $W_\eta$ to obtain meaningful and stable solutions.

\begin{remark}\label{rem:ass_W} Given $\eta>0$, we assume that
$$ W_\eta \in C^1([0,\eta];\R^+) \;\;  \text{with} \;\;  W_\eta' \leq 0,\quad \int_{0}^{\eta} W_\eta(x) \,dx=W_0, \quad \lim_{\eta \to \infty} W_\eta(0)=0.$$
More insights on how to interpret these assumptions and counterexamples proving their necessity can be found in \cite{Chiarello2018} and \cite{Friedrich2021_diss}.
We remark that for the numerical experiments presented in this paper, we restrict to a concave kernel $W^{conc.}_\eta (x) = 3 (\eta^2-x^2) \frac{1}{2\eta^3}$, assigning higher weights to the velocities directly ahead of each vehicle compared to distant ones.
\end{remark}

Regarding the velocity function, the following assumptions, comparable to the ones of the LWR model, need to be made.
\begin{remark}\label{rem:ass_v_ND} Given $\rho^{max}>0$, we assume for the velocity $v(\rho):$
$$v \in C^2([0,\rho^{max}];\R^+) \;\; \text{with} \;\; v'\leq 0, \quad v(0)=v^{max}>0.$$
	We do not need to assume $v(\rho^{max})=0$ and it is apparent that $v(\rho) \leq v^{max}$ holds.
 \end{remark}

A major section of \cite{Friedrich2018}
and the cornerstone for our central Theorem \ref{thm:ex_uniq_sNV_extended} lies in establishing the existence and uniqueness of weak entropy solutions for the Cauchy Problem.
Due to the flux being pathwise dependent on the spatial domain some nonlocal theory has to be established to firstly define such solutions.
Referring to \cite[Def. 1]{BlandinGoatin2016} for a generalized definition of nonlocal weak solutions, we note that for entropy solutions in the sense of Kru\v{z}kov \cite{Kruzkov1970}, 
attention must also be paid to the partial derivatives
of $f(t,x,\rho):=\rho\bigl(W_\eta * v(\rho)\bigr)(t,x)$ with respect to $x$. As a special case of \cite[Def. 2]{BlandinGoatin2016} and thus as in \cite[Def. 2.11]{Friedrich2021_diss}, we define them as follows.

\begin{definition}[Nonlocal weak entropy solution]\label{def:nonlocal_weak_entropy_sol}\text{ }\\
	A function $\rho \in C([0,T];L^1(\R))$ with $\rho(t,\cdot) \in \text{BV}(\R;\R)$ is a weak entropy 
	solution to (\ref{eq:NV}) with (\ref{eq:CP_BV}), i.e. the Cauchy problem, if
	\begin{align*}
		\int_{0}^{T} & \int_{-\infty}^{\infty} |\rho-c|  \partial_t \phi +\text{sign}(\rho-c)(f(t,x,\rho)-f(t,x,c)) \partial_x \phi -\text{sign}(\rho-c) \partial_x f(t,x,c) \phi \,dx \,dt \\
		+& \int_{-\infty}^{\infty} |\rho_0(x)-c|\phi(0,x) \,dx \geq 0
	\end{align*}
	holds for all non-negative test-functions $\phi \in C_0^1([0,T] \times \R ; \R^+)$ and any constant $c\in \R$.
	This reduces to \cite[Def. 1]{BlandinGoatin2016} for bounded $\rho$ and special choices of $\phi$ and $c$ \cite[2.12]{Friedrich2021_diss}.
\end{definition}

\begin{remark}\label{rem:drop_entropy_cond}
The condition on the solution to (\ref{eq:NV}) being an \emph{entropy} one, to be unique can be dropped as discussed in \cite[2.2, 2.3]{Friedrich2021_diss}. The proof as done in \cite[2.3.2]{Friedrich2021_diss}, employs a fixed-point approach whilst showing that the characteristics do not cross. 
Yet, our approach maintains the entropy condition for the stochastic model to accommodate traditional proof techniques, while also indicating the potential for relaxing the entropy condition in our specific model as well.
\end{remark}

\section{Modeling of stochastic nonlocal velocity} \label{sec:snv}

We now derive our main model and present central results, while sketching how adding noise influences the comparable deterministic proofs. In particular, we will encounter technical challenges when adding noise. 
The proposed model  can be categorized as a nonlocal velocity model whose flux is subject to a random perturbation. 
From now on, we will work on a suitable underlying probability space $(\Omega,\mathcal{A},\mathbb{P})$. 

\subsection{Derivation}
We adopt a model-driven approach, adding a noise term to the considered quantity. 
In the case of the NV model, our approach is consequently, to add noise to the velocity
\begin{align*}
    \partial_t \rho + \partial_x \Bigl(\rho \bigl(W_\eta*(v(\rho)+\epsilon)\bigr)\Bigr) = 0,
\end{align*}
where $\epsilon=\{\epsilon(t), t \in [0,T]\}$ is a stochastic processes with suitable properties. 
We aim to add noise that induces meaningful and interpretable perturbations of the deterministic model.
As hinted, the reasons for adding noise include:
\begin{enumerate}
    \item Noise as an imperfection to the measured velocities: Either humans or autonomous vehicles might be prone to measurement errors. For this, it is reasonable to assume that the noise has mean zero and is bounded. 
    \item Noise as an uncertainty on the flux: Related to the idea in \cite{Li2012}, we might assume that the data is in fact correctly perceived, but the fundamental diagram, i.e. the flux-density relationship, is not given by a single relationship but is subject to random fluctuations. 
    This would imply that the same driver reacts differently and not deterministically to the same data. 
    Especially for high densities, empirical data indicate \cite[Fig. 1]{Wang2011} that a single flux-density relationship is not capable to represent the reality.  
    This leads to the same assumptions on $\epsilon$ as before, since an unbounded biased error would indicate a wrong underlying deterministic fundamental diagram.
\end{enumerate}

In comparison to the local stochastic model in~\cite{Li2012}, we also build our model on a stochastic velocity function.
However, besides using a nonlocal base-model, we do not restrict 
the stochastic influence to $v^{max}$, but imply a general disturbance on the perceived velocities themselves. 
While this approach allows us more flexibility in modelling, it necessitates careful consideration of the preservation of non-negativity of the fluxes.

\subsubsection*{Non-negativity of the fluxes}
Due to the above considerations we will assume that
\begin{align*}
    \sup_{t \in [0,T]} \sup_{\omega \in \Omega} |\gep(t, \omega)| \leq \tau 
\end{align*}
for some $\tau  < v^{max}$
with $v^{max}$ as in Remark \ref{rem:ass_v_ND} as well as
$$ \gep(0)\equiv0, \qquad \mathbb{E}[\gep(t)]=0, \quad t \in (0,T]. $$

Now, for $\rho$ large enough $v(\rho)$ becomes sufficiently small such that $v(\rho)-\tau < 0$ 
and therefore $$\mathbb{P} \big(v(\rho(t,x))+\epsilon(t) < 0\big)>0.$$ In order to obtain meaningful velocities we
have to employ a limiter in the form of
\begin{align}\label{eq:v_e}
    v_\epsilon(\rho,t)=v_{\epsilon(t)}(\rho(t,x),t):=\max \{0,v(\rho(t,x))+\epsilon(t)   \},
\end{align}
which  consequently perturbs the mean dynamics of the system for high densities, i.e. low velocities.
Obviously for low densities, i.e. high velocities, $v_\gep(\rho,t) \leq v^{max}$ does \emph{not} hold in general. 
However, we have $0 \leq v_\gep(\rho,t) \leq  v^{max} + \tau \leq 2  v^{max}$ which is still 
consistent with the assumption of a deterministic maximal velocity. 

\subsubsection*{Modeling of the noise}
Additionally to the assumptions (a) mean zero and (b) uniformly bounded (by $v^{max}$)
   on our noise,  one would ideally like to add the properties
  \begin{enumerate}  [noitemsep]
    \item[(c)] independent and identically distributed (i.i.d.) in time,
    \item[(d)]  optional dependence of the variance on $\eta$. 
\end{enumerate}

A first choice for $\epsilon$ would be to use a uniformly distributed white noise, that is to assume that
$$ \epsilon(t) \sim \mathcal{U}(-\tau ,\tau ), \qquad t \in (0,T],$$
where $\mathcal{U}(-\tau ,\tau $) denotes the uniform distribution over $[-\tau ,\tau ]$, and
that
\begin{align}\label{noise-naive}
	\epsilon(t_1),\,\epsilon(t_2) \quad \textrm{are (stochastically) independent for all } \,\, t_1 \neq t_2 \in (0,T].
\end{align}
However, it is a classical result, that such a process does not have a measurable realization as a map from $(\Omega, \mathcal{A})$ to $(\mathbb{R}^{[0,T]}, \mathcal{B}(\mathbb{R}^{[0,T]}))$, and objects as \(\int_0^T 	\epsilon(s,\omega) \, ds\)
are consequently not well defined. See, e.g. \cite[4.2]{Korezlioglu1992} for an analogous result for Gaussian white noise; the reasoning for the Gaussian case can be easily adapted.
Hence, it becomes necessary to relax the third criterion. 
We opt for a discrete-time error process that adequately satisfies condition (c) to a certain degree, while preserving the meaningfulness of our results.
 
For a fixed hyper-parameter $\delta_R$ and with $t^k=k\delta_R$, $R_T = \left\lfloor T/ {\delta}_R \right\rfloor$
 we use
\begin{align}\label{eq:construction_eps} 
\epsilon(t)=\sum_{k=1}^{R_T}  \epsilon^{k} \chi_{[t^k, t^{k+1}) }(t), \qquad t \in [0,T],  
\end{align}
where  $\epsilon^{1}, \ldots, \epsilon^{R_T}$ are i.i.d. $\mathcal{U}(-\tau ,\tau )$-distributed.
Thus, our noise will not not change instantaneously in time, but at a fixed time-grid. Due to this discrete time construction, the map $ \epsilon: (\Omega, \mathcal{A}) \rightarrow (\mathbb{R}^{[0,T]}, \mathcal{B}(\mathbb{R}^{[0,T]}))$ is measurable and the realizations $t\mapsto  \epsilon(t,\omega)$ are in particular bounded, integrable and BV.
Clearly, we loose property (c) for all $t_1 \neq t_2$.
We  now have for any $t_1,t_2 \in [0,T]$ with $t_1 \in [t^{n_1},t^{n_1+1}),\,t_2 \in [t^{n_2},t^{n_2+1})$ that
	\begin{align}\label{eq:naive_epsilon-2}
		\epsilon(t_1),\,\epsilon(t_2) \quad \textrm{are (stochastically) independent  if} \quad  n_1\neq n_2, 
	\end{align}
	and $\epsilon(t_1)=\epsilon(t_2)$ if $n_1 = n_2.$
   In particular, we obtain independence of  $\epsilon(t_1)$ and $\epsilon(t_2)$ if $|t_1-t_2| > \delta_R$.
For our numerical calculations we will generally assume that $ \delta_R \leq  \Delta t $. Thus, we will always be in the independent case, i.e. in the case of equation \eqref{eq:naive_epsilon-2}.
A comparison between two numerical time meshes with grid size $\Delta t$ and our noise time mesh with $\delta_R$ is visualized in 
Figure \ref{fig:numeric_vs_analytic}. 

\begin{figure}[htb!]
    \centering
    \includegraphics[scale=0.655]{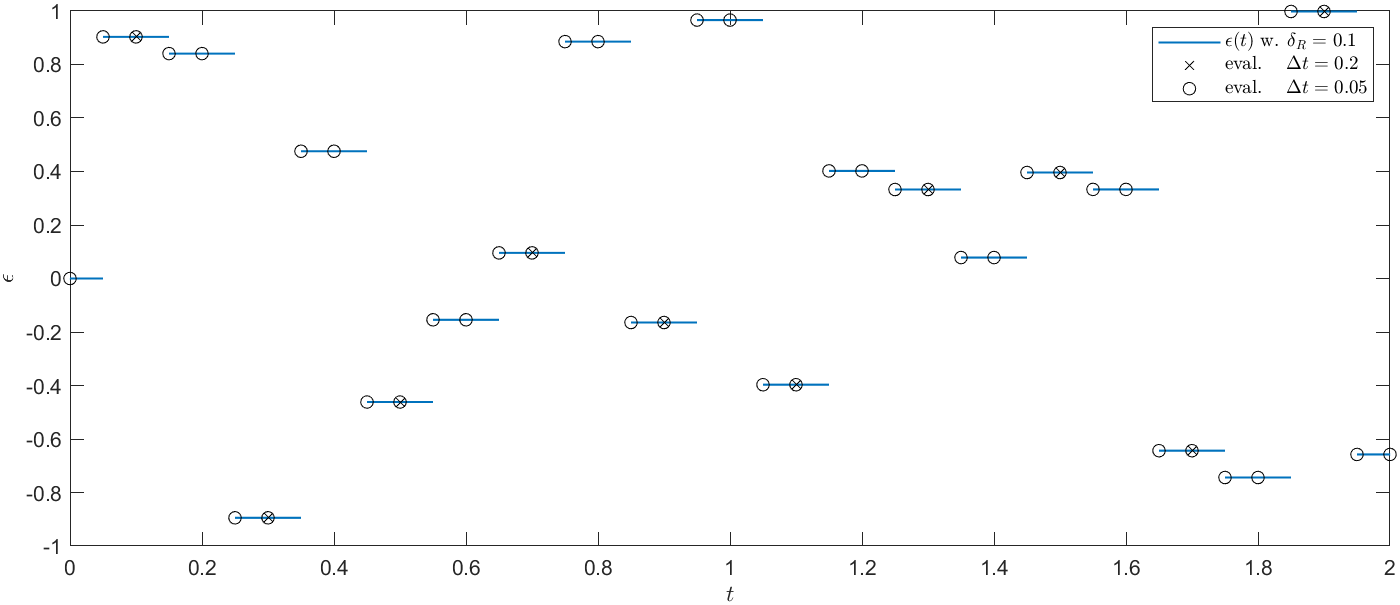}
    \caption{Realization of $\gep$ as constructed in (\ref{eq:construction_eps}) with $\tau=1$ and numerical evaluations.}
    \label{fig:numeric_vs_analytic}
\end{figure}

Later on, we also discuss another technical challenge of adding a noise term; namely the well-definedness of an entropy solution as a random variable.

\subsection{The stochastic nonlocal velocity model (sNV) }

The dynamics of the stochatic NV model are described by the following conservation law
\begin{align}\label{eq:sNV}\tag{sNV}
    \partial_t\rho+\partial_x \Bigl(\rho\bigl(W_\eta*v_\gep(\rho,t)\bigr)\Bigr)=0.
\end{align}
Here, the convolution is given by
\begin{align*}
    \Bigl(W_\eta * v_\gep(\rho,\cdot)\Bigr)(t,x):=\int_{x}^{x+\eta}  W_{\eta}(y-x) v_\gep(\rho(t,y),t) \,dy, \quad \eta>0.
\end{align*} 
The Cauchy problem is equipped with the initial conditions as in the deterministic model, cf. eq.~\eqref{eq:CP_BV}, 
with $\rho^{max}>0$ given and $v_\gep$ defined as in (\ref{eq:v_e}), 
where $\epsilon$ is given by \eqref{eq:construction_eps} with $\delta_R>0$ and $0 \leq \tau < v^{max}$.
In order to obtain a compact notation we abbreviate the convolution of random velocities similar as in (\ref{eq:NV}) and write 
\begin{align*}
    V_\gep(t,x):= \Bigl(W_\eta * v_\gep(\rho,\cdot)\Bigr)(t,x) \quad  \text{ and }  \quad f_\epsilon(t,x,\rho):= \rho V_\gep(t,x).
\end{align*}
For the the rest of this paper we fix without loss of generality $W_0=1$ 
and all of our simulations will be carried out for velocity functions satisfying $v^{max}=\rho^{max}=1$.\\
In addition to useful properties, we prove a central existence and uniqueness result to (\ref{eq:sNV}) over the course of Section \ref{sec:prop_exist}. 
We demonstrate, how the proofs rely upon the well-posedness of our stochastic velocity function along its associated error term.
Key factors include time integrability, the almost everywhere existing derivative of $v_\gep$ and its bounds alongside a presented numerical scheme with a suitable CFL condition.
Notably, the (pseudo) independence of the noise process is dispensable, enhancing the model's flexibility.

\subsection{Analysis of random velocities} \label{subsec:ana_v_e}
Prior to presenting our findings on the behavior and attributes of the sNV model, 
we discuss essential properties of the almost everywhere existing derivative of $v_\gep$ alongside an appropriate norm.
For now we assume a fixed time $t \in (0,T] $ and analyze the behavior of $v_\gep(\cdot,t)$, which  
we abbreviate 
as $v_\gep(\rho):=v_\gep(\rho,t)$.
The mapping
\begin{align*}
[0,\infty) \ni x \mapsto (x+c)^+ \in [0,\infty)
\end{align*}
is Lipschitz continuous with Lipschitz-constant one and satisfies
$$ (x+c)^+ - (y+c)^+ = \int_y^x \chi_{(0,\infty)}(z+c)dz, \qquad x,y \in \mathbb{R}.$$
 Consequently, the mapping
\begin{align*}
[0,\rho_{max}] \ni x \mapsto (v(x)+c)^+ \in [0,\infty)
\end{align*}
is  Lipschitz continuous with constant $\|v'\|_{\infty}$ and  of bounded total variation. 
Further, we have
\begin{align}\label{eq:MVP_leq}
(v(x)+c)^+ - (v(y)+c)^+ &= \int_y^x v'(z) \chi_{(0,\infty)}(v(z)+c)dz. 
\end{align} 
\begin{definition}\label{def:def_v'}
    Upon this consideration, we define 
\begin{align*}
    v_\gep'(\rho):=v'(\rho) \chi_{(0,\infty)}(v(\rho)+\gep).
\end{align*}
\end{definition}

While the actual derivative of $v_\gep$ coincides everywhere for $\gep\geq0$ with Definition \ref{def:def_v'}, such that we retrieve 
$v_{\gep} \in C^2([0,\rho^{max}];\R)$ again, it is not 
well defined at $\rho^*=v^{-1}(-\gep)$ for $\gep<0$.
For the latter case, the zero-set gap 
between the actual but undefined derivative and our definition
is negligible. 
This is due to the fact that given $v \in C^2([0,\rho^{max}];\R^+)$ and $\chi_{(0,\infty)} \geq 0$, we can invoke the mean value theorem for integrals to obtain for $x \neq y $:
\begin{align}\label{eq:MVP_eq}
(v(x)+c)^+ - (v(y)+c)^+ &= \int_y^x v'(z) \chi_{(0,\infty)}(v(z)+c)dz \nonumber \\ 
&= v'(\xi) \, \Pi(x,y) (x-y),
\end{align}
for some $\xi \in [x,y]$ with
\begin{align}\label{eq:Pi_x_y}
    \Pi(x,y):=\frac{1}{x-y}\cdot \int_y^x \chi_{(0,\infty)}(v(z)+c) \; \in (0,1].
\end{align}
Thus, mean value theorem arguments can be applied to $v_\gep$.
By the assumptions on the velocity function, $\sup_{\rho \in [0,\rho^{max}]}\left\lvert v_\gep'(\rho) \right\rvert$ is obtained on $\left\{\rho: v(\rho)+\gep >0 \right\}$ and
it follows immediately that
\begin{align}\label{eq:norm_approx}
       \sup_{\rho \in [0,\rho^{max}]}\left\lvert v_\gep'(\rho) \right\rvert
    \leq \sup_{\rho \in [0,\rho^{max}]}\left\lvert v'(\rho) \right\rvert 
    \quad \forall \gep \in [-\tau,\tau].
\end{align}

This analysis can now be transferred for variable $t \in (0,T]$ and $\omega \in \Omega$.
Here, Definition \ref{def:def_v'} translates in notation to
\begin{align*}
     v_\gep'(\rho(t,x),t)=v'(\rho(t,x)) \chi_{(0,\infty)}(v(\rho(t,x))+\gep(t)).
\end{align*}
As we are only interested in the spatial application of the mean value theorem and hence the spatial derivatives, we do not need an assessment of $\partial_t v_\gep(\rho,t)$, but must expand (\ref{eq:norm_approx}) to allow $v_\gep$ to be dependent on density and time. We start by defining an appropriate norm.
\begin{definition}\label{def:norm}
    For any $\omega \in \Omega$ we define the random variables
    \begin{align*}
        \left\lVert v_\gep \right\rVert (\omega) := \sup_{t \in [0,T]} \sup_{\rho \in [0,\rho^{max}]} \left\lvert v_\gep(\rho,t)(\omega)\right\rvert 
        \text{ and }
        \left\lVert v'_\gep \right\rVert (\omega) := \sup_{t \in [0,T]} \sup_{\rho \in [0,\rho^{max}]} \left\lvert v'_\gep(\rho,t)(\omega)\right\rvert, 
    \end{align*}
    while keeping the notation of $\left\lVert \cdot \right\rVert := {\left\lVert \cdot \right\rVert}_\infty$ for the one dimensional case, as before. 
\end{definition}
Now (\ref{eq:norm_approx}) adapts to the case of variable $t$ as the estimates hold independently of time, and we can add the notation with respect to $t$ and $\gep(t,\omega)$ to both sides. 
Thereby we can apply $\sup_{t \in [0,T]}$ to (\ref{eq:norm_approx}) and obtain 
\begin{align}\label{eq:Dv_e_norm_approx_final}
    \left\lVert v'_\gep \right\rVert (\omega) \leq \left\lVert v' \right\rVert 
    \quad \forall \omega \in \Omega. 
\end{align}
Hence, we have derived a deterministic bound on the spatial derivative of our random velocities. 
For completeness regarding $v_\gep$, it immediately holds
\begin{align}\label{eq:v_e_norm_approx_final}
    v^{max} = \left\lVert v \right\rVert \leq \left\lVert v_\gep \right\rVert (\omega) \leq v^{max}+\tau < 2 v^{max} \quad \forall \omega \in \Omega.
\end{align}

\subsection{Mean and variance of random velocities}\label{subsec:stoch_v_e}
Next, we briefly analyze the mean and variance of $v_\gep$ and $V_\gep$ to gain insight into the anticipated behavior of the sNV model itself.
We again fix any $t \in (0,T]$, an admissible density $\rho$ and abbreviate $v_\gep(\rho)(\omega):=v_\gep(\rho,t)(\omega)$.
Further, let the standard assumptions on $v$ and $W_\eta$ in Remarks \ref{rem:ass_W} and \ref{rem:ass_v_ND} hold.
For a fixed $t,$ the noise of (\ref{eq:construction_eps}) fulfills
\begin{align}\label{eq:exp_var_gep}
    \E[\gep(t)]=0,\quad \V(\gep(t))=\frac{1}{3}\tau^2  .
\end{align}
Straightforward calculations give:
\begin{lemma}\label{lem:exp_var_v}
    Let the Remarks \ref{rem:ass_W}, \ref{rem:ass_v_ND} hold, then
    \begin{align*}
        \E[v_\gep](\rho)&=\frac{1}{4\tau}\Bigl(\bigl(\tau+v(\rho)\bigr)^2-\max \bigl\{0,v(\rho)-\tau\bigr\}^2  \Bigr), \\
        \V(v_\gep)(\rho)&=\frac{1}{6\tau} \Bigl(\bigl(\tau+v(\rho)\bigr)^3-\max \bigl\{0,v(\rho)-\tau\bigr\}^3 \Bigr) - \E[v_\gep]^2(\rho). 
    \end{align*}
\end{lemma}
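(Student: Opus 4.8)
The plan is to exploit the pointwise identity $v_\gep(\rho) = (v(\rho)+\gep(t))^+$ from \eqref{eq:v_e} together with the fact that, for the fixed $t$ under consideration, the noise $\gep(t)$ is $\mathcal{U}(-\tau,\tau)$-distributed with density $\frac{1}{2\tau}\chi_{[-\tau,\tau]}$. Writing $a:=v(\rho)$ for brevity (a fixed number in $[0,v^{max}]$ once $\rho$ and $t$ are frozen), both moments reduce to elementary integrals of the maps $u\mapsto u^+$ and $u\mapsto (u^+)^2$ against the uniform density, so the whole proof is a short computation.

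First I would compute the first moment. Transferring the expectation onto the density of $\gep(t)$ and substituting $u = a+e$ gives
\[
\E[v_\gep](\rho) = \int_{-\tau}^{\tau} (a+e)^+ \,\frac{1}{2\tau}\,de = \frac{1}{2\tau}\int_{a-\tau}^{a+\tau} u^+\,du.
\]
The integrand vanishes on $\{u\le 0\}$, so the effective lower limit is $\max\{0,\,a-\tau\}$, while the upper limit $a+\tau$ is always positive. Integrating $u$ over $[\max\{0,a-\tau\},\,a+\tau]$ yields $\frac{1}{2}\big((a+\tau)^2 - \max\{0,a-\tau\}^2\big)$, and dividing by $2\tau$ reproduces the claimed expression for $\E[v_\gep](\rho)$.

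The second moment is obtained in exactly the same way with $u^+$ replaced by $(u^+)^2$:
\[
\E[v_\gep^2](\rho) = \frac{1}{2\tau}\int_{\max\{0,a-\tau\}}^{a+\tau} u^2\,du = \frac{1}{6\tau}\big((a+\tau)^3 - \max\{0,a-\tau\}^3\big).
\]
The variance formula then follows immediately from $\V(v_\gep)(\rho) = \E[v_\gep^2](\rho) - \E[v_\gep]^2(\rho)$, which is precisely the stated identity.

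The only point requiring any care — and the nearest thing to an obstacle — is the correct handling of the clamp at $0$: one must track whether the lower substitution limit $a-\tau = v(\rho)-\tau$ is negative (high density, velocity below $\tau$) or nonnegative (low density), since in the former regime part of the integration range is annihilated by the positive part. Both regimes are captured uniformly by the term $\max\{0,\,v(\rho)-\tau\}$, which is exactly why it appears in the statement, and verifying that the single formula reproduces both cases is the substance of the argument. Everything else is routine polynomial integration, consistent with the paper's description of these as \emph{straightforward calculations}.
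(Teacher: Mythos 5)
Your proof is correct and is precisely the ``straightforward calculation'' the paper alludes to without writing out: integrating $(v(\rho)+e)^+$ and its square against the uniform density $\tfrac{1}{2\tau}\chi_{[-\tau,\tau]}$, with the clamp at zero absorbed into the lower limit $\max\{0,v(\rho)-\tau\}$. Nothing is missing, and your handling of the two regimes $v(\rho)\gtrless\tau$ is exactly the point that makes the single closed-form expression valid.
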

Since
$$ \max\{0,v(\rho(t,x)) +\epsilon(t)\} \geq  v(\rho(t,x)) +\epsilon(t),$$
we have
\begin{align}\label{eq:2.19}
	\E[v_\gep](\rho) \geq v(\rho) =  \E[v+\gep](\rho).
\end{align}
Moreover, since
$$ (\max\{0,v(\rho(t,x)) +\epsilon(t)\})^2 \leq  (v(\rho(t,x)) +\epsilon(t))^2,$$
it follows that
\begin{align}\label{eq:lower_var}
    \V(v_\gep)(\rho) \leq \V(\gep)=\V(v(\rho)+\gep).
\end{align}
Thus, adding the noise leads to an increase in the mean velocity, while the variance of the velocity is bounded by the variance of the noise.
Finally, note that for large velocities, i.e. $v(\rho)>\tau$, the limiter is not active and we have
$$    \E[v_\gep](\rho)  =  v(\rho)   , \qquad  \V(v_\gep)(\rho) = \V(\gep).$$

These observations are visualized in Figure \ref{fig:stoch_on_v_eps}, 
where the upper and lower bounds of $v_\gep(\rho)$, i.e. $\max\{0,v(\rho) \pm \tau\}$, are displayed by the black lines 
and the area where the limiter \emph{can} become active, i.e. to the left and right of $\rho^*=v^{-1}(\tau)$, by the vertical lines.\\

\begin{figure}[htb]
    \centering
    \begin{subfigure}{.5\textwidth}
        \centering
        \includegraphics[width=1\linewidth]{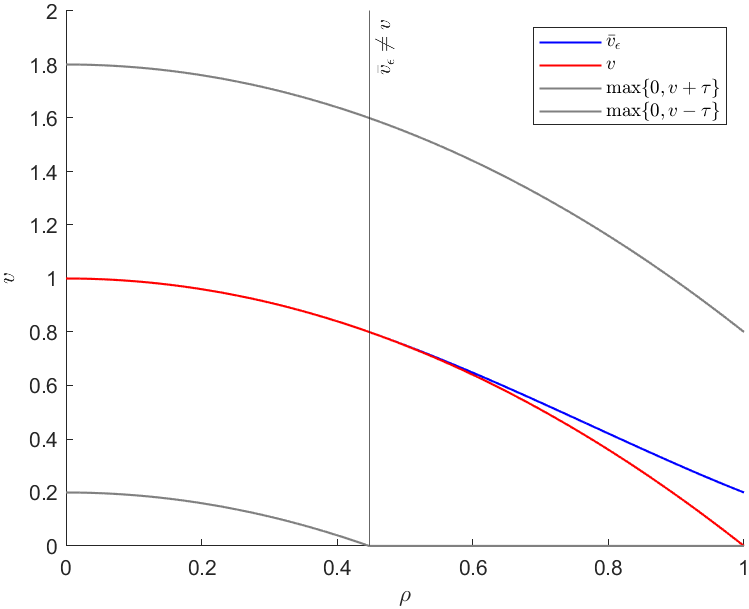}
      \end{subfigure}%
      \begin{subfigure}{.5\textwidth}
        \centering
        \includegraphics[width=1\linewidth]{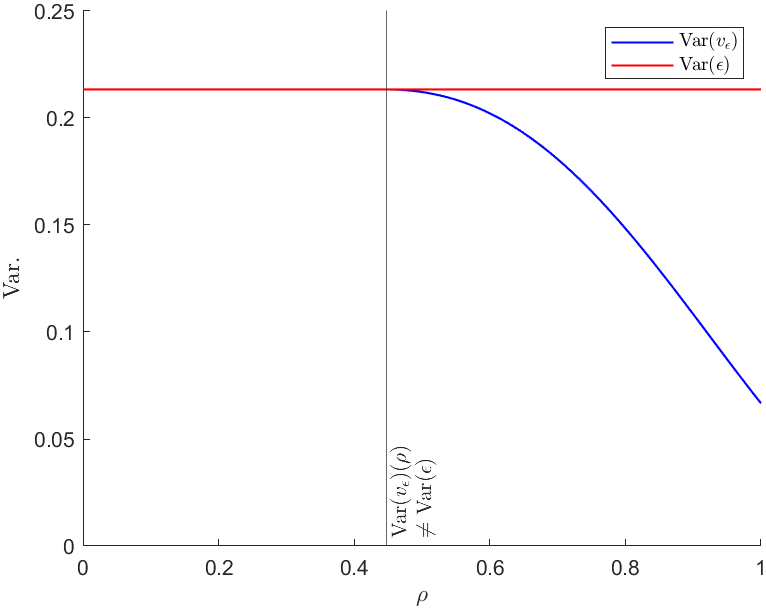}
      \end{subfigure}
    \caption{$v(\rho)=1-\rho^2$, the expectation ($\overline{v}_\gep$) of $v_\gep$ and its variance, given $\tau=0.8$.}
    \label{fig:stoch_on_v_eps}
\end{figure}
\vspace{-0.3cm}
Regarding the actual velocity of interest, $V_\gep$,
the behavior of $v_\gep$ translates but is additionally influenced by $W_\eta$ and $\eta$.
Due to $W_\eta,\, v_\gep \geq 0$, Fubini's Theorem, Lemma \ref{lem:exp_var_v} and the monotonicity of $W_\eta$, it holds
\begin{align}\label{eq:2.22}
    \E[V_\gep(t,x)](\rho) = \bigl(W_\eta*\E\left[v_\gep\right] \bigr) \bigl(t,x\bigr)(\rho) 
    \geq \bigl(W_\eta*v \bigr) \bigl(t,x\bigr)(\rho)
    =V(t,x)(\rho).  
\end{align}
Especially the deviation of $\E[V_\gep]$ from $V$ is not only obtained for $\rho \geq \rho^*$ but already for $\rho \geq \rho^*-\eta$. 
However, due to the nature of the kernel function with respect to $\eta$ and the proportionality of the velocities in the convolution, the strength of the deviation in the area $[\rho^*-\eta,\rho^*]$ can be rather weak.
Lastly we define for further reference
\begin{align*}
    \overline{v}_\gep(\rho):=\E[v_\gep](\rho) \quad \text{ and } \quad \overline{V}_\gep(t,x)(\rho):=\E[V_\gep(t,x)](\rho).
\end{align*}
    
\subsection{Characteristics} \label{subsec:chars}
Next, we initiate the examination of the model's behavior through its characteristics.
More precisely, we provide empirical evidence that the characteristics do not cross, show how the noise changes the behavior of (\ref{eq:NV}) and how the average behaviour may be captured. 
Here we combine the notation of \cite[1.1]{Holden2015} with \cite[Def. 2.3.2]{Friedrich2021_diss}.

\begin{definition}[Characteristics of (\ref{eq:sNV})]\text{ }\\
Let 
    \begin{align*}
    V_\gep[\rho](t,x):=(W_\eta * v_\gep(\rho,\cdot))(t,x)
    \end{align*}
and for any given $\omega \in \Omega$, let $\rho=\rho(\omega)$, be a weak solution to (\ref{eq:sNV}).  
Then the characteristics $X_{\rho,\gep}:(0,T) \times \R \times (0,T) \rightarrow \R$ are the solutions
to the integral equation
    \begin{align*}
        X_{\rho,\gep}[t_0,x_0](t):=x_0 + \int_{t_0}^{t} V_\gep[\rho] \Bigl(s, X_{\rho,\gep}[t_0,x_0](s) \Bigr) \, ds, \qquad t \in [t_0,T],
    \end{align*}
    with  $(t_0,x_0) \in [0,T] \times \R$.
Thus, they are the solution to the ODE
    \begin{align}\label{eq:char_ODE}
        \frac{d}{dt} X_{\rho,\gep}[t_0,x_0](t) &= V_\gep[\rho] (t, X_{\rho,\gep}[t_0,x_0](t)), \qquad t \in [t_0,T], \\
        X_{\rho,\gep}[t_0,x_0](t_0) &= x_0. \nonumber
    \end{align}

\end{definition}

The well-posedness of (\ref{eq:char_ODE}) is ensured by \cite[2.34]{Friedrich2021_diss}, as the proof directly translates due to the spatial differentiability of $V_\gep$ 
with slightly different bound $\mathcal{V}_{\gep}^\infty$, which can be found in the Appendix as equation (\ref{eq:V_cal}). \\

\vspace{-0.3cm}
In order to evaluate (\ref{eq:char_ODE}) numerically,
we draw one realization of $\gep$ and approximate (\ref{eq:sNV}) for $\Delta x= 10^{-2}$ with the scheme (\ref{eq:SCHEM_sNV}) and $\Delta t$ satisfying (\ref{eq:sCFL}).
Hence, we obtain the time and space dependent velocities $V_\gep[\rho]$ and densities $\rho$. 
Next, we choose any $(t_0,x_0) \in \{0\}\times\R$  and invoke explicit Euler time-marching (e.g. \cite[1.1]{Holden2015}) with the same time mesh as before. 
Doing this, we approximate the evaluation of $V_\gep$ in the spatial coordinate at $X_{\rho,\gep}[t_0,x_0](t)$ by the closest known evaluation on the grid defined by $\Delta x$.
For comparability, we choose the same data as in \cite[2.33]{Friedrich2021_diss}, i.e. the initial data $\rho_0$
from Example \ref{ex:standart_ex} with nonlocal range $\eta=0.1$. 

\begin{example}\label{ex:standart_ex}
We consider the initial data 
\begin{align*}
    \rho_0=
    \begin{cases}
	1, & \text{if } x\in\left[ \frac{1}{3},\frac{2}{3} \right] \\
	\frac{1}{3}, & \text{else}
    \end{cases}
\end{align*}
 and velocity function $v(\rho)=1-\rho$, convoluted with a concave kernel $W_\eta^{conc.}$.
\end{example}
The results are plotted in Figure \ref{fig:char_1-p}. 
To demonstrate the impact of the error term, we  implied low disturbances ($\tau=0.2$) on the left and high disturbances ($\tau=0.9$) in the right graphs.\\
\begin{figure}[htb]
    \centering
    \begin{subfigure}{.5\textwidth}
        \centering
        \includegraphics[width=1\linewidth]{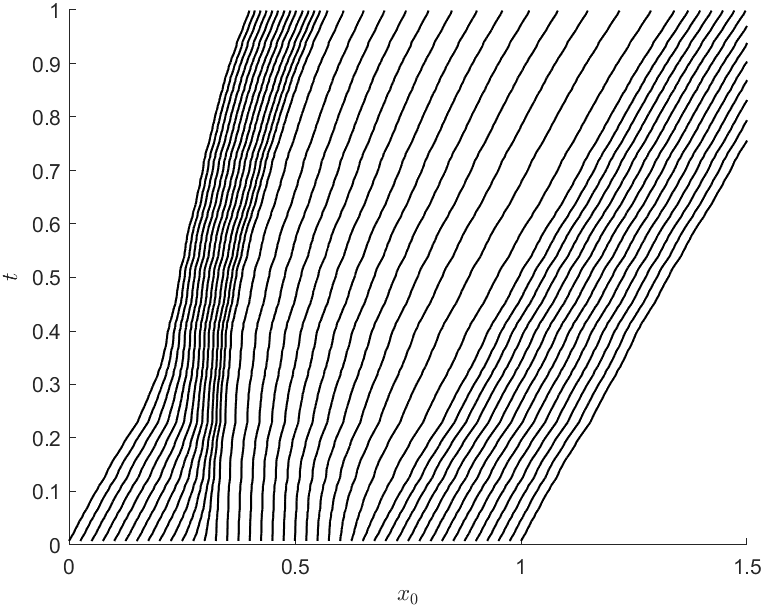}
      \end{subfigure}%
      \begin{subfigure}{.5\textwidth}
        \centering
        \includegraphics[width=1\linewidth]{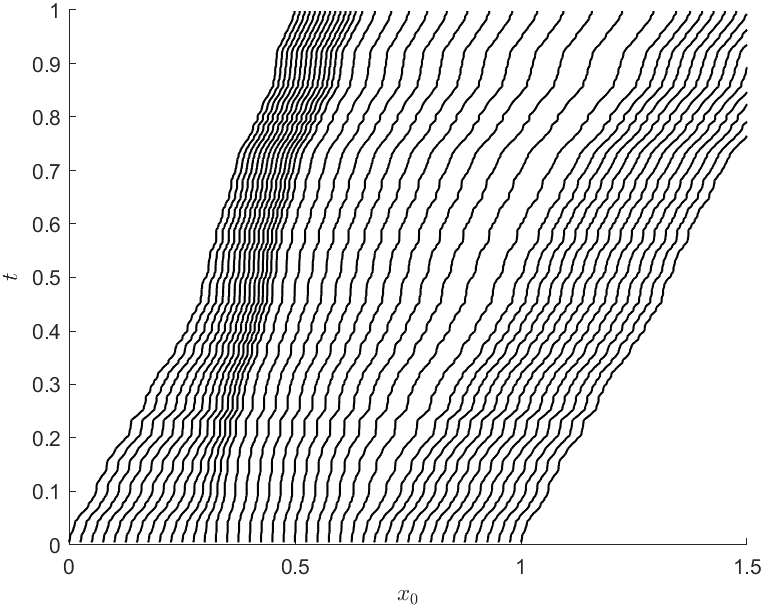}
      \end{subfigure}
    \caption{Characteristics of Example \ref{ex:standart_ex} to (\ref{eq:sNV}).} 
    \label{fig:char_1-p}
\end{figure}

As mentioned,
the characteristics do not cross, since at every time evaluation with stepsize  $\max\{\Delta t, \delta_R\}$ the slope of the characteristics changes everywhere on the spatial domain by the same degree.
The maximal and average slope-change is determined by the distribution of $\gep$, or rather $v_\gep$, and thereby dependent on
$\tau$. 
This does not hold in general, if the error term becomes spatially dependent.\\

We repeat our experiments on some slightly changed example, where the area of high 
initial density has been expanded for better visualization.
\begin{example}\label{ex:enalarged_ex}
    We consider the initial data
	\begin{align*}
		\rho_0=
		\begin{cases}
			1, & \text{if } x \in \left[ 0,1 \right] \\
			\frac{1}{3}, & \text{else}
		\end{cases}
	\end{align*}
    and velocity function $v(\rho)=1-\rho^2$, convoluted with a concave kernel $W_\eta^{conc.}$. 
\end{example}
Considering the analytical setup $\eta=0.2,\, \tau=0.8$, we then proceed as before to obtain
the characteristics. Instead of plotting only one realization of $\omega \in \Omega$,
we now simulate multiple ($N=30$) realizations of the characteristics and compare them to the deterministic ones
of (\ref{eq:NV}), in Figure \ref{fig:sNV_vs_NV_characteristcs}.\\

    \begin{figure}[htb!]
        \centering
        \includegraphics[scale=0.3538]{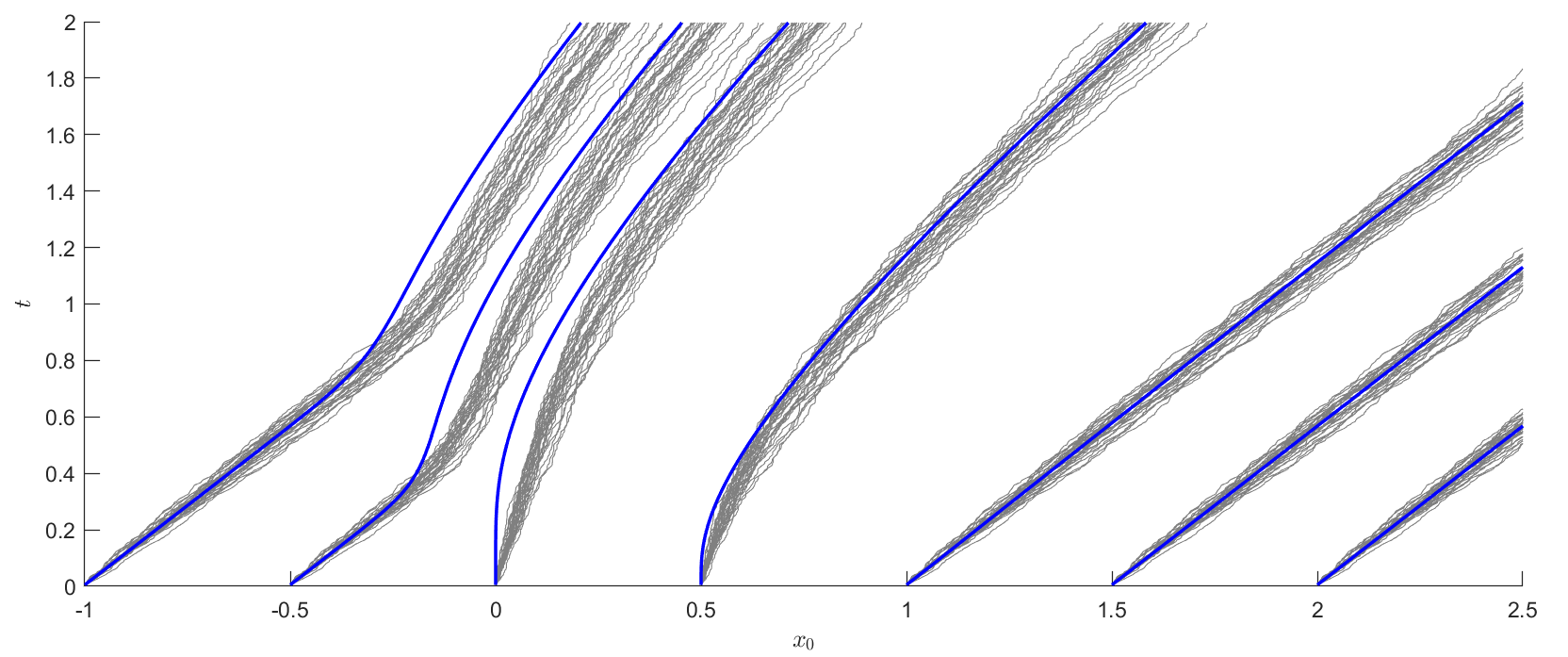}
        \caption{Characteristics of Example \ref{ex:enalarged_ex} to (\ref{eq:sNV}) in grey and
        (\ref{eq:NV}) in blue.}
        \label{fig:sNV_vs_NV_characteristcs}
    \end{figure}
    
Notice that vehicles, which always travel in an area of low downstream densities, i.e. right of $x_0=1$, behave on average like (\ref{eq:NV}).
However, vehicles entering an area of high downstream density behave differently.
For example, a vehicle placed at $x_0=-0.5$ moves initially 
in an area of low downstream density but then enters a congested area.
Hence, whilst being mean consistent at first, this changes as the evaluation ahead ($\eta=0.2$) picks up the congestion. 
Vehicles of (\ref{eq:sNV}) move faster in these areas, which is observable by the on average flatter characteristics and is backed up, by
our previous assessment of the expected velocities (\ref{eq:2.22}).\\

\begin{figure}[htb!]
    \centering
    \includegraphics[scale=0.3538]{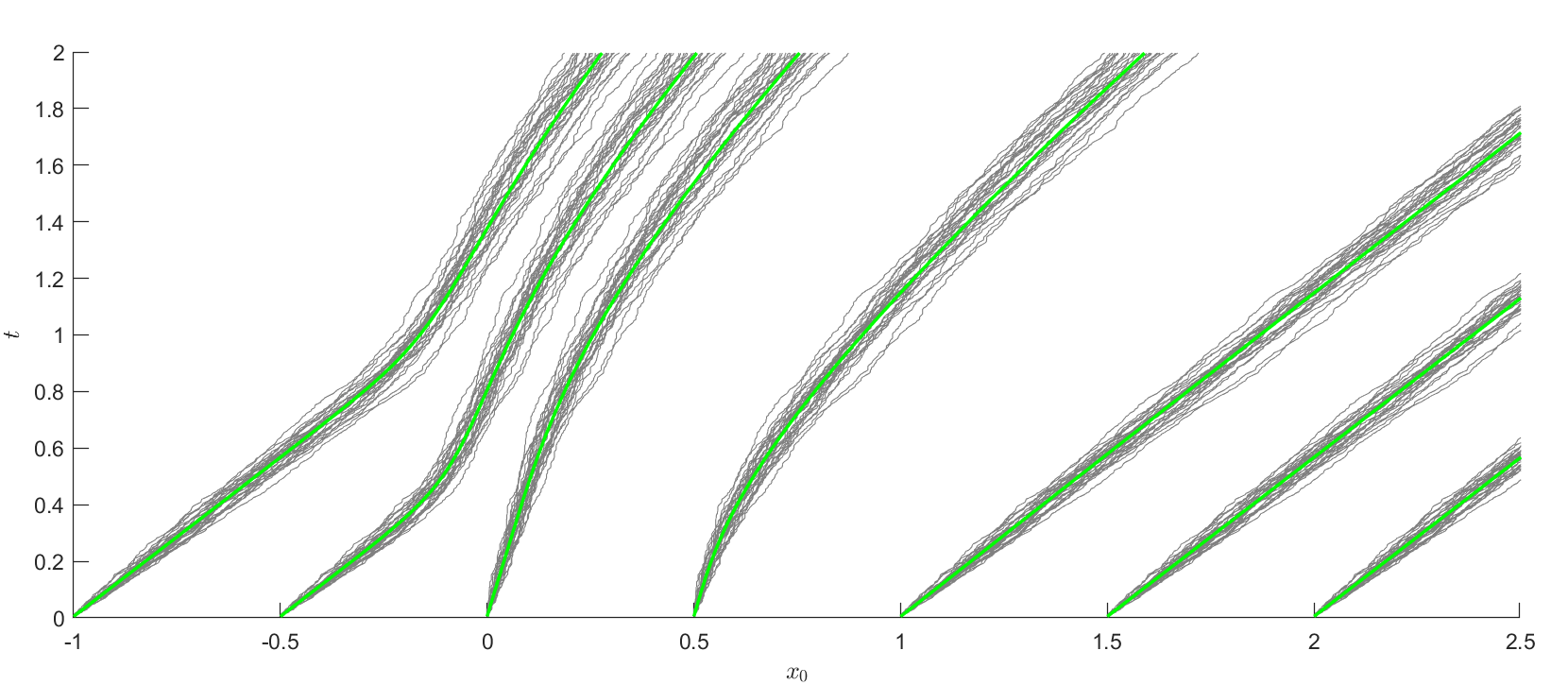}
    \caption{Characteristics of Example \ref{ex:enalarged_ex} to (\ref{eq:sNV}) in grey and
    (\ref{eq:NV}) using $\bar{v}_\gep$ in green.}
    \label{fig:sNV_vs_NV_characteristcs_expected}
\end{figure}
Next, we repeat the previous experiment in Figure \ref{fig:sNV_vs_NV_characteristcs_expected}, but compare the characteristics of $30$ (\ref{eq:sNV})-realizations with (\ref{eq:NV}) where we used the expected velocity function $\bar{v}_\gep$ instead of $v$ in the latter.
We observe that the resulting characteristics empirically align with the mean behavior of
(\ref{eq:sNV}). 
This observation, alongside further extensive Monte Carlo experiments conducted on the density level, leads to the conjecture that the mean density of (\ref{eq:sNV}) coincides, or can at least be adequately captured, by (\ref{eq:NV}) using $\bar{v}_\gep$. 
We intend to validate this hypothesis through additional numerical experiments and to-be-developed analytical methods in the future.

\section{Numerical scheme for the sNV model} \label{sec:num schemes nonloc_stoch}

We now enhance a deterministic nonlocal Godunov type scheme \cite[3.1]{Friedrich2018} to additionally incorporate the noise of the sNV model. Hence, the following coincides with  the deterministic scheme by formally setting $\gep \equiv 0$.

Assume an equidistant spatial grid, with cell centers $x_j$, cell interfaces $x_{j-1/2}$ and cell length $\Delta x= x_{j+1/2}-x_{j-1/2} \; \forall j \in \Z$. 
Further, we set the time mesh by $t^n=n\Delta t$ for $n=0,\dots,N_T$ with $N_T:=\left\lfloor T/\Delta t \right\rfloor$.
Let as usual $\rho_j^n:=\rho(t^n,x_j)$ and define the piecewise constant function 
\begin{align}\label{eq:mesh}
	\rho^{\Delta x}(t,x)=\rho_j^n \text{ for } (t,x)\in[t^n,t^{n+1})\times[x_{j-1/2},x_{j+1/2}).
\end{align} 
Then, the deterministic initial density $\rho_0$ is discretized by cell averages with respect to (\ref{eq:mesh}):
\begin{align*}
    \rho^0_j=\frac{1}{\Delta x} \int_{x_{j-1/2}}^{x_{j+1/2}} \rho_0(x)  \,dx, \quad j \in \Z. 
\end{align*}

In each time step the Riemann problems arising at the discontinuities between the numerical densities $\rho^n_j, \; j \in \Z$ are then solved exactly until the first shocks collide. Thus, the update of the cells densities is calculated as
\begin{align*}
    \rho_j^{n+1}=\rho_j^{n}+\frac{\Delta t}{\Delta x}\left(F^n_{j+1/2}(\rho_j^n)-F^n_{j-1/2}(\rho_{j-1}^n)\right),
\end{align*}
where the numerical flux $F^n_{j+1/2}$ is based on the solution to the Riemann problems at the cell interfaces and the actual flux.
To incorporate the error process, we use the technical simplification $\delta_R \leq \Delta t$ and draw a numerical evaluation of $\gep(t,\omega),\, t \in[0,T]$, by sampling $N_T-1$ observations according to (\ref{eq:construction_eps}):
\begin{align*}
    \gep^n:=\gep(t^n) \overset{\mathrm{iid}}{\sim} U(-\tau ,\tau ), \quad n=1,\dots,N_T
\end{align*}
and fix $\gep^0 = 0$.
We extend the discrete notation of $v_\gep$ to allow for the time dependency, introduced by $\gep^n$. 
Therefore, we define 
\begin{align}\label{eq:def_v^n}
    v_{\gep}^n(\rho_j^n)&:=v_\gep(\rho_j^n,t^n)=\max\{0,v(\rho_j^n)+\gep^n\},
\end{align}
dropping the index of $\gep^n$ with respect to $n$ on the left-hand side of the equation for a cleaner notation. 
This allows us to employ the numerical flux
\begin{align}\label{eq:num_flux}
	F_{j+1/2}^n(\rho_j^n)=\rho_j^n V_{\gep,j}^n, \;\; \text{with} \;\; V_{\gep,j}^n &= \sum_{k=0}^{N_\eta-1} \gamma_k v_\gep^n(\rho^n_{j+k+1}), \quad N_\eta=\left\lfloor \eta/\Delta x\right\rfloor,
\end{align}
given the kernel evaluation
\begin{align}
	\gamma_k &=\int_{k \Delta x}^{(k+1) \Delta x} W_\eta(x) \,dx, \quad k=0,\dots,N_\eta-1. \label{eq:gamma_k}
\end{align}

Thus, our time step update reads as
\begin{align}\label{eq:SCHEM_sNV}
	\rho_j^{n+1}=\rho_j^{n}-\lambda \left(\rho_j^n V_{\gep,j}^n-\rho_{j-1}^n V_{\gep,j-1}^n\right), \quad \lambda:=\frac{\Delta t}{\Delta x}.
\end{align}
Simultaneously as for the continuous case, the influence of stochastic transfers to $F^n_{j+1/2}$ and $\rho_j^n$.
Yet, notation with respect to $\gep$ of both shall be omitted.
 Due to the presumed accurate calculation of
  $\gamma_k \geq 0$ and our construction of $v_\gep$ we obtain
 \begin{align}\label{eq:V_gep_grt_0}
    0 \leq V_{\gep,j}^n \leq v^{max}+\tau \quad \forall j \in Z,\; \forall \omega \in \Omega,\; \forall n=0,\dots,N_T,
 \end{align}
 such that the Riemann problems are correctly solved by (\ref{eq:num_flux}). 
Next, we develop a fitting CFL condition and derive its deterministic bounds.
By definition of the norm $\lVert v_\gep \rVert$ (Def. \ref{def:norm}) for all $\omega \in \Omega$ it holds that
\begin{align*}
    {\lVert v_\gep^n \rVert}_{\infty}(\omega) = \sup_{\rho \in [0,\rho^{max}]} v_\gep(\rho,t^n)(\omega) 
    \leq \sup_{t \in [0,T]} \sup_{\rho \in [0,\rho^{max}]} v_\gep(\rho,t)(\omega)
    = \lVert v_\gep \rVert(\omega) 
    \overset{(\ref{eq:v_e_norm_approx_final})}{\leq} v^{max}+\tau
\end{align*}
and analogous
\begin{align*}
    {\lVert {v_{\gep}^n}' \rVert}_{\infty}(\omega) &\leq \lVert {v_\gep}' \rVert(\omega) 
    \overset{(\ref{eq:Dv_e_norm_approx_final})}{\leq}\lVert {v}' \rVert 
\end{align*}

Hence, proposing the following CFL condition for (\ref{eq:sNV})
\begin{align}\label{eq:sCFL} 
    \lambda \leq \frac{1}{\gamma_0 \lVert v' \rVert \rho^{max}+\lVert v_\gep \rVert(\omega)}, 
\end{align}
we obtain the following bounds
\begin{align*}
    \frac{1}{\gamma_0 \lVert v' \rVert \rho^{max}+v^{max} + \tau} 
    \leq \frac{1}{\gamma_0 \lVert v' \rVert \rho^{max}+\lVert v_\gep \rVert(\omega)} 
    \leq \frac{1}{\gamma_0 \lVert {v_{\gep}^n}' \rVert(\omega) \rho^{max}+\lVert v_\gep^n \rVert(\omega)}. \\
\end{align*}

\begin{remark}\label{rem:on_CFL}\leavevmode 
    \begin{itemize}
        \item The first inequality allows us to use a little more restrictive condition, without actually analyzing the
            occurring velocities to obtain a deterministic and easy to implement version of our CFL condition. 
            The second inequality provides initial information on the validity of this condition, as it effectively bounds
            the velocity and its derivative in every time step.
        
        \item Due to (\ref{eq:v_e_norm_approx_final}), equation (\ref{eq:sCFL}) is more restrictive than its deterministic counterpart
        \begin{align}\label{eq:CFL_NV}\tag{CFL}
	   \lambda \leq \frac{1}{\gamma_0 \left\lVert v' \right\rVert \rho^{max} + \left\lVert v \right\rVert },
        \end{align}
        whereas both coincide for $\tau=0$.
        Counterexamples that (\ref{eq:CFL_NV}) is not sufficient for (\ref{eq:sNV}) can easily be made.
    \end{itemize}
\end{remark}

\begin{remark}\label{num-mb}{
    By construction, the discretizations as given by the maps
    $$ \Omega \ni \omega \mapsto \rho_j^n(\omega) \in [0,\infty), \qquad   \Omega \ni \omega \mapsto V_j^n(\omega) \in  [0,\infty)$$
    are measurable from $(\Omega, \mathcal{A})$ to $([0,\infty),\mathcal{B}([0,\infty)))$ for all $n=0, \ldots, N_T$, $j \in \mathbb{Z}$, since they are constructed as measurable transformations of  the random variables $\epsilon(t^n)$, $n=0, \ldots, N_T$. 
    Thus, quantities as 
    $\mathbb{E} [\rho_j^n(\omega)]$ or quantiles of their distribution are well defined.}
\end{remark}
\section{Properties, existence and uniqueness }\label{sec:prop_exist} 
 Relying on the numerical scheme, we derive
 our central theorem regarding the existence and uniqueness of solutions to the Cauchy problem
 as well as necessary and helpful properties of the sNV model.
 To this end, we outline how the argumentation for the NV model transfers to
 the the sNV model, derive which modifications to the proofs are to be made and
 what any stochastic velocity function and its noise terms need to fulfill. 
 Since most of the proofs are rather technical, they are fully given in the appendix. 
 
\subsection{Properties of the sNV model}\label{subsec:properties}

\begin{lemma}[Discrete maximum principle]\label{lem:maxP}\text{ }\\
    Let the assumptions on $W_\eta$ and $v$ hold.
    Further, let the random variables $\rho_j^n$ be constructed by the scheme (\ref{eq:SCHEM_sNV}) with the CFL condition (\ref{eq:sCFL}).
    Then, the following holds
    \begin{align}\tag{\textasteriskcentered{}}\label{eq:max_p_*}
        \inf_{j \in \Z} \rho_j^0 \leq \rho_j^n \leq \sup_{j \in \Z} \rho_j^0 \quad \forall j \in \Z, \, n \in \N.
    \end{align}
\end{lemma}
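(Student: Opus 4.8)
The plan is to prove the statement pathwise. For fixed $\omega\in\Omega$ the perturbed velocity $\rho\mapsto v_\gep(\rho,t^n)(\omega)$ is an ordinary deterministic velocity function: by the analysis of Section~\ref{subsec:ana_v_e} it is nonnegative, nonincreasing in $\rho$, bounded by $\|v_\gep\|(\omega)\le v^{max}+\tau$ (see (\ref{eq:v_e_norm_approx_final})), and its a.e.\ derivative obeys $\|v_\gep'\|(\omega)\le\|v'\|$ (see (\ref{eq:Dv_e_norm_approx_final})). Hence it suffices to establish (\ref{eq:max_p_*}) for the scheme (\ref{eq:SCHEM_sNV}) with this fixed realization, after which the conclusion for the random variables $\rho_j^n$ follows because the bounds $m:=\inf_{i}\rho_i^0$ and $M:=\sup_{i}\rho_i^0$ do not depend on $\omega$ and the iterates are measurable (Remark~\ref{num-mb}). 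I would argue by induction on $n$: the case $n=0$ is immediate, so assume $m\le\rho_i^n\le M$ for all $i\in\Z$, recalling $0\le m$ and $M\le\rho^{max}$.

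First I would collect the structural facts that power the estimate. From Remark~\ref{rem:ass_W} the weights (\ref{eq:gamma_k}) satisfy $\gamma_k\ge0$, are nonincreasing in $k$, and sum to at most $W_0=1$; together with (\ref{eq:V_gep_grt_0}) this gives $0\le V_{\gep,j}^n\le\|v_\gep\|$, and the CFL condition (\ref{eq:sCFL}) yields $\lambda(\gamma_0\|v'\|\rho^{max}+\|v_\gep\|)\le1$, in particular $1-\lambda V_{\gep,j}^n\ge0$. Writing (\ref{eq:SCHEM_sNV}) as $\rho_j^{n+1}=\rho_j^n(1-\lambda V_{\gep,j}^n)+\lambda\rho_{j-1}^n V_{\gep,j-1}^n$ then shows at once that $\rho_j^{n+1}\ge0$; the sharp two-sided bound, however, needs the finer structure below.

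The heart of the argument is to show that, regarded as a function $H$ of the stencil values $\rho_{j-1}^n,\dots,\rho_{j+N_\eta}^n$, the update maps the box $[m,M]$ into $[m,M]$. I would split the numerical flux difference in (\ref{eq:SCHEM_sNV}) by adding and subtracting $\rho_j^n V_{\gep,j-1}^n$, producing a local transport part $\lambda V_{\gep,j-1}^n(\rho_{j-1}^n-\rho_j^n)$ and a genuinely nonlocal, downstream part $\lambda\rho_j^n(V_{\gep,j-1}^n-V_{\gep,j}^n)$. Using the mean value representation (\ref{eq:MVP_eq})--(\ref{eq:Pi_x_y}) for $v_\gep$ I would write $V_{\gep,j-1}^n-V_{\gep,j}^n=\sum_{k}\gamma_k c_k(\rho_{j+k+1}^n-\rho_{j+k}^n)$ with slopes $c_k\in[0,\|v'\|]$, and bound the accompanying multiplier $\lambda\rho_j^n$ by $\lambda\rho^{max}$. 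The point is that after this rearrangement the diagonal contribution carries the coefficient $1-\lambda V_{\gep,j-1}^n-\lambda\rho_j^n\gamma_0c_0$, which is nonnegative precisely because the CFL bound (\ref{eq:sCFL}) bundles the two effects $\|v_\gep\|$ (local transport) and $\gamma_0\|v'\|\rho^{max}$ (nonlocal correction), while the two boundary contributions are manifestly nonnegative and the weights telescope so that all contributions sum to one. Since each $\rho_i^n\in[m,M]$, this yields $m\le\rho_j^{n+1}\le M$ and closes the induction.

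The main obstacle is exactly the nonlocal term $V_{\gep,j-1}^n-V_{\gep,j}^n$. Unlike a three-point scheme, (\ref{eq:SCHEM_sNV}) is \emph{not} monotone in all its arguments --- one checks that $\partial\rho_j^{n+1}/\partial\rho_{j+k}^n$ can change sign when $\rho_{j-1}^n$ and $\rho_j^n$ are very unequal --- so the bound cannot simply be read off from a monotone numerical flux, and the interior coefficients (where the per-pair mean value slopes vary) must be handled with care; it is here that the monotonicity of the weights $\gamma_k$ and the a priori bound $\rho_i^n\le\rho^{max}$ are indispensable, and this is what makes the full argument technical. A secondary, purely stochastic subtlety is that $v_\gep$ is only differentiable almost everywhere, with the gap at $\rho^\ast=v^{-1}(-\gep)$ for $\gep<0$; this is absorbed harmlessly by the $\Pi$-device of Section~\ref{subsec:ana_v_e} (cf.\ (\ref{eq:Pi_x_y})), so that all velocity estimates remain deterministic and uniform in $\omega$, and the probabilistic content of the lemma reduces to running the deterministic estimate simultaneously for every realization.
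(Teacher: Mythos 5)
Your reduction to a fixed realization, the induction setup, and the structural facts you collect (nonnegativity and monotonicity of the weights $\gamma_k$, the bounds (\ref{eq:v_e_norm_approx_final}) and (\ref{eq:Dv_e_norm_approx_final}), and $1-\lambda V_{\gep,j}^n\geq 0$ from (\ref{eq:sCFL})) are all correct and consistent with the paper. The gap sits exactly at the step you call the heart of the argument. After writing
\begin{align*}
V_{\gep,j-1}^n-V_{\gep,j}^n=\sum_{k=0}^{N_\eta-1}\gamma_k c_k\bigl(\rho_{j+k+1}^n-\rho_{j+k}^n\bigr),
\qquad c_k:=-(v_\gep^{n})'(\xi_k)\,\Pi\bigl(\rho_{j+k+1}^n,\rho_{j+k}^n\bigr)\in\bigl[0,\lVert v'\rVert\bigr],
\end{align*}
and Abel-summing, the coefficient of the interior value $\rho_{j+i}^n$ ($1\leq i\leq N_\eta-1$) in your representation of $\rho_j^{n+1}$ is $\lambda\rho_j^n\bigl(\gamma_{i-1}c_{i-1}-\gamma_i c_i\bigr)$. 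This is \emph{not} sign-definite: the kernel monotonicity $\gamma_{i-1}\geq\gamma_i$ cannot compensate for $c_{i-1}<c_i$, and with the velocity limiter this genuinely occurs --- $(v_\gep^n)'$ vanishes where $v(\rho)+\gep^n<0$, so one can have $c_{i-1}=0$ and $c_i$ close to $\lVert v'\rVert$, making the coefficient strictly negative (even without the limiter, a nonlinear $v$ such as $v(\rho)=1-\rho^2$ produces unequal slopes). Hence the update is \emph{not} a convex combination of the stencil values (the weights do sum to one, but they are not all nonnegative), and the conclusion $m\leq\rho_j^{n+1}\leq M$ cannot be drawn this way. You flag the interior coefficients as ``to be handled with care,'' but no bookkeeping makes them nonnegative; the argument as designed does not close.

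The paper avoids per-pair mean value slopes entirely by exploiting monotonicity \emph{before} differencing. In the exact identity (\ref{eq:max_p_V_eq}) every nonlocal evaluation carries a nonpositive weight, namely $(\gamma_k-\gamma_{k-1})\leq 0$ or $-\gamma_{N_\eta-1}<0$; since $v_\gep^n$ is nonincreasing and, by the induction hypothesis, $\rho_{j+k}^n\leq\sup_{i\in\Z}\rho_i^0$, each such $v_\gep^n(\rho_{j+k}^n)$ may be replaced by $v_\gep^n\bigl(\sup_{i\in\Z}\rho_i^0\bigr)$ without decreasing the expression. Telescoping then leaves the \emph{single} difference $\gamma_0\bigl(v_\gep^n(\rho_j^n)-v_\gep^n(\sup_{i\in\Z}\rho_i^0)\bigr)$, to which the Lipschitz bound is applied once --- one slope, one difference whose sign is controlled by the induction hypothesis --- giving (\ref{eq:V-V_bound}). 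From there the CFL condition yields $\rho_j^{n+1}\leq(1-\Lambda)\rho_j^n+\Lambda\sup_{i\in\Z}\rho_i^0$ with the single coefficient $\Lambda:=\lambda\bigl(\gamma_0\lVert (v_\gep^n)'\rVert\rho^{max}+\lVert v_\gep^n\rVert\bigr)\in[0,1]$, and the lower bound is symmetric. If you replace your per-pair expansion by this value-replacement-then-telescope step, the remainder of your outline (decomposition into local and nonlocal parts, CFL, induction) goes through essentially as in the paper.
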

\begin{proof}
    As in \cite[3.3]{Friedrich2018}, we prove the claim by induction in time, whilst applying our findings on 
    $v_\gep$ and $v_\gep'$ from Section \ref{subsec:ana_v_e}. 
     The adaptation of the proof for the NV model is threefold. 
    First, the discretization $v_\gep^n$ must still be monotone decreasing for any realization of $\gep$, which is given as we have seen before. 
    Second, our analysis of $v_\gep'$ and thus $(v_\gep^n)'$ as in (\ref{eq:MVP_leq})  allows us to invoke a mean value theorem argumentation.
    Last, the adapted CFL condition 
    provides the final inequality. 
    Further note, that the bounds are in fact independent of the realization of the random velocities.   
    The full proof is provided in Section \ref{lem:maxP_app}.
\end{proof}

From the maximum principle we obtain the positivity of solutions as $0 \leq \rho_0(x)$ implies \linebreak
 $0\leq \rho_j^n(\omega)$ for
all realizations of the random variable. Hence, one can show as for the NV model that 
the numerical scheme preserves the $L^1(\R)$ norm.

\begin{corollary}\label{cor:L1_conservation}
    Let $\rho_0(x)$ as in (\ref{eq:CP_BV}) and the usual assumptions
    on $W_\eta$ and $v$ hold. If $\rho_j^n$ is constructed by the scheme (\ref{eq:SCHEM_sNV})
    with the CFL condition (\ref{eq:sCFL}), then
    \begin{align*}
        \Delta x \sum_{j \in \Z} |\rho_j^n (\omega)| = {\left\lVert \rho_0\right\rVert}_{L^1(\R)}, \quad \forall n \in \{0,\dots,N_T\},\; \forall \omega \in \Omega.
    \end{align*}
\end{corollary}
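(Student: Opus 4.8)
The plan is to exploit the conservative form of the scheme (\ref{eq:SCHEM_sNV}) together with the positivity furnished by the discrete maximum principle. First I would invoke Lemma \ref{lem:maxP}: since $\rho_0(x) \geq 0$ forces $\inf_{j \in \Z}\rho_j^0 \geq 0$, the maximum principle yields $\rho_j^n(\omega) \geq 0$ for every $j\in\Z$, $n$ and $\omega$, so the absolute values disappear, $|\rho_j^n(\omega)| = \rho_j^n(\omega)$. It then suffices to show that the nonnegative quantity $\sum_{j \in \Z}\rho_j^n(\omega)$ is independent of $n$.

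Next I would recast the update as a manifestly nonnegative combination. Using $W_0=1$ and the bound $v_\gep^n \leq \lVert v_\gep\rVert(\omega)$, one gets $V_{\gep,j}^n = \sum_{k=0}^{N_\eta-1}\gamma_k v_\gep^n(\rho^n_{j+k+1}) \leq \lVert v_\gep\rVert(\omega)$, so the CFL condition (\ref{eq:sCFL}) gives $\lambda V_{\gep,j}^n \leq \lambda\lVert v_\gep\rVert(\omega) \leq 1$, i.e. $1-\lambda V_{\gep,j}^n \geq 0$. Hence (\ref{eq:SCHEM_sNV}) can be written as
\[
    \rho_j^{n+1} = \bigl(1-\lambda V_{\gep,j}^n\bigr)\rho_j^n + \lambda V_{\gep,j-1}^n\,\rho_{j-1}^n,
\]
a combination of $\rho_j^n$ and $\rho_{j-1}^n$ with nonnegative coefficients.

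Then I would sum over $j \in \Z$. Because every summand is nonnegative, the (a priori infinite) series may be rearranged freely; reindexing the second sum via $j\mapsto j+1$ turns $\sum_j \lambda V_{\gep,j-1}^n\rho_{j-1}^n$ into $\sum_j \lambda V_{\gep,j}^n\rho_j^n$, so the two $\lambda$-terms cancel and
\[
    \sum_{j \in \Z}\rho_j^{n+1}(\omega) = \sum_{j \in \Z}\rho_j^n(\omega).
\]
In particular this sum remains finite at every step, as it equals the finite initial sum, and an induction on $n$ gives $\sum_{j}\rho_j^n(\omega) = \sum_j \rho_j^0$ for all $n$ and $\omega$. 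Finally I would close the loop through the cell-average definition of the initial data: by positivity of $\rho_0$, $\Delta x\sum_j \rho_j^0 = \sum_j \int_{x_{j-1/2}}^{x_{j+1/2}}\rho_0\,dx = \int_\R \rho_0\,dx = \lVert \rho_0\rVert_{L^1(\R)}$, and multiplying the conserved discrete sum by $\Delta x$ yields the claim.

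The only genuinely delicate point is the convergence and rearrangement of the series over $j \in \Z$, since $\rho_0$ need not have compact support. This is precisely what the positivity from Lemma \ref{lem:maxP} resolves: it lets me treat the update as a nonnegative combination and manipulate the sum as a series of nonnegative terms whose total is pinned to the finite value $\lVert\rho_0\rVert_{L^1(\R)}$, so no boundary terms at $\pm\infty$ need to be estimated separately.
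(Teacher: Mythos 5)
Your proposal is correct and follows essentially the same route as the paper: positivity of $\rho_j^n$ via the discrete maximum principle (Lemma \ref{lem:maxP}), the conservative (telescoping) structure of the scheme \eqref{eq:SCHEM_sNV}, and induction on $n$, which is exactly the argument the paper sketches and delegates to the cited reference. Your version merely supplies the details the paper omits, in particular the justification that the CFL condition \eqref{eq:sCFL} makes the update a nonnegative combination, so the infinite series over $j \in \Z$ may be split and reindexed without boundary terms.
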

\begin{proof}
    Due to the fact that we only perturb the velocities but not the quantities, the conservation of mass still applies
    in every time step. As we have further proven the maximum principle,
    the claim can now be easily shown by induction, as in \cite[2.19]{Friedrich2021_diss}, where only changes in notation apply.    
\end{proof}

Next, we show that the densities obtained by the scheme are of bounded total variation. 
To do this, we backtrack from $\rho_j^n$ to the initial densities 
$\rho_j^0$, which are of bounded variation by assumption, mainly combining the proofs \cite[2.20, 2.22]{Friedrich2021_diss} and 
\cite[3.4, 3.5]{Friedrich2018}. 
Note, that the naive construction of $\gep$ as done in (\ref{noise-naive})
does not only raise issues with respect to the measurability of the realizations, but also can not lead to bounded variation of the densities, as $\gep \notin \text{BV}((0,T])$ in that case.
In the following we derive $\gep$-dependent as well as additional $\gep$-independent bounds, to decouple the influence of
the stochastic velocities from the BV estimates. 

\begin{lemma}[BV estimate in space]\label{lem:BV_space}\text{ }\\
    Let $\rho_0(x)$ as in (\ref{eq:CP_BV}) and $\gep(t,\omega)$ as in (\ref{eq:construction_eps}). 
    Further, let the usual assumptions on $W_\eta$ and $v$ hold.
    If $\rho^{\Delta x}$, resp. $\rho_j^n$, is constructed by (\ref{eq:SCHEM_sNV})
    with the CFL condition (\ref{eq:sCFL}), then for all $T>0$ we have the $\gep$-dependent discrete space BV
    estimate:
    \begin{align*}
        \text{TV}(\rho^{\Delta x}(T,\cdot);\R)
        \leq \exp \bigl(T C \bigr) \text{TV}(\rho_0;\R)
    \end{align*}
    with $C(W_\eta,v,p^{max},\gep)=W_\eta(0) (\lVert {v_\gep} \rVert +\lVert {v_\gep}' \rVert )  \lVert\rho\rVert$.
\end{lemma}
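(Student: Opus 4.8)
The plan is to prove the one-step growth estimate $\text{TV}^{n+1}\le(1+C\,\Delta t)\,\text{TV}^n$, where I abbreviate $\text{TV}^n:=\sum_{j\in\Z}|\rho_{j+1}^n-\rho_j^n|$, and then to iterate it. Since $N_T=\lfloor T/\Delta t\rfloor$, iterating over $n=0,\dots,N_T-1$ and using $(1+C\Delta t)^{N_T}\le\exp(C\,N_T\Delta t)\le\exp(CT)$ together with the fact that cell averaging does not increase total variation, $\text{TV}^0=\text{TV}(\rho^{\Delta x}(0,\cdot))\le\text{TV}(\rho_0)$, yields the claim. All estimates are carried out for a fixed realization $\omega$, so that $C$ may depend on $\gep$ through $\lVert v_\gep\rVert(\omega)$ and $\lVert v_\gep'\rVert(\omega)$.

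First I would difference the scheme \eqref{eq:SCHEM_sNV} in space. Writing $G_j:=\rho_j^nV_{\gep,j}^n$ for the numerical flux, one has $\rho_{j+1}^{n+1}-\rho_j^{n+1}=(\rho_{j+1}^n-\rho_j^n)-\lambda(G_{j+1}-2G_j+G_{j-1})$. Applying the discrete product rule $G_{j+1}-G_j=V_{\gep,j}^n(\rho_{j+1}^n-\rho_j^n)+\rho_{j+1}^n(V_{\gep,j+1}^n-V_{\gep,j}^n)$ (and likewise for $G_j-G_{j-1}$) brings this into the form
\[
\rho_{j+1}^{n+1}-\rho_j^{n+1}=(1-\lambda V_{\gep,j}^n)(\rho_{j+1}^n-\rho_j^n)+\lambda V_{\gep,j-1}^n(\rho_j^n-\rho_{j-1}^n)-\lambda\bigl[\rho_{j+1}^n(V_{\gep,j+1}^n-V_{\gep,j}^n)-\rho_j^n(V_{\gep,j}^n-V_{\gep,j-1}^n)\bigr].
\]
The first two summands are the transport part. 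The CFL condition \eqref{eq:sCFL} together with $0\le V_{\gep,j}^n\le\lVert v_\gep\rVert(\omega)$ gives $0\le\lambda V_{\gep,j}^n\le1$ and $\lambda V_{\gep,j-1}^n\ge0$, so after taking absolute values these two terms may be summed over $j$ with non-negative coefficients; a shift of the index in the second sum then yields exactly $\text{TV}^n$.

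The heart of the proof is the remaining nonlocal source term, for which I would establish $\lambda\sum_j|\rho_{j+1}^n(V_{\gep,j+1}^n-V_{\gep,j}^n)-\rho_j^n(V_{\gep,j}^n-V_{\gep,j-1}^n)|\le C\,\Delta t\,\text{TV}^n$. Setting $b_j:=V_{\gep,j}^n-V_{\gep,j-1}^n$, the summand is $|\rho_{j+1}^nb_{j+1}-\rho_j^nb_j|$, i.e. the total variation of the sequence $(\rho_j^nb_j)_j$, which I would split by the product rule into $\lVert\rho\rVert\sum_j|b_{j+1}-b_j|$ and $(\sup_j|b_j|)\,\text{TV}^n$, the bound $\rho_j^n\le\lVert\rho\rVert$ being supplied by the maximum principle (Lemma~\ref{lem:maxP}). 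Both pieces are then estimated by summation by parts in the kernel index, using that the weights are non-negative and decreasing, $\gamma_{k-1}\ge\gamma_k\ge0$ (a consequence of $W_\eta'\le0$ in Remark~\ref{rem:ass_W}). For $b_j=\sum_{k=0}^{N_\eta-1}\gamma_k\bigl(v_\gep^n(\rho_{j+k+1}^n)-v_\gep^n(\rho_{j+k}^n)\bigr)$ this collapses, using $0\le v_\gep^n\le\lVert v_\gep\rVert(\omega)$, to $\sup_j|b_j|\le2\gamma_0\lVert v_\gep\rVert(\omega)$, while for the second differences $b_{j+1}-b_j$ the same manipulation combined with the mean value representation \eqref{eq:MVP_eq}, $|v_\gep^n(\rho_{j+k+1}^n)-v_\gep^n(\rho_{j+k}^n)|\le\lVert v_\gep'\rVert(\omega)\,|\rho_{j+k+1}^n-\rho_{j+k}^n|$, yields $\sum_j|b_{j+1}-b_j|\le2\gamma_0\lVert v_\gep'\rVert(\omega)\,\text{TV}^n$. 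The decisive observation is $\gamma_0=\int_0^{\Delta x}W_\eta(x)\,dx\le W_\eta(0)\,\Delta x$: this factor $\Delta x$ cancels the $1/\Delta x$ inside $\lambda=\Delta t/\Delta x$, so that collecting the two contributions and using $\lambda\Delta x=\Delta t$ produces a one-step factor of the stated form $1+C\,\Delta t$ with $C=W_\eta(0)\bigl(\lVert v_\gep\rVert+\lVert v_\gep'\rVert\bigr)\lVert\rho\rVert$ (harmless numerical constants being absorbed via the maximum-principle bound $\rho_j^n\le\lVert\rho\rVert$).

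I expect the scaling in this last step to be the main obstacle. Bounding $|V_{\gep,j+1}^n-V_{\gep,j}^n|$ naively by $\lVert v_\gep'\rVert$ times a complete sum of density differences would, after multiplication by $\lambda$, leave a spurious $1/\Delta x$ and ruin the estimate; the remedy is precisely the summation-by-parts step exploiting monotonicity of the kernel, which exposes that shifting the averaging window by one cell perturbs $V_\gep$ only by $O(\gamma_0)=O(\Delta x)$. A second, more technical point, already prepared in Section~\ref{subsec:ana_v_e}, is that $v_\gep$ is only Lipschitz at the kink $\rho^*=v^{-1}(-\gep)$, so the mean value arguments must be run through the almost-everywhere derivative of Definition~\ref{def:def_v'} and the representation \eqref{eq:MVP_eq} rather than a classical Taylor expansion. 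Finally, the uniform bounds $\lVert v_\gep\rVert(\omega)\le v^{max}+\tau$ and $\lVert v_\gep'\rVert(\omega)\le\lVert v'\rVert$ from \eqref{eq:v_e_norm_approx_final} and \eqref{eq:Dv_e_norm_approx_final} show that, if desired, $C$ may afterwards be replaced by a fully deterministic bound independent of $\omega$.
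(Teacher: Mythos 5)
Your proposal has the same skeleton as the paper's own argument: difference the scheme (\ref{eq:SCHEM_sNV}), separate transport from the nonlocal contribution by a discrete product rule, reduce differences of $V_\gep$ by Abel summation using $\gamma_{k-1}\geq\gamma_k\geq 0$, replace differences of $v_\gep^n$ by density differences via the mean value representation (\ref{eq:MVP_eq}) and the almost-everywhere derivative of Definition \ref{def:def_v'}, exploit $\gamma_0\leq W_\eta(0)\Delta x$ to cancel the $1/\Delta x$ hidden in $\lambda$, and iterate the one-step estimate. Writing $\text{TV}^n:=\sum_{j\in\Z}|\rho_{j+1}^n-\rho_j^n|$ and $b_j:=V_{\gep,j}^n-V_{\gep,j-1}^n$ as you do, all identities and intermediate bounds you display are correct: the differenced scheme, the fact that the transport part sums exactly to $\text{TV}^n$ under the CFL condition (\ref{eq:sCFL}), and the two kernel bounds $\sup_j|b_j|\leq 2\gamma_0\lVert v_\gep\rVert$ and $\sum_{j\in\Z}|b_{j+1}-b_j|\leq 2\gamma_0\lVert v_\gep'\rVert\,\text{TV}^n$.

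The gap is your closing claim that the resulting constant is of the stated form with ``harmless numerical constants being absorbed''. Collecting your estimates honestly, the one-step factor is
\begin{align*}
1+2\lambda\gamma_0\bigl(\lVert v_\gep\rVert+\lVert v_\gep'\rVert\,\lVert\rho\rVert\bigr)\;\leq\;1+2\,\Delta t\,W_\eta(0)\bigl(\lVert v_\gep\rVert+\lVert v_\gep'\rVert\,\lVert\rho\rVert\bigr),
\end{align*}
so you prove the lemma with $C_{\mathrm{prop}}=2W_\eta(0)\bigl(\lVert v_\gep\rVert+\lVert v_\gep'\rVert\lVert\rho\rVert\bigr)$ rather than with the stated $C=W_\eta(0)\bigl(\lVert v_\gep\rVert+\lVert v_\gep'\rVert\bigr)\lVert\rho\rVert$. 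Neither deviation is absorbable: a factor $2$ has nowhere to go, and $\lVert v_\gep\rVert\leq\lVert v_\gep\rVert\lVert\rho\rVert$ is false whenever $\lVert\rho\rVert<1$ (the paper even normalizes $\rho^{max}=1$), so the inequality you would need points the wrong way. The extra term is created exactly by your second product-rule split $\rho_{j+1}^nb_{j+1}-\rho_j^nb_j=\rho_{j+1}^n(b_{j+1}-b_j)+b_j(\rho_{j+1}^n-\rho_j^n)$ followed by the triangle inequality: the piece $\sup_j|b_j|\cdot\text{TV}^n$ can only be bounded by $O(\gamma_0)\lVert v_\gep\rVert\,\text{TV}^n$, with no factor $\lVert\rho\rVert$, and it has no counterpart in the paper's proof. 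The paper avoids it by choosing the opposite product rule, which attaches $V_{\gep,j+1}^n$ (not $V_{\gep,j}^n$) to $\rho_{j+1}^n-\rho_j^n$ and isolates the single term $\rho_j^n\bigl(V_{\gep,j+1}^n-2V_{\gep,j}^n+V_{\gep,j-1}^n\bigr)$; under (\ref{eq:sCFL}) and the kernel monotonicity all coefficients are then nonnegative, so taking absolute values is lossless, and the difference $V_{\gep,j}^n-V_{\gep,j+1}^n$ surviving in the transport coefficient is controlled by the one-sided estimate (\ref{eq:V-V_bound}), which does carry the factor $\lVert\rho\rVert$. In short: your argument is a correct proof of the lemma with $C$ replaced by the larger $C_{\mathrm{prop}}$ --- still uniform in the mesh, $\gep$-dependent only through $\lVert v_\gep\rVert,\lVert v_\gep'\rVert$, and fully sufficient for Lemma \ref{lem:BV_space_time}, Helly's theorem and Theorem \ref{thm:ex_uniq_sNV_extended} --- but to obtain the constant as stated you must keep the sign structure together as the paper does instead of splitting by the triangle inequality. (In fairness, the stated $C$ is itself the result of loose bookkeeping: the paper's own computation at (\ref{eq:BV_1}) honestly yields $1+2\lambda\gamma_0\lVert(v_\gep^n)'\rVert\lVert\rho\rVert$, compare Remark \ref{rem:BV_space_indep_e}.)
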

\begin{proof}
For the usage of approach \cite[Thm. 3.4]{Friedrich2018} the crucial prerequisites, any stochastic velocity need to fulfill, is the existence of an at least almost everywhere existing, non increasing derivative. 
Further, in order to derive mean-value-theorem-based equalities, the Lipschitz continuity must hold and the zero-sets on which a derivative might not exist needs to be excluded.
For our specific velocity function the relevant prerequisites are defined and discussed in Section \ref{subsec:ana_v_e}.
The full proof is provided in Appendix as Section \ref{lem:BV_app}.
\end{proof}

\begin{remark} \label{rem:BV_space_indep_e} 
    Independently to the above, we can derive an $\gep$-independent bound as it holds
    \begin{align*}
        \text{TV}(\rho^{\Delta x}(T,\cdot);R) 
        &\leq \exp \Bigl(T \underbrace{2 W_\eta(0) \lVert v' \rVert  \lVert\rho\rVert}_{
            =:\, \tilde{C}(W_\eta,v,\rho^{max})}\Bigr) \text{TV}(\rho_0;\R).
    \end{align*}
    For details on the derivation of this bound, see the extended Remark \ref{rem:BV_space_indep_e_app} of Section \ref{lem:BV_app}.
\end{remark}
Given Lemma \ref{lem:BV_space} or optionally Remark \ref{rem:BV_space_indep_e}, we now present the
TV bound in space and time.

\pagebreak 

\begin{lemma}[BV estimate in space and time]\label{lem:BV_space_time}\text{ }\\
    Let $\rho_0(x)$ as in  (\ref{eq:CP_BV}) and $\gep(t,\omega)$ as in (\ref{eq:construction_eps}). 
    Further, let the usual assumptions on $W_\eta$ and $v$ hold.
    If $\rho^{\Delta x}$, resp. $\rho_j^n$, is constructed by (\ref{eq:SCHEM_sNV})
    with the CFL condition (\ref{eq:sCFL}), then for all $T>0$ we have the $\gep$-dependent 
    discrete space and time BV estimate:
    \begin{align*}
        \text{TV}(\rho^{\Delta x};\R \times [0,T]) \leq T \exp \bigl(T C \bigr) 
                \Bigl( 1+W_\eta(0) \lVert v' \rVert \lVert \rho \rVert + \lVert v_\gep \rVert\Bigr)
                \text{TV}(\rho_0;\R),
    \end{align*}
    with $C(W_\eta,v,p^{max},\gep)$ as in Lemma \ref{lem:BV_space} or $\tilde{C}(W_\eta,v,\rho^{max})$ as in Remark \ref{rem:BV_space_indep_e}.
\end{lemma}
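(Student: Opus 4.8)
The plan is to reduce the space–time total variation to two pieces, each of which is controlled by the spatial $\text{BV}$ estimate of Lemma \ref{lem:BV_space}. For the piecewise constant reconstruction $\rho^{\Delta x}$ on the space–time grid, the total variation over $\R\times[0,T]$ decomposes (in the usual sense, cf. \cite{Holden2015}) into a purely spatial and a purely temporal contribution,
\begin{align*}
\text{TV}(\rho^{\Delta x};\R\times[0,T]) = \sum_{n=0}^{N_T-1}\Delta t\,\text{TV}(\rho^{\Delta x}(t^n,\cdot);\R) + \sum_{n=0}^{N_T-1}\Delta x \sum_{j\in\Z}\bigl|\rho_j^{n+1}-\rho_j^n\bigr|.
\end{align*}
The first sum is immediate: by Lemma \ref{lem:BV_space} each spatial total variation is bounded by $\exp(TC)\,\text{TV}(\rho_0;\R)$, and since $N_T\Delta t\le T$, this contribution is at most $T\exp(TC)\,\text{TV}(\rho_0;\R)$, which accounts for the leading $1$ in the prefactor.

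First I would rewrite the temporal jumps through the scheme. Inserting the update (\ref{eq:SCHEM_sNV}) gives $\Delta x\,|\rho_j^{n+1}-\rho_j^n| = \Delta t\,|\rho_j^n V_{\gep,j}^n - \rho_{j-1}^n V_{\gep,j-1}^n|$, so the temporal contribution equals $\sum_n \Delta t$ times the spatial total variation of the numerical flux $F^n_j := \rho_j^n V_{\gep,j}^n$. The heart of the argument is therefore to bound $\sum_j |F^n_j - F^n_{j-1}|$ by a constant multiple of $\text{TV}(\rho^{\Delta x}(t^n,\cdot);\R)$. To this end I would use the discrete product splitting
\begin{align*}
\rho_j^n V_{\gep,j}^n - \rho_{j-1}^n V_{\gep,j-1}^n = (\rho_j^n-\rho_{j-1}^n)V_{\gep,j}^n + \rho_{j-1}^n(V_{\gep,j}^n - V_{\gep,j-1}^n)
\end{align*}
and estimate the two terms separately. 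The first is controlled by $\lVert v_\gep\rVert$ using $0\le V_{\gep,j}^n\le\lVert v_\gep\rVert$ (since $\sum_k\gamma_k\le W_0=1$). For the second, the maximum principle of Lemma \ref{lem:maxP} gives $\rho_{j-1}^n\le\lVert\rho\rVert$, while the convolution difference $V_{\gep,j}^n-V_{\gep,j-1}^n=\sum_k\gamma_k\bigl(v_\gep^n(\rho^n_{j+k+1})-v_\gep^n(\rho^n_{j+k})\bigr)$ is handled exactly as in Lemma \ref{lem:BV_space}: the mean–value representation (\ref{eq:MVP_eq}) together with the a.e. derivative bound $\lVert v'_\gep\rVert\le\lVert v'\rVert$ from (\ref{eq:Dv_e_norm_approx_final}) and the kernel monotonicity reduce $\sum_j|V_{\gep,j}^n-V_{\gep,j-1}^n|$ to a multiple of $\text{TV}(\rho^{\Delta x}(t^n,\cdot))$ with a constant of the form $W_\eta(0)\lVert v'\rVert$. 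Combining the two terms yields $\sum_j|F^n_j-F^n_{j-1}| \le \bigl(\lVert v_\gep\rVert + W_\eta(0)\lVert v'\rVert\lVert\rho\rVert\bigr)\text{TV}(\rho^{\Delta x}(t^n,\cdot))$.

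To finish, I would feed the per–time–level spatial bound of Lemma \ref{lem:BV_space} into this flux estimate and sum over $n$: using $\text{TV}(\rho^{\Delta x}(t^n,\cdot))\le\exp(TC)\text{TV}(\rho_0;\R)$ and $N_T\Delta t\le T$ produces a temporal contribution of at most $T\exp(TC)\bigl(W_\eta(0)\lVert v'\rVert\lVert\rho\rVert+\lVert v_\gep\rVert\bigr)\text{TV}(\rho_0;\R)$. Adding the spatial contribution gives the claimed bound, and replacing $C$ by $\tilde C$ of Remark \ref{rem:BV_space_indep_e} gives the $\gep$–independent variant. Every estimate is pathwise in $\omega$, since $\lVert v_\gep\rVert$ and $\lVert v'_\gep\rVert$ are bounded deterministically by $v^{max}+\tau$ and $\lVert v'\rVert$, respectively. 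The main obstacle is the middle step — reducing the spatial variation of the nonlocal flux $F^n$ to that of $\rho^n$ — because the convolution couples each cell to its whole downstream stencil, so the kernel weights must be resummed carefully; this is precisely the mechanism that introduces the $W_\eta(0)$ factor and reuses the machinery of Lemma \ref{lem:BV_space}. The remaining temporal and exponential factors then follow by routine summation over the time levels.
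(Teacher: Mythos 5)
Your proposal is correct and takes essentially the same route as the paper's proof: the same decomposition of $\text{TV}(\rho^{\Delta x};\R\times[0,T])$ into a spatial part (controlled directly by Lemma \ref{lem:BV_space}) and a temporal part, the same splitting $\rho_j^n V_{\gep,j}^n-\rho_{j-1}^n V_{\gep,j-1}^n=(\rho_j^n-\rho_{j-1}^n)V_{\gep,j}^n+\rho_{j-1}^n\bigl(V_{\gep,j}^n-V_{\gep,j-1}^n\bigr)$, and the same use of the mean-value representation with $\lVert v_\gep'\rVert\leq\lVert v'\rVert$ and $\lvert V_{\gep,j}^n\rvert\leq\lVert v_\gep^n\rVert$ before summing over time levels. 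The only cosmetic difference is that the paper also treats the degenerate case $T\leq\Delta t$ and the partial final time slab of length $T-N_T\Delta t$ explicitly, which your sketch subsumes in the bound $N_T\Delta t\leq T$.
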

\begin{proof}
    We once again rely on the redefined derivative for the possibly only pointwise existing derivative of $v_\gep$. 
    For the estimate it is crucial that for any stochastic velocity function, neither the derivative nor its numerical discretization surpasses the absolute value of $v'$. 
    The same must hold true for the convolution itself, i.e. $|V_{\gep,j}^n|\leq \lVert v_\gep^n \rVert$. 
    We present the full proof in Section \ref{lem:BV_app}.
\end{proof}

With the above, we have everything at hand to show the existence of some convergent subsequence of $\rho^{\Delta x}$
by Helly's Theorem (\cite[5.6]{Eymard2000},\cite[2.19]{Friedrich2021_diss}). 
Yet, it is left to show that this limit is the weak entropy solution. 
We introduce the notations $a\wedge b := \max\{a,b\}$, $a \vee b := \min\{a,b\}$ and show that the stochastic numerical densities $\rho_j^n$ satisfy a discrete entropy inequality.

\begin{lemma}[Discrete entropy inequality]\label{lem:discrete_enropy_ineq}\text{ }\\
    Let $\gep(t,\omega)$ as in (\ref{eq:construction_eps}).
Further, let the usual assumptions on $W_\eta$ and $v$ hold.
If $\rho^{\Delta x}$, resp. $\rho_j^n$, is constructed by (\ref{eq:SCHEM_sNV})
with the CFL condition (\ref{eq:sCFL}), then,
for any $c \in \R$ and for all $n \in \{0,\dots,N_T\},\, j\in \Z$, the following discrete entropy inequality holds true
    \begin{align}\label{eq:discrete_enrtropy_ineq}
        |\rho_j^{n+1}-c| \leq |\rho_j^n-c| &- \lambda \left( H_{j+1/2}^n(\rho_j^n) - H_{j-1/2}^n(\rho_{j-1}^n)  \right) \nonumber\\
                                        &-\lambda \text{\emph{ sign}}(\rho_j^{n+1}-c) \left( F_{j+1/2}^n(c)-F_{j-1/2}^n(c) \right), 
    \end{align}
where 
    \begin{align*}
        H_{j+1/2}^n(u):= F_{j+1/2}^n(u \wedge c)-F_{j+1/2}^n(u \vee c).
    \end{align*}
\end{lemma}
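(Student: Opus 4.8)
The plan is to adapt the classical Crandall--Majda cell-entropy argument to the nonlocal scheme, exploiting that the numerical flux $F_{j+1/2}^n(u)=u\,V_{\gep,j}^n$ from \eqref{eq:num_flux} is \emph{linear} in its local argument once the nonlocal velocity coefficients $V_{\gep,j}^n$ are frozen at the values produced by the genuine iterate $\rho^n$. Concretely, I would introduce the two-point update map
\begin{align*}
\mathcal{H}_j(a,b) := b-\lambda\bigl(b\,V_{\gep,j}^n-a\,V_{\gep,j-1}^n\bigr)=(1-\lambda V_{\gep,j}^n)\,b+\lambda V_{\gep,j-1}^n\,a,
\end{align*}
so that $\mathcal{H}_j(\rho_{j-1}^n,\rho_j^n)=\rho_j^{n+1}$ by \eqref{eq:SCHEM_sNV}, where for each fixed $\omega$ the numbers $V_{\gep,j}^n,V_{\gep,j-1}^n\ge0$ are treated as fixed non-negative coefficients. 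The whole argument is then pathwise, and the stochasticity enters only through these frozen coefficients and the deterministic bounds of Section~\ref{subsec:ana_v_e}. I would first record that $\mathcal H_j$ is monotone: $\partial_a\mathcal H_j=\lambda V_{\gep,j-1}^n\ge0$ by \eqref{eq:V_gep_grt_0}, while $\partial_b\mathcal H_j=1-\lambda V_{\gep,j}^n\ge0$ because the convolution weights sum to at most $W_0=1$, so $V_{\gep,j}^n\le\lVert v_\gep\rVert(\omega)$ by \eqref{eq:v_e_norm_approx_final}, and the CFL condition \eqref{eq:sCFL} then forces $\lambda V_{\gep,j}^n\le1$.

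With monotonicity in hand I would run the max/min comparison. Using the paper's convention $a\wedge b=\max\{a,b\}$, $a\vee b=\min\{a,b\}$, the entries $\rho_{j-1}^n\wedge c,\ \rho_j^n\wedge c$ dominate both $(\rho_{j-1}^n,\rho_j^n)$ and $(c,c)$ componentwise, so monotonicity gives $\mathcal H_j(\rho_{j-1}^n\wedge c,\rho_j^n\wedge c)\ge\max\{\rho_j^{n+1},\mathcal H_j(c,c)\}$, and symmetrically $\mathcal H_j(\rho_{j-1}^n\vee c,\rho_j^n\vee c)\le\min\{\rho_j^{n+1},\mathcal H_j(c,c)\}$. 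Subtracting yields
\begin{align*}
\mathcal H_j(\rho_{j-1}^n\wedge c,\rho_j^n\wedge c)-\mathcal H_j(\rho_{j-1}^n\vee c,\rho_j^n\vee c)\ \ge\ \bigl|\rho_j^{n+1}-\mathcal H_j(c,c)\bigr|.
\end{align*}
By linearity of $\mathcal H_j$ together with $u\wedge c-u\vee c=|u-c|$ and $H_{j+1/2}^n(u)=F_{j+1/2}^n(u\wedge c)-F_{j+1/2}^n(u\vee c)=V_{\gep,j}^n|u-c|$, the left-hand side equals exactly $|\rho_j^n-c|-\lambda\bigl(H_{j+1/2}^n(\rho_j^n)-H_{j-1/2}^n(\rho_{j-1}^n)\bigr)$, which produces the entropy-flux part of \eqref{eq:discrete_enrtropy_ineq}.

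The final step converts $|\rho_j^{n+1}-\mathcal H_j(c,c)|$ into $|\rho_j^{n+1}-c|$. Writing $\rho_j^{n+1}-c=(\rho_j^{n+1}-\mathcal H_j(c,c))+(\mathcal H_j(c,c)-c)$, multiplying by $\mathrm{sign}(\rho_j^{n+1}-c)$, and bounding $\mathrm{sign}(\rho_j^{n+1}-c)\,(\rho_j^{n+1}-\mathcal H_j(c,c))\le|\rho_j^{n+1}-\mathcal H_j(c,c)|$ gives
\begin{align*}
|\rho_j^{n+1}-c|\ \le\ \bigl|\rho_j^{n+1}-\mathcal H_j(c,c)\bigr|+\mathrm{sign}(\rho_j^{n+1}-c)\bigl(\mathcal H_j(c,c)-c\bigr).
\end{align*}
Since $\mathcal H_j(c,c)-c=-\lambda\bigl(F_{j+1/2}^n(c)-F_{j-1/2}^n(c)\bigr)$, the last term is precisely the source term in \eqref{eq:discrete_enrtropy_ineq}; combining this with the previous display closes the argument. (Lemma~\ref{lem:maxP} is not strictly needed for the inequality itself, but guarantees that all iterates stay admissible so that the CFL bound used above is genuinely in force.)

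The step I expect to be the conceptually delicate point is the nonlocal \emph{non-consistency}: unlike a local monotone scheme, $\mathcal H_j$ does not reproduce constants, i.e. $\mathcal H_j(c,c)\neq c$, because the frozen coefficients $V_{\gep,j}^n$ and $V_{\gep,j-1}^n$ differ across neighbouring interfaces. One must therefore carry the residual $\mathcal H_j(c,c)-c$ explicitly and recognise it as the discrete analogue of the term $-\mathrm{sign}(\rho-c)\,\partial_x f(t,x,c)$ appearing in Definition~\ref{def:nonlocal_weak_entropy_sol}; this is exactly what dictates the nonstandard shape of the inequality and what the final sign manipulation is engineered to recover.
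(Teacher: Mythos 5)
Your proposal is correct and follows essentially the same route as the paper's proof: your map $\mathcal{H}_j$ is exactly the paper's $G_j^n(u,w)=w-\lambda\bigl(F_{j+1/2}^n(w)-F_{j-1/2}^n(u)\bigr)$, and you establish monotonicity via the CFL condition \eqref{eq:sCFL} and the non-negativity \eqref{eq:V_gep_grt_0}, run the same Crandall--Majda comparison with $\wedge$/$\vee$, and close with the same sign manipulation that produces the nonlocal source term. The only (harmless) differences are presentational: you exploit the linearity of the flux in its local argument to evaluate $H_{j\pm1/2}^n$ explicitly, and you spell out the justification $\lambda V_{\gep,j}^n\le\lambda\lVert v_\gep\rVert(\omega)\le 1$, which the paper leaves implicit.
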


\begin{proof}
    Adapting from  \cite[2.25]{Friedrich2021_diss}, we initially require the non-negativity of $v_\gep$ for all possible $\gep$ values, as ensured by the maximum.
    Further, this proof relies on the spatial differentiability of the numerical flux $F_{j+1/2}^n(\rho)$  at each time step $n$.
    We have ensured this, by constructing our error term $\gep(t^n,\omega)$ constant in the spatial dimension.   
    The detailed  proof can be found in Section \ref{lem:discrete_enropy_ineq_app}.
\end{proof}

It rests to examine the limiting behavior of the derived discrete entropy inequality (\ref{eq:discrete_enrtropy_ineq}).
Given the non-local nature of our scheme, the classical argumentation regarding the numerical limit of $(\Delta t,\Delta x) \rightarrow 0$, via the Lax Wendroff Theorem becomes inadequate. Consequently, employing nonlocal theory, the time integrability of the stochastic flux function becomes crucial in the presented approach.

\begin{lemma}[Convergence to entropy solution]\label{lem:convergence}\text{ }\\
    Fix any $\omega \in \Omega$ and
    assume that $\rho=\rho(t,x)(\omega)$ with $\rho(t,\cdot)(\omega) \in$ BV$(\R)$ 
    for some $t \in [0,T]$ is the $L^1_{loc}$-limit (as in Helly's Theorem e.g. \cite[5.6]{Eymard2000}) of $\rho^{\Delta x}$, constructed by
    (\ref{eq:SCHEM_sNV}), and exists. Let the assumptions on $W_\eta$ (\ref{rem:ass_W}) and
     $v$ (\ref{rem:ass_v_ND}) hold. Then for any $c \in \R$, $\phi \in C_0^1([0,T)\times \R;\R^+)$ 
     $\rho$ is a weak entropy solution in the sense of Def. \ref{def:nonlocal_weak_entropy_sol}.
\end{lemma}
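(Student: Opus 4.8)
The plan is to pass to the limit in the discrete entropy inequality from Lemma \ref{lem:discrete_enropy_ineq}, mirroring the classical Lax--Wendroff strategy but adapted to the nonlocal setting, where the flux $F^n_{j+1/2}(c)$ carries an explicit spatial dependence through the convolution weights. First I would multiply the discrete inequality (\ref{eq:discrete_enrtropy_ineq}) by a test value $\phi^n_j := \phi(t^n, x_j)$, summing over $j \in \Z$ and $n$, and perform a summation-by-parts (discrete Abel summation) in both the spatial and temporal indices. This moves the difference quotients off the flux terms and onto $\phi$, producing discrete analogues of the three integrals appearing in Def.~\ref{def:nonlocal_weak_entropy_sol}: a time-derivative term $|\rho-c|\,\partial_t\phi$, a spatial-flux term $\mathrm{sign}(\rho-c)(f_\gep(t,x,\rho)-f_\gep(t,x,c))\,\partial_x\phi$, and the crucial nonlocal source term $-\mathrm{sign}(\rho-c)\,\partial_x f_\gep(t,x,c)\,\phi$ coming from the $x$-dependence of $F^n_{j+1/2}(c)$.

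Next I would identify the limits of each discrete sum as $(\Delta t, \Delta x)\to 0$. For a fixed $\omega$, the $L^1_{loc}$-convergence $\rho^{\Delta x}\to \rho$ together with the uniform $L^\infty$ bound from the maximum principle (Lemma \ref{lem:maxP}) and the space--time BV bound (Lemma \ref{lem:BV_space_time}) lets me pass the nonlinear terms $|\rho^{\Delta x}-c|$ and $\mathrm{sign}(\rho^{\Delta x}-c)(\cdots)$ to their continuous counterparts via dominated convergence, using continuity of the velocity $v_\gep$ in $\rho$. The numerical convolution $V^n_{\gep,j}$ built from $\gamma_k$ in (\ref{eq:gamma_k}) must be shown to converge to the continuous convolution $V_\gep(t,x)$; here the $C^1$-regularity and monotonicity of the kernel $W_\eta$ (Remark \ref{rem:ass_W}) ensure the Riemann-sum weights $\gamma_k/\Delta x$ approximate $W_\eta$, and the bound $\lVert v_\gep'\rVert(\omega)\le \lVert v'\rVert$ from (\ref{eq:Dv_e_norm_approx_final}) controls the spatial increments uniformly.

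The main obstacle I expect is the correct treatment of the nonlocal source term $-\mathrm{sign}(\rho-c)\,\partial_x f_\gep(t,x,c)\,\phi$. Since $f_\gep(t,x,c)=c\,V_\gep(t,x)$ and $V_\gep$ is differentiable in $x$ with derivative involving $\int_x^{x+\eta} W_\eta'(y-x)v_\gep(\rho(t,y))\,dy$ plus boundary terms from $W_\eta(0)$ and $W_\eta(\eta)$, the discrete difference of the convolution weights across adjacent cells must be shown to converge to this exact derivative. This requires care because $v_\gep(\rho(t,\cdot))$ is only BV in space, not smooth, so I would exploit the mean-value representation (\ref{eq:MVP_eq}) with the factor $\Pi(x,y)\in(0,1]$ to control the increments of $v_\gep$ and invoke the time-integrability of the stochastic flux for the fixed realization. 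A further subtlety is that the $\gep$-dependence is only piecewise constant in time across the grid $t^k=k\delta_R$, so on each time subinterval $v_\gep$ behaves like a deterministic velocity; since $\delta_R\le\Delta t$, the time steps align with these jumps and no additional error is introduced by the discontinuities of $t\mapsto\gep(t,\omega)$.

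Once all discrete sums have been shown to converge to their integral counterparts and the nonnegativity of the discrete inequality is preserved in the limit, the resulting inequality is exactly the weak entropy formulation of Def.~\ref{def:nonlocal_weak_entropy_sol}, including the initial-data term $\int |\rho_0(x)-c|\phi(0,x)\,dx$ which arises from the $n=0$ boundary term of the temporal Abel summation together with the consistency of the cell-average discretization of $\rho_0$. Since the argument is carried out pathwise for fixed $\omega\in\Omega$ and every estimate used is $\gep$-uniform (via the deterministic bounds (\ref{eq:v_e_norm_approx_final}) and (\ref{eq:Dv_e_norm_approx_final})), the conclusion holds for all $\omega$, completing the identification of the limit $\rho$ as a weak entropy solution.
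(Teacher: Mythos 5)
Your skeleton (multiply the discrete entropy inequality (\ref{eq:discrete_enrtropy_ineq}) by $\phi_j^n\Delta x$, sum over $j$ and $n$, sum by parts, and identify the three integrals of Definition \ref{def:nonlocal_weak_entropy_sol}) is indeed the paper's strategy, but the two places where the actual work happens are missing or wrong in your plan. The first gap: the discrete inequality carries $\mathrm{sign}(\rho_j^{n+1}-c)$ at time level $n+1$ against fluxes at level $n$, so after summation by parts the nonlocal source term must be split, by adding and subtracting $\mathrm{sign}(\rho_j^{n}-c)$, into a part that converges to $-\int_0^T\int_\R \mathrm{sign}(\rho-c)\,c\,\partial_x V_\gep\,\phi\,dx\,dt$ and the error term
\begin{align*}
-\Delta x\,\Delta t\sum_{n}\sum_{j}\bigl(\mathrm{sign}(\rho_j^{n+1}-c)-\mathrm{sign}(\rho_j^{n}-c)\bigr)\,c\,\frac{V_{\gep,j}^n-V_{\gep,j-1}^n}{\Delta x}\,\phi_j^n .
\end{align*}
Because $\mathrm{sign}$ is discontinuous and $\rho^{\Delta x}\to\rho$ only in $L^1_{loc}$, no dominated-convergence or continuity argument can handle this term; the paper kills it with a \emph{second} summation by parts in time, which converts the sign differences into time differences $(V_{\gep,j}^{n+1}-V_{\gep,j-1}^{n+1})-(V_{\gep,j}^{n}-V_{\gep,j-1}^{n})$ plus boundary terms that are $\mathcal{O}(\Delta x)$ by (\ref{eq:bound_on_diff_deltax}). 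Your proposal never isolates this term, so the limit passage for the source term is not actually carried out.

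The second gap is your claim that, since $\delta_R\le\Delta t$, the jumps of the noise align with the time steps and ``no additional error is introduced by the discontinuities of $t\mapsto\gep(t,\omega)$.'' This is false, and it conceals the one point where the stochastic proof genuinely departs from the deterministic one. The convergence analysis takes $\Delta t\to 0$ with $\delta_R$ fixed, and the time differences produced by the second summation by parts have the form $v_\gep^{n+1}(\rho_j^{n+1})-v_\gep^{n}(\rho_j^{n})$, in which the noise value changes between the two levels. For exactly this reason the mean value representation (\ref{eq:MVP_eq}), which you propose to invoke here, is inapplicable: it compares $v_\gep$ at two densities for one and the same value of $\gep$. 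The paper instead uses the Lipschitz continuity of $x\mapsto x^+$ to obtain
\begin{align*}
v_\gep^{n+1}(\rho_j^{n+1})-v_\gep^{n}(\rho_j^{n})\le \lVert v_\gep'\rVert\bigl(|\rho_j^{n+1}-\rho_j^n|+|\gep(t^{n+1})-\gep(t^n)|\bigr),
\end{align*}
bounds the summed density increments via the space--time BV estimate (\ref{eq:K_1_gep_T}), and bounds the summed noise increments by $\lVert\gep\rVert_{\mathrm{BV}}\,2R$, which is finite precisely because the construction (\ref{eq:construction_eps}) makes $t\mapsto\gep(t,\omega)$ piecewise constant with finitely many jumps. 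The individual noise increments are $\mathcal{O}(\tau)$, not $o(1)$; only their total variation in time is bounded independently of $\Delta t$, which is what allows the prefactor $(\Delta t/\Delta x)\,\gamma_0\le \lambda\,\Delta x\,W_\eta(0)$ to drive the whole term to zero as $\Delta x\to 0$. Without this step your argument does not close.
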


\begin{proof}
    As we adapt the proof of \cite[2.2.6]{Friedrich2021_diss}, we ensure that the occurring integrals, especially
        \begin{align*}
            \int_{0}^{T} \int_{\R} \text{sign}(\rho-c) 
            \Bigl(f_{{\gep}}(t,x,\rho)-f_{{\gep}}(t,x,c)\Bigr) \,dx  \,dt,
        \end{align*}
    are well posed. As discussed, we achieve this by using the piecewise constant error term $\gep(t,\omega)$, with parameter $\delta_R$.
    Furthermore, we must address the time dependence of $v_\gep^n$, which we solve by additionally bounding the occurring terms in time, leveraging the Lipschitz-continuity and the fact that $\gep$ is of bounded total variation.
   The complete proof is given in Section \ref{lem:convergence_app}.
\end{proof}

\subsection{Existence and Uniqueness of the sNV model} \label{subsec:ex_uniq} 
We now have established all prerequisities for the main theorem on the existence of solutions to the sNV model. 
All our findings, including the yet-to-be-shown uniqueness, are  encapsulated in the central theorem of this contribution. 

\begin{satz}[Existence, uniqueness and properties of (sNV)]\label{thm:ex_uniq_sNV_extended}\text{ }\\
   Fix any $\omega \in \Omega$ and let $\rho_0(x)$, $\gep(t,\omega)$ as in (\ref{eq:CP_BV}), (\ref{eq:construction_eps}).
     Further, let the usual assumptions on $W_\eta$ and $v$  hold. Then, for 
     any $T>0$ a weak entropy solution $\rho(t,x)(\omega)$, in the sense of Def. \ref{def:nonlocal_weak_entropy_sol}, to 
     the Cauchy Problem of (\ref{eq:sNV}), i.e.
     \begin{align*}
        \begin{cases}
			\partial_t \rho(t,x) + \partial_x f_{\gep}(t,x,\rho(t,x))=0, & (t,x) \in (0,T] \times \mathbb{R},  \\ 
			\rho(0,x) =\rho_{0}, & x \in \mathbb{R},
		\end{cases} 	
     \end{align*}
     with $f_{\gep}(t,x,\rho)=\rho \bigl(W_\eta * v_\gep(\rho,\cdot)\bigr)(t,x)$, exists and is unique.\\
    Further, it holds for all $\omega \in \Omega$:
    \begin{enumerate}
        \item Maximum principle: 
        \begin{align*} 0 \leq \inf_{\R} \rho_0(x) \leq \rho(t,x)(\omega) \leq \sup_{\R} \rho_0(x) \leq \rho^{max} \quad \forall\, t \in [0,T]. \end{align*}
        \item $L^1$-conservation: 
        \begin{align*} {\lVert (\rho(t,\cdot)) (\omega)\rVert}_{L^1(\R)}={\lVert \rho_0 \rVert}_{L^1(\R)}\quad \forall\, t \in [0,T]. \end{align*}
        \item TV bounds: 
        \begin{align*}
            \text{TV}(\rho(T,\cdot);\R) &\leq \exp \Bigl(T C(W_\eta,v,\rho^{max},\gep) \Bigr) \text{TV}(\rho_0;\R), \quad T>0\\
            \text{TV}(\rho(T,\cdot);\R) &\leq \exp \Bigl(T \tilde{C}(W_\eta,v,\rho^{max}) \Bigr) \text{TV}(\rho_0;\R), \quad T>0\\
            \text{TV}(\rho;\R \times [0,T]) &\leq T \exp \bigl(T C \bigr) 
                \Bigl( 1+W_\eta(0) \lVert v' \rVert \lVert \rho \rVert + \lVert v_\gep \rVert\Bigr) 
                \text{TV}(\rho_0;\R) , \quad T>0.
        \end{align*}
    \end{enumerate}
\end{satz}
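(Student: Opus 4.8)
The plan is to argue \emph{pathwise}: I fix $\omega \in \Omega$ once and for all, so that $t \mapsto \gep(t,\omega)$ becomes a deterministic piecewise-constant function which, by the construction \eqref{eq:construction_eps}, is bounded and of bounded variation on $[0,T]$. With $\omega$ frozen, the Cauchy problem for \eqref{eq:sNV} is a deterministic nonlocal conservation law whose flux $f_\gep(t,x,\rho)$ carries only a fixed additional $t$-dependence through $\gep(\cdot,\omega)$. Hence the whole strategy reduces to transferring the deterministic theory of the NV model to this slightly more general flux, using the analysis of $v_\gep$ and $v_\gep'$ from Section \ref{subsec:ana_v_e} to control the places where the limiter is active.

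For existence I would proceed by compactness. First I construct the approximate densities $\rho^{\Delta x}$ via the scheme \eqref{eq:SCHEM_sNV} under the CFL condition \eqref{eq:sCFL}. Lemma \ref{lem:maxP} gives the uniform (in $\Delta x$ and in $\omega$) $L^\infty$ bound through the discrete maximum principle, Corollary \ref{cor:L1_conservation} yields the $L^1$ bound, and Lemmas \ref{lem:BV_space} and \ref{lem:BV_space_time} provide the spatial and space-time total-variation bounds; these are finite precisely because $\gep(\cdot,\omega) \in \mathrm{BV}((0,T])$, which is exactly what motivated the discrete-in-time construction. Together these bounds place $\{\rho^{\Delta x}\}$ in a compact set, so Helly's theorem (\cite[5.6]{Eymard2000}) furnishes a subsequence converging in $L^1_{loc}$ to some $\rho(\cdot,\cdot)(\omega)$ with $\rho(t,\cdot)(\omega) \in \mathrm{BV}(\R)$. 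Lemma \ref{lem:convergence} then identifies this limit as a weak entropy solution in the sense of Definition \ref{def:nonlocal_weak_entropy_sol}. The three listed properties follow by passing to the limit: the maximum principle and the $L^1$-conservation are stable under $L^1_{loc}$ convergence, while the TV bounds are inherited by lower semicontinuity of the total variation, yielding the stated estimates with constants $C$ and $\tilde C$ from Lemma \ref{lem:BV_space} and Remark \ref{rem:BV_space_indep_e}.

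The main obstacle is uniqueness, which is \emph{not} delivered by the preceding lemmas and needs a genuinely new argument. I would establish an $L^1$-stability estimate between two entropy solutions $\rho$ and $\sigma$ (for the same fixed $\omega$) by a Kru\v{z}kov doubling-of-variables argument adapted to the nonlocal flux, as in the deterministic treatments of \cite{BlandinGoatin2016} and \cite[Sec. 2.3]{Friedrich2021_diss}. Writing the difference of the two convolved velocities as $W_\eta * \bigl(v_\gep(\rho)-v_\gep(\sigma)\bigr)$ and using the Lipschitz bound $\lVert v_\gep' \rVert \le \lVert v' \rVert$ from \eqref{eq:Dv_e_norm_approx_final} together with the kernel mass $W_0=1$, one controls the nonlocal coupling term by $\lVert v' \rVert$ times the $L^1$-norm of $\rho-\sigma$, arriving at a Gronwall inequality of the form $\tfrac{d}{dt}\lVert \rho(t,\cdot)-\sigma(t,\cdot)\rVert_{L^1} \le K\,\lVert \rho(t,\cdot)-\sigma(t,\cdot)\rVert_{L^1}$; uniqueness (and $L^1$-stability with an exponential-in-time constant, rather than a strict contraction, owing to the nonlocality) then follows. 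The delicate points are that $v_\gep$ possesses only an almost-everywhere derivative, which is handled by Definition \ref{def:def_v'} and the mean-value identity \eqref{eq:MVP_eq} whose exceptional zero-set is negligible, and that the flux is genuinely $x$- and $t$-dependent through the convolution and through $\gep(\cdot,\omega)$; the entropy formulation of Definition \ref{def:nonlocal_weak_entropy_sol}, which retains the source term $\partial_x f_\gep(t,x,c)$, is precisely what makes the doubling argument close, consistent with the discussion in Remark \ref{rem:drop_entropy_cond}.
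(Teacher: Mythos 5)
Your proposal is correct and follows essentially the same route as the paper: existence via the scheme \eqref{eq:SCHEM_sNV} under the CFL condition \eqref{eq:sCFL}, the discrete maximum principle (Lemma \ref{lem:maxP}), $L^1$-conservation (Corollary \ref{cor:L1_conservation}), the BV estimates (Lemmas \ref{lem:BV_space}, \ref{lem:BV_space_time}), Helly's theorem and the convergence Lemma \ref{lem:convergence}, with the three listed properties inherited from their discrete counterparts, and uniqueness via a Kru\v{z}kov doubling-of-variables argument with the bound $\lVert v_\gep' \rVert \leq \lVert v' \rVert$ and a Gronwall estimate, exactly as in the paper's Theorem \ref{thm:uniqe_sNV}. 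The only detail you omit is the paper's technical device of extending the density by $\rho_0$ for $t<0$ so that Helly's theorem applies on $(-T,T)\times\R$, which does not affect the substance of the argument.
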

\begin{proof}
    We commence by showing that the prerequisites for Helly's Theorem are satisfied. 
    Due to the discrete maximum principle (Lemma \ref{lem:maxP}) we obtain a $L^{\infty}(\R)$ bound, given by $\rho_0$, as
    well as an TV bound in space and time by Lemma \ref{lem:BV_space_time}. 
    As we fix $\rho(t,x):=\rho_0 \; \forall t<0$ numerically, it is sufficient to show a TV estimate over $[0,T)\times\R$ instead of $(-T,T)\times\R$, since
    \begin{align*}
        \text{TV}(\rho;(-T,T)\times\R)\leq K_T + T\cdot \text{TV}(\rho_0;\R)=:\tilde{K}_T,
    \end{align*}
    assuming $K_T$ is the TV estimate over $[0,T)\times\R$ from Lemma \ref{lem:BV_space_time}.
    Hence, we can restrict to $[0,T)\times\R$ to apply Helly's Theorem, whilst keeping the possibly higher TV estimate in mind. 
    Therefore the assumptions on Helly's Theorem are valid, such that we obtain for any fixed $\omega \in \Omega$ the existence of a sub-sequence of $\rho^{\Delta x}$ converging to some limiting density $\rho(t,x)(\omega)$.
    Due to the assumptions on $\rho_0$ and the discrete $L^1$-conservation (Corollary \ref{cor:L1_conservation}), it additionally holds that
    \begin{align*}
        \rho(\cdot,\cdot)(\omega) \in L^1([0,T)\times \R ; \R), \quad \forall \omega \in \Omega. 
    \end{align*}
    Next, by our assessment regarding the convergence of our numerical scheme (Lemma \ref{lem:convergence}),
    $\rho$ is not only some $L^1_{loc}$-limit of $\rho^{\Delta x}$ but 
    satisfies the weak entropy condition in the sense of Definition \ref{def:nonlocal_weak_entropy_sol}. 
    Thereby a weak entropy solution to the sNV model exists for every realization of the random velocities 
    and is unique as we show in the following Theorem \ref{thm:uniqe_sNV}. \\

    The additional claims (1.-3.) then follow from their discrete counterparts, i.e. Lemmas
    \ref{lem:maxP}, \ref{cor:L1_conservation}, \ref{lem:BV_space}, \ref{rem:BV_space_indep_e}, \ref{lem:BV_space_time}, 
    and the proven convergence of the numerical scheme itself.
\end{proof}
\begin{remark} \label{rem:time_cts_rho}
Additionally, due to equation (\ref{eq:Delta-t}) it holds that
    \begin{align*} 
        \rho(\cdot,x)(\omega) \in C((0,T]) \quad \forall x \in \R, \, \forall \omega \in \Omega , 
    \end{align*} 
    despite the fact that the velocities $v_\gep(\rho,t)$ are not only time-dependent,
    but also lack continuity over time, as $\gep(t,\omega)$ does not possess it.   
\end{remark}

Note, that every realization of $\gep(t,\omega)$ leads to a different solution $\rho(t,x)(\omega)$.
Hence, the uniqueness contained in our central result, has to be understood per given state  $\omega $ of the world.  
The missing piece for Theorem \ref{thm:ex_uniq_sNV_extended} is then as follows.

\begin{satz}[Uniqueness of entropy solutions to (\ref{eq:sNV})]\label{thm:uniqe_sNV} \text{ }\\
    Let the assumptions on $v$ and $W_\eta$ hold. 
    Let
    $\rho,\sigma$ be two weak entropy solutions to (\ref{eq:sNV}) with initial data $\rho_0,\sigma_0$ respectively. 
    Then, for any $T>0$, it holds
    \begin{align*}
        {\left\lVert \bigl(\rho(t,\cdot)-\sigma(t,\cdot)\bigr)(\omega) \right\rVert}_{L^1(\R)} \leq \exp(K_\gep(\omega) T) 
        {\left\lVert \rho_0-\sigma_0 \right\rVert}_{L^1(\R)} \quad \forall t \in [0,T], \; \forall \omega \in \Omega,
    \end{align*}
    with $0\leq K_\gep(\omega) \leq K \; \forall \omega \in \Omega$ given in Section \ref{thm:uniqe_sNV_app}.
\end{satz}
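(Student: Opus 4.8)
The plan is to establish an $L^1$-contraction (stability) estimate between two entropy solutions of (\ref{eq:sNV}) by adapting Kružkov's classical doubling-of-variables technique to the nonlocal, stochastic setting. The overall strategy follows the standard route: use the entropy inequality from Definition \ref{def:nonlocal_weak_entropy_sol} for both solutions $\rho$ and $\sigma$, then combine them via a clever choice of test function to derive a Gronwall-type inequality for $\|\rho(t,\cdot)-\sigma(t,\cdot)\|_{L^1(\R)}$, from which the exponential bound follows. Throughout, $\omega \in \Omega$ is fixed, so the noise $\gep(t,\omega)$ is a fixed (piecewise constant, BV, bounded) function of time, and the analysis is essentially deterministic for each realization.

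First I would write the Kružkov entropy inequality for $\rho$ with constant $c = \sigma(s,y)$ and test function depending on $(t,x)$, and symmetrically for $\sigma$ with $c = \rho(t,x)$, employing the doubling-of-variables trick with a mollifier that approximates the diagonal $x=y$, $t=s$. Adding the two inequalities, the symmetric flux-difference terms involving $\text{sign}(\rho-\sigma)\bigl(f_\gep(t,x,\rho)-f_\gep(t,x,\sigma)\bigr)$ must be controlled. The key structural fact is that our flux $f_\gep(t,x,\rho)=\rho\,V_\gep(t,x)$ factors into density times the nonlocal velocity convolution, so the flux difference splits into a term linear in $(\rho-\sigma)$ times $V_\gep$ plus a term involving the difference of the convolutions $V_\gep[\rho]-V_\gep[\sigma]$. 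Passing to the limit in the mollification parameters, I expect to arrive at a bound of the form
\begin{align*}
\frac{d}{dt}\|\rho(t,\cdot)-\sigma(t,\cdot)\|_{L^1(\R)} \leq K_\gep(\omega)\,\|\rho(t,\cdot)-\sigma(t,\cdot)\|_{L^1(\R)},
\end{align*}
where the constant $K_\gep(\omega)$ collects the Lipschitz constant of $v_\gep$ in $\rho$ and the $L^\infty$-bounds on $W_\eta$, $W_\eta'$, $V_\gep$, and $\partial_x V_\gep$. Here the estimates (\ref{eq:Dv_e_norm_approx_final}) and (\ref{eq:v_e_norm_approx_final}) from Section \ref{subsec:ana_v_e} are exactly what guarantee that $K_\gep(\omega)$ is bounded uniformly in $\omega$ by a deterministic constant $K$, since every $\gep$-dependent norm is dominated by its deterministic counterpart $\|v'\|$ or $v^{max}+\tau$. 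A final application of Gronwall's lemma then yields the claimed exponential estimate.

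\emph{The main obstacle} I anticipate is controlling the nonlocal velocity-difference term $V_\gep[\rho]-V_\gep[\sigma]$, which does not vanish on the diagonal and couples the two solutions across the whole spatial window $[x,x+\eta]$. Unlike the purely local case, one cannot simply absorb this into the sign-difference term; instead I would estimate
\begin{align*}
\bigl|V_\gep[\rho](t,x)-V_\gep[\sigma](t,x)\bigr| \leq \|W_\eta\|_\infty \|v_\gep'\|\int_x^{x+\eta}|\rho(t,y)-\sigma(t,y)|\,dy,
\end{align*}
using the mean-value-theorem representation (\ref{eq:MVP_eq})--(\ref{eq:Pi_x_y}) established for $v_\gep$ in Section \ref{subsec:ana_v_e}, which legitimizes Lipschitz estimates even though $v_\gep$ is only piecewise differentiable due to the limiter. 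The crucial point is that this estimate is integrable and, after integrating over $x$ and applying Fubini, contributes only another $\|\rho-\sigma\|_{L^1}$ term to the Gronwall constant. A secondary technical subtlety is the discontinuity of $\gep(t,\omega)$ in time: since $\gep$ jumps at the grid points $t^k$, the flux $f_\gep$ is not continuous in $t$, so the doubling in the time variable requires care. This is handled exactly as in Lemma \ref{lem:convergence} by exploiting that $\gep$ is of bounded total variation and the solution $\rho(\cdot,x)(\omega)$ is time-continuous (Remark \ref{rem:time_cts_rho}), so the finitely many jumps contribute nothing in the limit. With these two points resolved, the rest is the routine but lengthy bookkeeping of the doubling-of-variables argument, which I would defer to the appendix as Section \ref{thm:uniqe_sNV_app}.
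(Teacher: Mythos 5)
Your proposal is correct and follows essentially the same route as the paper: both reduce the problem to Kru\v{z}kov's doubling-of-variables stability estimate for the two conservation laws with coefficient velocities $V_\gep[\rho]$ and $V_\gep[\sigma]$, control the nonlocal velocity difference via the Lipschitz bound $\left\lVert v_\gep' \right\rVert$ exactly as in your key estimate, and conclude with Gronwall's lemma together with the uniform bound $\left\lVert v_\gep' \right\rVert(\omega) \leq \left\lVert v' \right\rVert$, which yields $K_\gep(\omega) \leq K$. The only difference is presentational: the paper verifies the Kru\v{z}kov hypotheses (boundedness, bounded spatial derivative, and Lipschitz continuity of $V_\gep$, $U_\gep$) and then cites the resulting stability inequality from \cite{Friedrich2018}, whereas you propose to carry out the doubling argument explicitly.
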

\begin{proof}
    In order to translate the ideas from the respective concepts of the NV model (see e.g. \cite[2.4]{Friedrich2018}), our proof relies on the bounds of $v_\gep$ in the norm derived in Section \ref{subsec:stoch_v_e} as well as on the properties of its derivative discussed in Section \ref{subsec:ana_v_e}.
    The complete proof is provided in Section \ref{thm:uniqe_sNV_app}.
\end{proof}

The above relies on the classical entropy condition to filter the unique solution to (\ref{eq:sNV}). 
However, as we have seen, numerical evidence indicates that the characteristics do not cross, which gives hope that a stronger uniqueness results might hold true as well. 

 \begin{remark} \label{rem:measurability_of_rho}
 Since our  proof is done for fixed but arbitrary $\omega \in \Omega$, only the measurability and boundedness of the noise process are relevant. 
 However, by using Helly's Theorem for a fixed $\omega \in \Omega$,  the measurability of the map
\begin{align*}
	\rho: \Omega \rightarrow C([0,T]; L^1(\R)), 
\end{align*}
remains open, and we will address this problem in our future research. Thus, quantities as $\mathbb{E} \left[ \rho(t,x) \right]$ are a-priori not well-defined. Yet, this does not affect the Monte-Carlo simulations in our work, since  on the one  hand they  rely on a discrete approximation scheme, whose quantities are well-defined random variables by construction, see Remark \eqref{num-mb} in Section \ref{sec:num schemes nonloc_stoch}, and on the other hand  they depend on the used pseudo random number generator, which corresponds (at best) to a discrete approximation of the uniform distribution.  See also the following Remark \ref{rem:stochastic_everywhere}.
\end{remark}

\begin{remark}\label{rem:stochastic_everywhere}
    If we replace the continuous uniform distribution on $[-\tau,\tau]$ by a discrete uniform distribution, e.g. with the uniform distribution on $$ \mathcal{M}= \left \{   \tau \frac{i}{M}: \, i=-M, \ldots, M \right \},$$ then
	we can work on a finite-dimensional probability space and the map
	\begin{align*}
		\rho: (\Omega,\mathcal{A}) \rightarrow C([0,T]; L^1(\R)), 
	\end{align*}
	would be trivially measurable for $\mathcal{A}=\mathfrak{P}(\Omega)$.
	\end{remark}
 
	\begin{remark} In \cite{Mishra2012} a top-down approach is used by considering
	\begin{align*}
 		\partial_t u(\omega;x,t) = \partial_x f(\omega; u(\omega;x,t)), \quad u(\omega;x,0) =u_0(\omega;x),     \qquad t >0, \; x \in \mathbb{R}^d.
	\end{align*} Here the random flux $f: \Omega \rightarrow  C^{1}(\mathbb{R}^d; \mathbb{R})$ is assumed to be bounded and measurable, and the randomness is incorporated via a Karhunen-Lo\`eve expansion. Hence, similar to our approach, the noise is introduced via a countable set of random variables. 
 Yet, our bottom-up approach does not fit into the given framework, since our model incorporates a time-dependent random flux function $f_\gep(t,x,\rho)$.
\end{remark}

\section{Numerical results} 
\label{sec:num_results}
Having established the existence and uniqueness of solutions, we proceed to present numerical results for the sNV model.
In doing so, we emphasize the key properties of the proposed model, compare them to its deterministic base  
(\ref{eq:NV}) and comment on the exhibited behavior from a modeling perspective.
To conduct our analysis, we employ our numerical scheme (\ref{eq:SCHEM_sNV}), given the velocity function $v(\rho)=1-\rho^2$ and the kernel $W_\eta^{conc.}$. 
Further, we set $\Delta x=0.3\cdot 10^{-2}$ and calculate $\Delta t$ according to the CFL condition (\ref{eq:sCFL}), depending on $\eta$ and $\tau$.
While presenting some actual realizations in grey, our focus lies on the mean behavior of the sNV model. 
In the following, we determine this key quantity and its empirical quantiles in accordance with Remark \ref{rem:measurability_of_rho}, i.e. by Monte-Carlo simulations.

\subsection{Probabilistic densities and their mean behavior  }\label{subsec:num_comp_to_NV}
    \begin{figure}[htb]
        \centering
        \begin{subfigure}{.5\textwidth}
            \centering
            \includegraphics[width=1.004\linewidth]{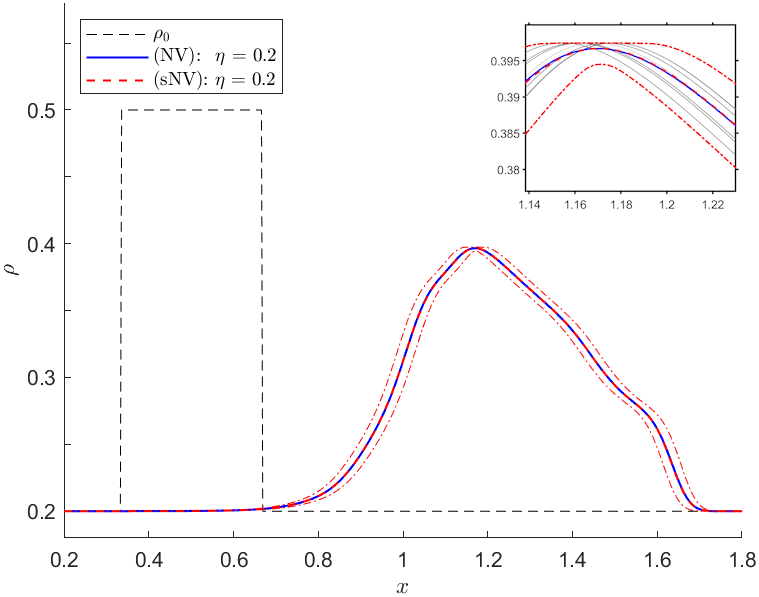}
            \caption{Low initial density, time horizon $t=1$}
            \label{fig:NV_weak_comp}
          \end{subfigure}%
          \begin{subfigure}{.5\textwidth}
            \centering
            \includegraphics[width=1.0\linewidth]{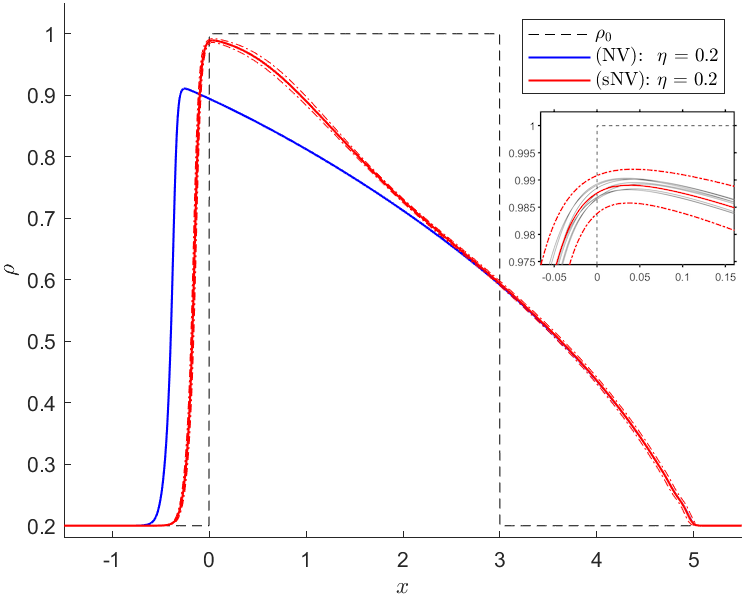}
            \caption{High initial density, time horizon $t=2$}
            \label{fig:NV_strong_comp}
          \end{subfigure}
        \caption{Mean of $N=10^4$ (\ref{eq:sNV}) realizations compared to (\ref{eq:NV}) for $\tau=0.5$. The pointwise $5\%$ and $95\%$ quantiles are given by the dotted lines. In the magnification some realizations of (\ref{eq:sNV}) are given in grey.}
        \label{fig:NV_comp}
    \end{figure}
We start our numerical assessment with an example where a low initial congestion with respect to $\tau$ and $\eta$ is chosen, such that
    \begin{align*}
    \rho_0(x)  <  v^{-1}(\tau)-\eta = \sqrt{1-0.5}-0.2  \quad \forall x \in \R
\end{align*} 
holds.
Due to $\gep(0)=0$ we have
\begin{align*}
    \E[V_\gep(0,x)](\rho_0(x))=V(0,x)(\rho_0(x)) \quad \forall x \in \R
\end{align*}
and by the maximum principle (Lem. \ref{lem:maxP}) and Lemma \ref{lem:exp_var_v} it follows
\begin{align*}
    \E[V_\gep(t,x)](\rho(t,x))=V(t,x)(\rho(t,x)) \quad \forall (t,x) \in [0,T]\times \R.
\end{align*}
Given these initial conditions, our numerical results, as depicted in Figure \ref{fig:NV_weak_comp} and Table \ref{tab:err_against_NV}, indicate that the missing bias of the velocity function does translate to the mean behavior of (\ref{eq:sNV}).
Hence, given this sufficiently small congestion, the sNV model empirically achieves mean consistency with its deterministic basis (\ref{eq:NV}) at least up to an error term.

\begin{table}[htb]
    \centering
\begin{tabular}{ll|ccc}\hline
                                    &                 & $N=10^3$ & $N=10^4$  &$N= 5\cdot 10^4$\\ \hline 
        $\Delta x=0.3 \cdot 10^{-2}$&$L^1$-dist.       & $0.046717$ & $0.040547$ &$0.040294$        \\
                                    &$L^2$-dist.       & $0.000011$ & $0.000008$ &$0.000008$        \\ 
                                    &$L^\infty$-dist.  & $0.000553$ & $0.000496$ &$0.000489$        \\ \hline
        $\Delta x= 10^{-3}$         &$L^1$-dist.       & $0.075139$ & $0.075888$ &$0.063205$        \\
                                    &$L^2$-dist.       & $0.000018$ & $0.000018$ &$0.000011$        \\ 
                                    &$L^\infty$-dist   & $0.000711$ & $0.000711$ &$0.000524$        \\ \hline
        
\end{tabular}
    \caption{Distances between the mean of (\ref{eq:sNV}) and (\ref{eq:NV}) of Figure \ref{fig:NV_weak_comp}.}
    \label{tab:err_against_NV}
\end{table}

This behavior is further emphasized by the fact that every probabilistic realization is not only well posed (Thm. \ref{thm:ex_uniq_sNV_extended}) but also admits a similar behavior as (\ref{eq:NV}), i.e. they are smooth to the same degree, they have a comparable height, and they have moved similarly over the spatial domain. 
Therefore our proposed model allows us to fit the deterministic basis to the mean of given empirical data and tweak the stochastic parameters to explain observations further offside the observed mean. 
A rigorous proof for the illustrated behaviour alongside the development of an understanding how the expectation of such SCL is to be understood analytically, is planned for future research. 

To emphasize the influence of the initial data to the behavior of the model, we increase the height and domain of $\rho_0$'s maxima and plot the results in Figure \ref{fig:NV_strong_comp}. 
Given such data, the sNV model exhibits a notably distinct mean behavior than the NV model.
By our assessment of the stochastic velocity function in Section \ref{subsec:stoch_v_e}, we attribute  this phenomenon to the altered velocity of the interacting vehicles.
Compared to (\ref{eq:NV}), vehicles detecting a congestion ahead, employ an on average higher velocity, resulting in a slower dissipation of the congestion. 
Once the downstream densities are low enough or have dissolved appropriately, (\ref{eq:sNV}) smoothly transitions back to becoming mean consistent as before.
This mean deviation, aligns well with empirical observations presented in \cite[Fig. 1]{Wang2011}, where the authors state, that a deterministic density-velocity relation seems to break down especially for high densities. 
Thus, our proposed model successfully replicates such deviations, including increased uncertainty at high densities, while maintaining consistency in regions of low density.

\begin{remark}
During our assessment of the characteristics, it was observable that for any given initial density the mean appears to be well captured by the NV model when utilizing the expected velocity $\bar{v}$.
As initial numerical experiments have shown, this does translate to the densities as well. 
While such result would allow us to capture the expected behavior for any given initial density, further research is necessary to validate this claim.
\end{remark}

\subsection{Influence of parameters}\label{subsec:num_paramvar}

Having seen the significance of the initial densities alongside the overall model behavior, we now proceed to demonstrate the modeling capabilities by varying the two central parameters $\tau$ and $\eta$.\\

\begin{figure}[htb]
    \centering
    \begin{subfigure}{.5\textwidth}
        \centering
        \includegraphics[width=1.0\linewidth]{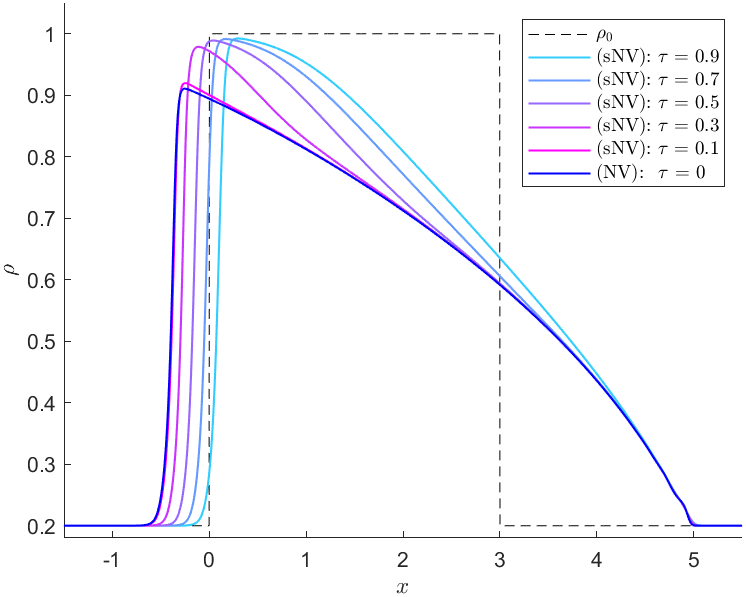}
      \end{subfigure}%
      \begin{subfigure}{.5\textwidth}
        \centering
        \includegraphics[width=1.0\linewidth]{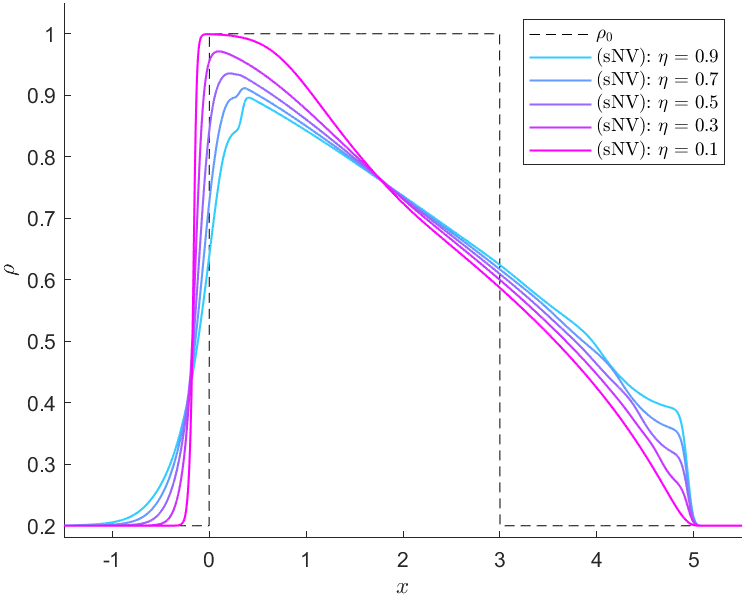}
      \end{subfigure}
    \caption
    {Mean of $N=10^4$ (\ref{eq:sNV}) realizations for each $\tau$, ($\eta=0.2$) and each $\eta$, ($\tau=0.5$), at $t=2$, given high initial density.}
    \label{fig:big_bump_multi_tau_eta}
\end{figure}

Alternating $\tau$ in Figure \ref{fig:big_bump_multi_tau_eta} (left), we note a nonlinear coupling of (\ref{eq:sNV})'s mean with respect to the strength of the error term.
Furthermore, the behavior is monotone with the increase or decrease of $\tau$, and as $\tau \rightarrow 0$ we empirically observe convergence of (\ref{eq:sNV}) to (\ref{eq:NV}). \\

Considering a fixed error strength $\tau$ and varying the look ahead distance $\eta$ in Figure \ref{fig:big_bump_multi_tau_eta} (right), the sNV model exhibits a comparable behavior as the NV model (see e.g. \cite[2.4]{Friedrich2021_diss}). 
Especially for the extreme cases of $\eta \rightarrow 0$ and $\eta \rightarrow \infty$, Figure \ref{fig:big_bump_multi_tau_eta} alongside additional experiments suggest the convergence to the solution of a LWR model, using $\bar{v}$, as well as to a Transport Problem, with a different characteristic speed as for (\ref{eq:NV}). 
However, in comparison to the results of \cite[2.4]{Friedrich2021_diss} our preliminary results need additional theory on the convergence obtained on non-compact supports and once again rely on the role of the expected velocity $\bar{v}$ with respect to the overall behavior.

\section{Conclusion and outlook} \label{sec:conclusion}

In this work, we proposed a novel traffic flow model incorporating stochastic nonlocal velocities and analyzed it from both theoretical and numerical perspectives. 
We rigorously proved the existence and uniqueness of weak entropy solutions and developed an effective numerical scheme as well as an implementation of the model.
Our analysis revealed that the introduced noise does not significantly destabilize the system, making the proposed model a promising candidate for real-world applications.
Furthermore, we investigated the conditions under which the mean dynamics of the perturbed system align with the deterministic base model, emphasizing the influence of both nonlocal and stochastic parameters in relation to the initial density.
We specifically demonstrated that the mean behavior of the stochastic model deviates from the deterministic model, particularly during periods of high traffic densities. 
This observation is consistent with empirical data, suggesting that a deterministic relationship between density and velocity may not adequately describe traffic under congested conditions.
Additionally, we introduced and provided initial results supporting a novel theory on how the mean behavior could be captured for any set of initial conditions.

For future research, we suggest further developing and integrating the proposed model into a comprehensive theoretical framework,
especially defining how expected solutions and the variance of scalar conservation laws with stochastic fluxes are to be understood.

\section*{Acknowledgments} 
S.G. acknowledges financial support of the German Research Foundation (DFG) within the projects GO1920/11-1 and GO1920/12-1.

\bibliographystyle{siam}
\bibliography{quellen,literature_ext}

\appendix 
\section{Appendix} \label{sec:App}

\subsection{Detailed proofs}\label{App_proofs} 

To emphasise the stochastic influence alongside the time dependence of $v_\gep$, necessary changes and adjustments to the \emph{proofs} of the NV model are highlighted in \textcolor{blue}{blue color}.

\subsubsection{Discrete maximum principle}\label{lem:maxP_app} 
The detailed proof regarding Lemma \ref{lem:maxP} is as follows.
\begin{proof}
    As in \cite[3.3]{Friedrich2018}, we prove the claim by induction in time, whilst applying our findings on 
    $v_\gep$ and $v_\gep'$ from Section \ref{subsec:ana_v_e}.
    \begin{itemize}[noitemsep]
        \item For $n=0$ the claim is trivial.
        \item Suppose (\ref{eq:max_p_*}) holds for a fixed but arbitrary $n \in \N$.
        \item Now, if we apply the definition of the discrete convolution, it holds
        \begin{align} 
            V_{\gep,j-1}^n-V_{\gep,j}^n 
            &= \sum_{k=1}^{N_\eta-1}
            \underbrace{(\gamma_{k}-\gamma_{k-1}) }_{\leq 0}
                 v_\gep^{n}(\rho_{j+k}^{n}) 
            \underbrace{-\gamma_{N_\eta-1}}_{<0}
                v_\gep^{n}(\rho_{j+N_\eta}^{n})
            +\gamma_{0}
                v_\gep^{n}(\rho_{j}^{n}) \label{eq:max_p_V_eq}\\ 
            &\leq \sum_{k=1}^{N_\eta-1}
            (\gamma_{k}-\gamma_{k-1}) 
                 v_\gep^{n}\Bigl(\sup_{j \in \Z} \rho_j^0\Bigr) 
            -\gamma_{N_\eta-1}
                v_\gep^{n}\Bigl(\sup_{j \in \Z} \rho_j^0\Bigr) 
            +\gamma_{0}
                v_\gep^{n}(\rho_{j}^{n}) \nonumber \\
            &=\gamma_{0} \left(v_\gep^{n}(\rho_j^n) -v_\gep^{n}\Bigl(\sup_{j \in \Z} \rho_j^0\Bigr)    \right) \nonumber\\
            &\leq
            \gamma_0 {\lVert  {v_\gep^n}' \rVert}_{L^\infty} 
            \Bigl(\underbrace{\sup_{j \in \Z} \rho_j^0 -\rho_j^n}_{\geq 0}\Bigr), \label{eq:V-V_bound}
        \end{align}
        using the \textcolor{blue}{monotonicity on $v_\gep^n$, i.e. $({v_\gep^n})' \leq 0$ 
        }, along with the induction hypothesis.
        The second equality then follows from a telescoping sum argument.
        \textcolor{blue}{Lastly we used the 
        Lipschitz-continuity of $v_\gep^n$.}
        If we multiply this by $\sup_{j \in \Z} \rho_j^0$, subtract $V_{\gep,j}^n\rho_j^n$ on
        both sides and rearrange the equations, we derive
        \begin{align}
            V_{\gep,j-1}^n \sup_{j \in \Z} \rho_j^0 - V_{\gep,j}^n\rho_j^n 
            & \leq \Bigl(\gamma_0  {\lVert {v_\gep^n}' \rVert}  \sup_{j \in \Z} \rho_j^0  + V_{\gep,j}^n \Bigr) 
            \Bigl(\sup_{j \in \Z} \rho_j^0 -\rho_j^n\Bigr)\nonumber\\
            &\leq \Bigl(\gamma_0 {\lVert {v_\gep^n}' \rVert} \rho^{max} + \lVert v_\gep^n \rVert \Bigr) 
            \Bigl(\sup_{j \in \Z} \rho_j^0 -\rho_j^n\Bigr) \label{eq:Vsup-V}
        \end{align}
        and thus, by the definition of the scheme and again the induction hypothesis (IH)
        \begin{align*}
            \rho_j^{n+1} 
            &= \rho_{j}^n+\lambda \Bigl(F_{j-1/2}^n(\rho_{j-1}^n)-F_{j+1/2}^n(\rho_{j}^n)  \Bigr)\\
            \overset{\text{(IH)}}&{\leq} \rho_{j}^n+\lambda \Bigl( V_{\gep,j-1}^n \sup_{j \in \Z} \rho_j^0 -  V_{\gep,j}^n\rho_j^n \Bigr)\\
            \overset{(\ref{eq:Vsup-V})}&{\leq} 
            \rho_{j}^n+
            \underbrace{\textcolor{blue}{\lambda  \Bigl(\gamma_0{\lVert {v_\gep^n}' \rVert} \rho^{max}  + \lVert v_\gep^n \rVert \Bigr)}}_{=:\Lambda} 
            \Bigl(\sup_{j \in \Z} \rho_j^0 -\rho_j^n\Bigr)\\
            &= (1-\Lambda) \rho_{j}^n + \Lambda \sup_{j \in \Z} \rho_j^0 \;
            \leq \sup_{j \in \Z} \rho_j^0,
        \end{align*}
        since $1-\Lambda > 0$ due to the \textcolor{blue}{adapted CFL condition}.\\
        For the left inequality in (\ref{eq:max_p_*}) we can proceed as above, changing multiple signs
        to get to
        \begin{align*}
            \rho_j^{n+1} 
            &\geq \rho_{j}^n+ \Lambda \Bigl(\inf_{j \in \Z} \rho_j^0 - \rho_{j}^n \Bigr) \geq \inf_{j \in \Z} \rho_j^0. \qedhere
        \end{align*} 
    \end{itemize}
\end{proof}
\pagebreak 
\begin{remark}
    We can show a different, on $\Delta x$ dependent, upper bound of the differences (\ref{eq:max_p_V_eq}) by
    using $\gamma_k \leq W_\eta(0) \Delta x, \; \forall k=0,\dots,N_\eta-1$:
    \begin{align}\label{eq:bound_on_diff_deltax}
        |V_{\gep,j}^n-V_{\gep,j-1}^n| 
        &= \left\lvert  \sum_{k=0}^{N_\eta-1} \gamma_k v_\gep^n(\rho_{j+k+1}^n) -
                        \sum_{k=0}^{N_\eta-1} \gamma_k v_\gep^n(\rho_{j+k}^n)  \right\rvert \nonumber \\ 
        &\leq W_\eta(0) \Delta x \left( 
             \sum_{k=0}^{N_\eta-1} \left\lvert v_\gep^n(\rho_{j+k+1}^n)-v_\gep^n(\rho_{j+k}^n) \right\rvert \right) \nonumber \\
        &\leq W_\eta(0) \Delta x \left( 
            \sum_{k=0}^{N_\eta-1}  \lVert v_\gep \rVert \lVert \rho \rVert \Delta x  \right) \nonumber \\
        &= W_\eta(0) \Delta x \Bigl( \left\lfloor \eta/\Delta x \right\rfloor \lVert v_\gep \rVert \lVert \rho \rVert \Delta x  \Bigr) \nonumber \\
        &\leq \Delta x  W_\eta(0)  \eta \lVert v_\gep \rVert \lVert \rho \rVert .
    \end{align}
\end{remark}

\subsubsection{BV estimates} \label{lem:BV_app} 
To prove the spatial bound of the total variation, i.e. Lemma \ref{lem:BV_space}, we proceed as follows.
\begin{proof}
    We use the method described in \cite[3.4]{Friedrich2018} and define $\Delta_{j+k-\frac{1}{2}}^n:=\rho_{j+k}^n-\rho_{j+k-1}^n$.\\
    Then, if we apply the scheme twice
    \begin{align*}
        \Delta_{j+\frac{1}{2}}^{n+1}
        &=\rho_{j+1}^{n+1}-\rho_{j}^{n+1}\\
        &=\rho_{j+1}^{n}-\rho_{j}^{n}
            -\lambda \Bigl( F_{j+\frac{3}{2}}^{n}(\rho_{j+1}^{n})-F_{j+\frac{1}{2}}^{n}(\rho_{j}^{n}) \Bigr)
            +\lambda \Bigl( F_{j+\frac{1}{2}}^{n}(\rho_{j}^{n})-F_{j-\frac{1}{2}}^{n}(\rho_{j-1}^{n}) \Bigr)\\
        &=\Delta_{j+\frac{1}{2}}^{n}
            -\lambda \Bigl( V_{\gep,j+1}^{n} \rho_{j+1}^{n} -2 V_{\gep,j}^{n} \rho_{j}^{n} + V_{\gep,j-1}^{n} \rho_{j-1}^{n}     
                     \Bigr)\\
        \overset{\pm0}&{=} \Delta_{j+\frac{1}{2}}^{n}
            -\lambda \Bigl[ V_{\gep,j+1}^{n} (\rho_{j+1}^{n}-\rho_{j}^{n})
                            -V_{\gep,j-1}^{n} (\rho_{j}^{n}-\rho_{j-1}^{n})
                            +\rho_j^n \Bigl( \underbrace{V_{\gep,j+1}^{n}-2V_{\gep,j}^{n}+V_{\gep,j-1}^{n}}_{=:(\text{\textasteriskcentered{}})} \Bigr)\Bigr].
    \end{align*}
    Now we can use (\ref{eq:max_p_V_eq}) twice to obtain
    \begin{align*}
        (\text{\textasteriskcentered{}}) &=
         -\Bigl(V_{\gep,j}^{n}-V_{\gep,j+1}^{n}\Bigr)+\Bigl(V_{\gep,j-1}^{n}-V_{\gep,j}^{n}\Bigr)\\
            &= -\gamma_0v_\gep^{n}(\rho_{j+1}^{n}) 
                                - \sum_{k=1}^{N_\eta-1}(\gamma_{k}-\gamma_{k-1})  v_\gep^{n}(\rho_{j+1+k}^{n})
                                +\gamma_{N_\eta-1} v_\gep^{n}(\rho_{j+1+N_\eta}^{n})\\
            &\quad\;+\gamma_0v_\gep^{n}(\rho_{j}^{n}) 
                                + \sum_{k=1}^{N_\eta-1}(\gamma_{k}-\gamma_{k-1})  v_\gep^{n}(\rho_{j+k}^{n})
                                -\gamma_{N_\eta-1} v_\gep^{n}(\rho_{j+N_\eta}^{n})\\
            &=-\gamma_0
            \textcolor{blue}{(v_\gep^{n})'\Bigl(\xi_{j+\frac{1}{2}}^{n}\Bigr)\Pi(\rho_{j+1}^n,\rho_j^n)} 
            \Delta_{j+\frac{1}{2}}^{n}\\
             &\quad\;-\sum_{k=1}^{N_\eta-1}(\gamma_{k}-\gamma_{k-1})  
            \textcolor{blue}{(v_\gep^{n})'\Bigl(\xi_{j+\frac{1}{2} +k}^{n}\Bigr)\Pi(\rho_{j+1+k}^n,\rho_{j+k}^n)}
            \Delta_{j+\frac{1}{2}+k}^{n}\\
                                &\quad\;+\gamma_{N_\eta-1} 
            \textcolor{blue}{(v_\gep^{n})'\Bigl(\xi_{j+\frac{1}{2}+N_\eta}^{n}\Bigr) \Pi(\rho_{j+1+N_\eta}^n,\rho_{j+N_\eta}^n)}
            \Delta_{j+\frac{1}{2}+N_\eta}^{n}.
    \end{align*}
    Here $\xi_{j+1/2}^{n}$ is a value between $\rho_j^n$ and $\rho_{j+1}^n$ and \textcolor{blue}{$\Pi(x,y)$ as in (\ref{eq:Pi_x_y})}.
    Now, we plug (\textasteriskcentered{}) back in and receive
    \begin{align}
        \Delta_{j+\frac{1}{2}}^{n+1}&= 
            \left(1-\lambda \Bigl( V_{\gep,j+1}^{n}-\gamma_0(v_\gep^{n})'(\xi_{j+\frac{1}{2}}^{n})\rho_j^n\Bigr) 
            \right)\textcolor{blue}{\Pi(\rho_{j+1}^n,\rho_j^n)}\Delta_{j+\frac{1}{2}}^{n} \tag{i}\\
            &\quad\;+\lambda V_{\gep,j-1}^{n} \Delta_{j-\frac{1}{2}}^{n}\tag{ii}\\
            &\quad\;+\lambda \rho_j^n \sum_{k=1}^{N_\eta-1}(\gamma_{k}-\gamma_{k-1})
            (v_\gep^{n})'(\xi_{j+\frac{1}{2} +k}^{n})\textcolor{blue}{\Pi(\rho_{j+1+k}^n,\rho_{j+k}^n)}\Delta_{j+\frac{1}{2}+k}^{n}\tag{iii}\\
            &\quad\;-\lambda\rho_j^n \gamma_{N_\eta-1} (v_\gep^{n})'(\xi_{j+\frac{1}{2}+N_\eta}^{n})\textcolor{blue}{\Pi(\rho_{j+1+N_\eta}^n,\rho_{j+N_\eta}^n)}\Delta_{j+\frac{1}{2}+N_\eta}^{n}\tag{iv}.
    \end{align}
    Since
    \begin{itemize}
        \item $V_{\gep(\omega),j-1}^{n} \geq 0 \quad \forall n \in \{0,\dots,N_T\},\; \forall j \in \Z,\; \forall \omega \in \Omega$
               (Eq. (\ref{eq:V_gep_grt_0})),
        \item $\gamma_k - \gamma_{k-1} \leq 0 \quad \forall k=1,\dots,N_\eta$  
            (Rem. \ref{rem:ass_W} and Eq. (\ref{eq:gamma_k})),
        \item \textcolor{blue}{$({v_\gep^n})' (\rho) \leq 0 \quad \forall n \in \{0,\dots,N_T\}, \; \rho \in [0,\rho^{max}]$ 
        (Def. \ref{def:def_v'} and Eq. (\ref{eq:def_v^n}))}, 
        \item \textcolor{blue}{$\Pi(x,y) \in (0,1] \quad x \neq y$  (Eq. \ref{eq:Pi_x_y})},
    \end{itemize}
    the terms (ii)-(iv) before the differences $\Delta_{j+1/2+(\cdot)}^{n}$ are positive. 
    Due to the \textcolor{blue}{adapted CFL condition (\ref{eq:sCFL})}, we have
    \begin{align*}
        0
        \leq \lambda \Bigl( V_{\gep,j+1}^{n}-\gamma_0(v_\gep^{n})'(\xi_{j+\frac{1}{2}}^{n})\rho_j^n\Bigr) 
        \leq \textcolor{blue}{\lambda \Bigl( \lVert v_\gep \rVert + \gamma_0 \lVert v_\gep' \rVert \lVert \rho \rVert \Bigr)
        \leq 1}.
    \end{align*}
    Hence, the term (i) before $\Delta_{j+1/2}^{n}$ is positive as well. \textcolor{blue}{Using $|\Pi(x,y)|\leq1$} and    
    summing over $\Z$, we obtain
    \begin{align*}
        \sum_{j \in \Z} \Bigl|\Delta_{j+\frac{1}{2}}^{n+1}\Bigr| &\leq
        \sum_{j \in \Z}\left(1-\lambda \Bigl( V_{\gep,j+1}^{n}-\gamma_0(v_\gep^{n})'(\xi_{j+\frac{1}{2}}^{n})\rho_j^n\Bigr) 
            \right)\Bigl|\Delta_{j+\frac{1}{2}}^{n}\Bigr| \\
            &\quad\;+\lambda \sum_{j \in \Z} V_{\gep,j-1}^{n} \Bigl|\Delta_{j-\frac{1}{2}}^{n}\Bigr|\\
            &\quad\;+\lambda \sum_{j \in \Z} \rho_j^n \sum_{k=1}^{N_\eta-1}(\gamma_{k}-\gamma_{k-1})
            (v_\gep^{n})'(\xi_{j+\frac{1}{2} +k}^{n})\Bigl|\Delta_{j+\frac{1}{2}+k}^{n}\Bigr|\\
            &\quad\;-\lambda \sum_{j \in \Z} \rho_j^n \gamma_{N_\eta-1} (v_\gep^{n})'(\xi_{j+\frac{1}{2}+N_\eta}^{n})\Bigl|\Delta_{j+\frac{1}{2}+N_\eta}^{n}\Bigr|,
    \end{align*}
    which can be written, due to the summation on the whole space, as
    \begin{align}
        \sum_{j \in \Z} \Bigl|\Delta_{j+\frac{1}{2}}^{n+1}\Bigr|
             &\leq \sum_{j \in \Z} \Biggl[ 
                1-\lambda \Bigl(V_{\gep,j+1}^{n}-V_{\gep,j}^{n}\Bigr)\nonumber\\
                &\quad\;-(v_\gep^{n})'(\xi_{j+\frac{1}{2}}^{n}) \lambda \Biggl(
                    -\gamma_0 \rho_j^n + \sum_{k=1}^{N_\eta-1} (\gamma_{k-1}-\gamma_k) \rho_{j-k}^n 
                    +\gamma_{N_\eta-1} \rho_{j-N_\eta}^n         
                \Biggr)
                \Biggr] \Bigl|\Delta_{j+\frac{1}{2}}^{n}\Bigr|\nonumber\\
                &\leq \sum_{j \in \Z} \Biggl[
                    1-\underbrace{\lambda  \Bigl(V_{\gep,j+1}^{n}-V_{\gep,j}^{n}\Bigr)}_
                    {\mathrlap{\leq\,\lambda \gamma_0 \lVert {v_\gep^n}' \rVert (\sup_{j} \{\rho_j^0\}-\rho_j^n) \leq \,\lambda \gamma_0 \lVert v_\gep^n \rVert \lVert \rho\rVert \text{ by } (\ref{eq:V-V_bound})}}
                    +\lVert {v_\gep^n}' \rVert \lambda \gamma_0 \lVert \rho \rVert
                \Biggr] \Bigl|\Delta_{j+\frac{1}{2}}^{n}\Bigr|, \label{eq:BV_1}
    \end{align}
    where we have used a telescoping argument for the series. We further reduce the above to
    \begin{align*}
        \sum_{j \in \Z} \Bigl|\Delta_{j+\frac{1}{2}}^{n+1}\Bigr|
        &\leq \Bigl[1+\lambda \gamma_0 \bigl(\lVert {v_\gep^n} \rVert   
        +\lVert {v_\gep^n}' \rVert \bigr)  \lVert\rho\rVert \Bigr] \sum_{j \in \Z}\Bigl|\Delta_{j+\frac{1}{2}}^{n}\Bigr| \\
        &\leq \Bigl[1+\Delta t W_\eta(0) \bigl(\lVert {v_\gep} \rVert   
        +\lVert {v_\gep}' \rVert \bigr)  \lVert\rho\rVert \Bigr] \sum_{j \in \Z}\Bigl|\Delta_{j+\frac{1}{2}}^{n}\Bigr|.
    \end{align*}
    By iterative repeating of the above and using
    \begin{align} \label{eq:TV_one_dim}
     \sum_{j\in \Z} |\rho^0_{j+1}-\rho^0_{j}| = \text{TV}\left(\{\rho^0_j\}_{j \in \Z} ;\R\right) \leq 
     \text{TV}(\rho_0;\R),
     \end{align}
     we obtain
    \begin{align}
        \text{TV}(\rho^{\Delta x}(T,\cdot);\mathbb{R}) &\leq \Bigl[1+\Delta t W_\eta(0) \bigl(\lVert {v_\gep} \rVert   
        +\lVert {v_\gep}' \rVert \bigr)  \lVert\rho\rVert \Bigr]^{T/\Delta t} \text{TV}\left(\{\rho^0_j\}_{j \in \Z} ;\R\right) \nonumber\\ 
        &\leq \exp \Bigl(T \underbrace{\textcolor{blue}{W_\eta(0) (\lVert {v_\gep} \rVert   
        +\lVert {v_\gep}' \rVert )  \lVert\rho\rVert}}_{=:\, C(W_\eta,v,\rho^{max},\textcolor{blue}{\gep})}\Bigr) \text{TV}(\rho_0;\R). \qedhere
    \end{align}
\end{proof}

\begin{remark} \label{rem:BV_space_indep_e_app} 
 Similar to (\ref{eq:V-V_bound}) one can show that
    \begin{align*}
        - \Bigl(V_{\gep,j+1}^{n}-V_{\gep,j}^{n}\Bigr) 
        \leq \gamma_0 \lVert {v_\gep^n}' \rVert \Bigl(\sup_{j} \{\rho_j^n\}-\rho_j^n\Bigr)
        \leq \gamma_0 \lVert {v_\gep^n}' \rVert \lVert \rho^n \rVert
        \leq \gamma_0 \lVert v' \rVert \lVert \rho^0 \rVert,
    \end{align*}
    due to our assessment of $({v_\gep^n})'$  (Sec. \ref{subsec:ana_v_e}) and the maximum principle. This implies the bound
    \begin{align*}
        (\ref{eq:BV_1}) \leq \Bigl[1+\Delta t W_\eta(0) 2 \lVert v' \rVert  \lVert\rho\rVert \Bigr] 
            \sum_{j \in \Z}\Bigl|\Delta_{j+\frac{1}{2}}^{n}\Bigr|
    \end{align*}
    and thus
    \begin{align*}
        \text{TV}(\rho^{\Delta x}(T,\cdot);\R) 
        &\leq \exp \Bigl(T \underbrace{2 W_\eta(0) \lVert v' \rVert  \lVert\rho\rVert}_{
            =:\, \tilde{C}(W_\eta,v,\rho^{max})}\Bigr) \text{TV}(\rho_0;\R).
    \end{align*}
    
\end{remark}

Using Lemma \ref{lem:BV_space} or optionally Remark \ref{rem:BV_space_indep_e} we present the proof for Lemma \ref{lem:BV_space_time}.
\begin{proof}
    The claim will only be shown for $C$, as for $\tilde{C}$ just a few estimates have to be exchanged. 
    We adapt the proof from \cite[3.5]{Friedrich2018} and start by fixing any $T \geq 0$. 
    Now we have
    \begin{itemize}
        \item If $T \leq \Delta t$, then $\text{TV}(\rho^{\Delta x};\R \times [0,T])  \leq T \cdot \text{TV}(\rho^0;\R)$ 
                as no time step is necessary to calculate $\rho(t,x), \; \forall t\leq T$. 
        \item If $T > \Delta t$, we fix the discrete time horizon $N_T \in \N \setminus \{0\}$ such that \\
            $N_T \Delta t < T \leq (N_T+1)\Delta t$. 
            Hence, by Definition of the total variation in two dimensions and twice applying the first equality of (\ref{eq:TV_one_dim}), the total variation can be written as 
            \begin{align}
                \text{TV}(\rho^{\Delta x};\R \times [0,T])&=
                \underbrace{\sum_{n=0}^{N_T-1} \sum_{j \in \Z} \Delta t |\rho_{j+1}^{n}-\rho_{j}^{n}|+(T-N_T\Delta t) \sum_{j \in \Z} |\rho_{j+1}^{N_T}-\rho_{j}^{N_T}|}
                _{\mathclap{\leq T \cdot \left( \exp (T C ) \text{TV}(\rho_0;\R) \right)  }} \nonumber\\
                &\quad\;+\sum_{n=0}^{N_T-1} \sum_{j \in \Z} \Delta x |\rho_{j}^{n+1}-\rho_{j}^{n}|. \label{eq:TV_proof1}
            \end{align}
            Therefore, we are left with bounding the last term. Considering our scheme, we derive
            \begin{align*}
                \rho_j^{n+1}-\rho_j^n &= \lambda \Bigl(V_{\gep,j-1}^{n} \rho_{j-1}^{n}-V_{\gep,j}^{n} \rho_{j}^{n}\Bigr) \\
                \overset{\pm 0}&{=}\lambda \left(  \Bigl(V_{\gep,j-1}^{n}-V_{\gep,j}^{n} \Bigr) \rho_{j-1}^{n}-
                                                    V_{\gep,j}^{n}\Bigl( \rho_{j}^{n}-\rho_{j-1}^{n}\Bigr)   \right)\\
                &= \lambda \Bigl(-\rho_{j-1}^{n} \Bigl( \sum_{k=0}^{N_\eta-1} \gamma_k \bigl(v_\gep^n(\rho_{j+k+1}^n)-v_\gep^n(\rho_{j+k}^n)\bigr) \Bigr)
                -V_{\gep,j}^n(\rho_{j}^n-\rho_{j-1}^n)  \Big)\\
                &= \lambda \Bigl(-\rho_{j-1}^{n} \Bigl( \sum_{k=0}^{N_\eta-1} \gamma_k 
                \textcolor{blue}{(v_\gep^n)'\Pi(\rho_{j+k+1}^n,\rho_{j+k}^n)(\rho_{j+k+1}^n-\rho_{j+k}^n)} \Bigr)
                -V_{\gep,j}^n(\rho_{j}^n-\rho_{j-1}^n)  \Big)
            \end{align*}
            where we added and subtracted $V_{\gep,j}^n \rho_{j-1}^n$, applied our scheme and lastly 
            \textcolor{blue}{used our assessment of $v_\gep'$ as in Section \ref{subsec:ana_v_e}}. Taking absolute values, 
            \textcolor{blue}{applying
            $\lVert {v_\gep^n}' \rVert \leq \lVert {v_\gep}' \rVert \leq \lVert {v}' \rVert$, $|V_{\gep,j}^n|\leq \lVert v_\gep^n \rVert$ and $|\Pi(x,y)| \leq 1$} yields
            \begin{align*}
                |\rho_j^{n+1}-\rho_j^n| &\leq \lambda \Biggl( \lVert\rho \rVert \lVert {v}' \rVert \sum_{k=0}^{N_\eta-1} \gamma_k |\rho_{j+k+1}^n-\rho_{j+k}^n| 
                + \textcolor{blue}{\lVert v_\gep^n \rVert} |\rho_{j}^n-\rho_{j-1}^n| \Biggr). 
            \end{align*}
            Summing over $\Z$ and \textcolor{blue}{applying (\ref{eq:Dv_e_norm_approx_final}) again}, gives us
            \begin{align}\label{eq:BV_space_time_eq}
                \sum_{j \in \Z} \Delta x |\rho_j^{n+1}-\rho_j^n| &\leq \underbrace{\Delta x \lambda}_{\Delta t} \sum_{j \in \Z} |\rho_j^{n}-\rho_{j-1}^n|
                \Bigl( W_\eta(0) \lVert v' \rVert \lVert\rho \rVert +\textcolor{blue}{\lVert v_\gep \rVert}\Bigr).
            \end{align}
            Finally, summing over the time steps
            \begin{align}\label{eq:K_1_gep_T}
                \sum_{n=0}^{N_T-1} \sum_{j \in \Z} \Delta x |\rho_j^{n+1}-\rho_j^n| 
                &\leq T \cdot \Bigl[ \exp \bigl(T C(W_\eta,v,\rho^{max},\textcolor{blue}{\gep})\bigr) \text{TV}(\rho_0;\R)\nonumber\\
                & \quad \qquad \cdot \bigl(W_\eta(0) \lVert v' \rVert \lVert \rho \rVert + \textcolor{blue}{\lVert v_\gep \rVert} \bigr)\Bigr] \nonumber\\
                & =: T \cdot \textcolor{blue}{\mathcal{K}_\gep(T)}, 
            \end{align}
            applying our knowledge on the spatial TV bound (Lem. \ref{lem:BV_space}). Altogether with (\ref{eq:TV_proof1}) we obtain
            \begin{align*}
                \text{TV}(\rho^{\Delta x};\R \times [0,T]) \leq T \exp \Bigl(T C(W_\eta,v,\rho^{max},\textcolor{blue}{\gep}) \Bigr) 
                \Bigl( 1+W_\eta(0) \lVert v' \rVert \lVert \rho \rVert + \textcolor{blue}{\lVert v_\gep \rVert}\Bigr)
                \text{TV}(\rho_0;\R)
            \end{align*}
            which is the claim. \qedhere
    \end{itemize}
\end{proof} 

\begin{remark}
    As in \cite[2.24]{Friedrich2021_diss}, from (\ref{eq:BV_space_time_eq}) it follows, analogous to (\ref{eq:K_1_gep_T}) that
    \begin{align}\label{eq:Delta-t}
        \sum_{j \in \Z} \Delta x |\rho_j^{n+1}-\rho_j^n| &\leq \Delta t \cdot \textcolor{blue}{\mathcal{K}_\gep(T)},
    \end{align}
    which will become of use in Remark \ref{rem:time_cts_rho}.\\
\end{remark}

\subsubsection{Discrete entropy inequality} \label{lem:discrete_enropy_ineq_app}
For the proof of Lemma \ref{lem:discrete_enropy_ineq}, we prove
 that under our assumptions on the error term, $v_\epsilon$
    is an admissible velocity function, although it neither satisfies the classical assumptions of 
    \cite[2.25]{Friedrich2021_diss} nor the NV model (Rem. \ref{rem:ass_v_ND}), whilst reducing the proof from a 1-to-1 case
    to our setting of a singular road. Additionally, we use the fact that $V_{\epsilon,j}^n \geq 0$.
    
\begin{proof} 
    \textcolor{blue}{First note that, as pointed out in Section \ref{sec:num schemes nonloc_stoch}, the numerical flux is a random variable in every time step,
    thus actually $F_{j+1/2}^n= F_{j+1/2}^{n}\bigl( \epsilon(t^n,\omega) \bigr) ,\;j \in \Z, \, \omega \in \Omega$.
    Yet, spatial differentiability is still given since $\epsilon(t^n,\omega)$ is constant in the spatial dimension. Therefore, we
    omit the notation with respect to $\omega$ here as well.} 
    Let 
    \begin{align*}
        G^n_j(u,w):=w-\lambda \left( F_{j+1/2}^n(w)-F_{j-1/2}^n(u) \right).
    \end{align*}
    Then $G_j$ 
    is monotone with respect to both its arguments as
    \begin{align*}
        \frac{\partial G_j^n}{\partial w}=1-\lambda  V_{\gep,j}^n \geq 0,
        \quad \frac{\partial G_j^n}{\partial u} =\lambda  V_{\gep,j-1}^n \geq 0, 
    \end{align*}
    \textcolor{blue}{due to the CFL condition (\ref{eq:sCFL}), the assumptions on $\gep$ (\ref{eq:construction_eps}) and the non negativity of $v_\gep$}.\\
    The monotonicity implies that 
    \begin{align}
        G_j^n (\rho_{j-1}^n \wedge c, \rho_{j}^n \wedge c)  &\geq  G_j^n (\rho_{j-1}^n,\rho_{j}^n) \wedge  G_j^n (c,c), \label{eq:a1}\tag{a} \\
        G_j^n (\rho_{j-1}^n \vee c, \rho_{j}^n \vee  c)  &\leq  G_j^n (\rho_{j-1}^n,\rho_{j}^n) \vee  G_j^n (c,c). \label{eq:b1}\tag{b}
    \end{align}
    Subtracting (\ref{eq:b1}) from (\ref{eq:a1}), we obtain
    \begin{align}\label{a2}
        \left\lvert G_j^{n}(\rho_{j-1}^n,\rho_{j}^n)-G_j^n(c,c) \right\rvert  &\leq G_j^n (\rho_{j-1}^n \wedge c, \rho_{j}^n \wedge c) - G_j^n (\rho_{j-1}^n \vee c, \rho_{j}^n \vee c) \nonumber \\
        &=|\rho_j^n-c|-\lambda \left( H_{j+1/2}^n(\rho_j^n) - H_{j-1/2}^n(\rho_{j-1}^n) \right). 
    \end{align}
    Now we estimate the left side of (\ref{a2}) via
    \begin{align}\label{b2}
        \left\lvert G_j^{n}(\rho_{j-1}^n,\rho_{j}^n)-G_j^n(c,c) \right\rvert &=
        \left\lvert \rho_j^{n+1}-c+\lambda \left( F_{j+1/2}^n(c)-F_{j-1/2}^n(c) \right) \right\rvert \nonumber \\
        &\geq \text{ sign}(\rho_j^{n+1}-c)\left(\rho_j^{n+1}-c+\lambda \left( F_{j+1/2}^n(c)-F_{j-1/2}^n(c) \right)\right) \nonumber \\
        &=\left\lvert \rho_j^{n+1}-c \right\rvert + \lambda \text{ sign}(\rho_j^{n+1}-c) \left( F_{j+1/2}^n(c)-F_{j-1/2}^n(c) \right),
    \end{align}
    where we have used the definition of $G_j^{n}$ and the properties of sign w.r.t. $\lvert \cdot \rvert$. 
    Lastly, combining (\ref{a2}) with (\ref{b2}) and subtracting $\left[  \lambda \text{ sign}(\rho_j^{n+1}-c) \left( F_{j+1/2}^n(c)-F_{j-1/2}^n(c) \right) \right] $
    on both sides yields the desired claim.
\end{proof}
\pagebreak 

\subsubsection{Convergence}\label{lem:convergence_app} 
The following proof concerns Lemma \ref{lem:convergence}.

\begin{proof}

    Let $\phi \in C_0^1([0,T)\times \R;\R^+)$ and set $\phi_j^n=\phi(t^n,x_j)$ $\forall j \in \Z, \, n\in \N$.
    We start by multiplying the discrete entropy inequality (\ref{eq:discrete_enrtropy_ineq}) by $\phi^n_j \Delta x$ and
    sum over space and time, i.e. $n \in \{0,\dots,N_T-1\}$ and $j \in \Z$, using $\lambda=\Delta t / \Delta x$:
    \begin{align*}
        \Delta x \Delta t \sum_{n=0}^{N_T-1} \sum_{j \in Z} 
        &- \Bigl(|\rho_j^{n+1}-c|-|\rho_j^n-c|\Bigr)  \phi_j^n/\Delta t \\
        &- \Bigl(H_{j+1/2}^n(\rho_j^n)-H_{j-1/2}^n(\rho_{j-1}^n)\Bigr) \phi_j^n/\Delta x\\
        &- \text{ sign}(\rho_j^{n+1}-c) \Bigl(F_{j+1/2}^n(c)-F_{j-1/2}^n(c)\Bigr) \phi_j^n/\Delta x\ \geq 0.
    \end{align*}
    Our goal is to show that the limit of this expression satisfies the entropy
     inequality of Definition \ref{def:nonlocal_weak_entropy_sol}.
    To achieve this, we will commence by breaking the above down into its main components using summation by parts 
    to obtain the partial derivatives $\partial_x \phi,\, \partial_t \phi$.
    We obtain
    \begin{align}
        &-\Delta x \sum_{j \in \Z} | \rho_j^{N_T} -c |\phi_j^{N_T} \label{eq:1}\\
        &+\Delta x \Delta t \sum_{n=0}^{N_T-1} \sum_{j \in Z} |\rho_j^{n+1}-c| (\phi_j^{n+1}-\phi_j^{n})/\Delta t 
        +\Delta x \sum_{j \in Z} |\rho_j^0-c| \phi_j^0       \label{eq:2} \\
        &+\Delta x \Delta t \sum_{n=0}^{N_T-1} \sum_{j \in Z} H_{j-1/2}^n(\rho_{j-1}^n) (\phi_j^n-\phi_{j-1}^n)/\Delta x \label{eq:3}\\
        &-\Delta x \Delta t \sum_{n=0}^{N_T-1} \sum_{j \in Z} \text{ sign}(\rho_j^{n+1}-c) \Bigl(F_{j+1/2}^n(c)-F_{j-1/2}^n(c)\Bigr) \phi_j^n/\Delta x\ \geq 0. \label{eq:4}
    \end{align}
    We now analyze every term individually.
    Since $\phi \in C_0^1([0,T)\times \R;\R^+)$ and $\rho^{\Delta x} \rightarrow \rho,\, \Delta x \rightarrow 0$ in $L^1_{loc}$ by assumption,
    it follows  that
       \begin{align*}
            (\ref{eq:1}) \rightarrow \, -  \int_{\R} |\rho_T(x)-c| \phi(T,x) \,dx=0,\quad \Delta x \rightarrow 0.
        \end{align*}\\
    As per assumption $\rho^{\Delta x} \rightarrow \rho,\, \Delta x \rightarrow 0$ in $L^1_{loc}$ and 
        $\dfrac{\phi_j^{n+1}-\phi_j^{n}}{\Delta t} \rightarrow \partial_t \phi, \, \Delta t \rightarrow 0$, we also obtain
        \begin{align*}
            (\ref{eq:2}) \rightarrow  \, \int_{0}^{T} \int_{\R} |\rho-c| \partial_t \phi \,dx  \,dt 
                + \int_{\R} |\rho_0(x)-c| \phi(0,x) \,dx,\quad \Delta x \rightarrow 0.
        \end{align*} \\
    Next, by definition of $H_{j-1/2}^n$ we have
        \begin{align*}
            H_{j-1/2}^n(\rho_{j-1}^n) 
            &= F_{j-1/2}^n(\rho_{j-1}^n \wedge c)-F_{j-1/2}^n(\rho_{j-1}^n \vee c) \\
            &= \text{sign}(\rho_{j-1}^n-c) \Bigl(F_{j-1/2}^n(\rho_{j-1}^n)-F_{j-1/2}^n(c)\Bigr).
        \end{align*}
        Further, as
        $\dfrac{\phi_j^{n}-\phi_{j-1}^{n}}{\Delta x} \rightarrow \partial_x \phi, \, \Delta x \rightarrow 0$
        and per assumption
        \begin{align*}
            F_{j-1/2}^n(\rho_{j-1}^n)  \rightarrow f_{\textcolor{blue}{\gep}}(t,x,\rho), \quad \Delta x \rightarrow 0 \text{ in } L^1_{loc},
        \end{align*}
        we can also conclude
        \begin{align*}
            (\ref{eq:3}) \rightarrow \int_{0}^{T} \int_{\R} \text{sign}(\rho-c) 
            \Bigl(f_{\textcolor{blue}{\gep}}(t,x,\rho)-f_{\textcolor{blue}{\gep}}(t,x,c)\Bigr) \,dx  \,dt,
            \quad \Delta x \rightarrow 0,
        \end{align*}
        \textcolor{blue}{due to our construction of a well-posed time integrable random process $\gep(t,\omega)$}. \\
        However, a more detailed evaluation is required for the last term. 
        We apply the definition of the numerical flux, introduce a zero term and then obtain
        \begin{align}
            (\ref{eq:4})
            \overset{\text{def.}}&{=}&& -\Delta x \Delta t \sum_{n=0}^{N_T-1} \sum_{j \in Z} \text{sign}(\rho_j^{n+1}-c) 
            \left(c \frac{V_{\gep,j}^n-V_{\gep,j-1}^n}{\Delta x}   \right) \phi_j^n \nonumber \\
            \overset{\pm 0}&{=}&& -\Delta x \Delta t \sum_{n=0}^{N_T-1} \sum_{j \in Z} 
            \Bigl(\text{sign}(\rho_j^{n+1}-c)-\text{sign}(\rho_j^{n}-c)\Bigr)
            \left(c \frac{V_{\gep,j}^n-V_{\gep,j-1}^n}{\Delta x}   \right) \phi_j^n   \label{eq:5} \\   
            &&& -\Delta x \Delta t \sum_{n=0}^{N_T-1} \sum_{j \in Z} 
            \text{sign}(\rho_j^{n}-c)
            \left(c \frac{V_{\gep,j}^n-V_{\gep,j-1}^n}{\Delta x}   \right) \phi_j^n. \label{eq:6}
        \end{align}
        Due to (\ref{eq:bound_on_diff_deltax}) it holds
         $| V_{\gep,j}^{n}- V_{\gep,j-1}^{n}| \leq \Delta x  W_\eta(0)  \eta \textcolor{blue}{\lVert v_\gep \rVert} \lVert \rho \rVert 
         =\mathcal{O}(\Delta x)$, and we can bound the finite differences to obtain for the second term
         \begin{align*}
            (\ref{eq:6}) \rightarrow - \int_{0}^{T} \int_{\R} \text{sign}(\rho-c) (c \partial_x V_\gep) \phi \,dx \,dt, 
             \quad \Delta x \rightarrow 0.
         \end{align*}
        It remains to show, that the first term vanishes.
        To achieve this, we perform summation by parts,
        adding and subtracting $(V_{\gep,j}^{n+1}-V_{\gep,j-1}^{n+1})\phi_j^n$ in the process 
        and obtain
        \begin{align}\label{eq:7}
            (\ref{eq:5})= \,
            &\Delta t \sum_{n=0}^{N_T-2} \sum_{j \in Z} \text{sign}(\rho_j^{n+1}-c) \phi_j^n c
            \Bigl[(V_{\gep,j}^{n+1}- V_{\gep,j-1}^{n+1})-(V_{\gep,j}^{n}- V_{\gep,j-1}^{n})\Bigr]\\
            +&\Delta t \Delta t \Delta x \sum_{n=0}^{N_T-2} \sum_{j \in Z} \text{sign}(\rho_j^{n+1}-c) c 
                \frac{V_{\gep,j}^{n+1}- V_{\gep,j-1}^{n+1}}{\Delta x} \frac{\phi_{j}^{n+1}-\phi_{j}^{n}}{\Delta t}\nonumber  \\
            +&\Delta t \Delta x \sum_{j \in Z} \text{sign}(\rho_j^{0}-c) \phi_j^0 c 
                \frac{V_{\gep,j}^{0}- V_{\gep,j-1}^{0}}{\Delta x}\nonumber\\
            -&\Delta t \Delta x \sum_{j \in Z} \text{sign}(\rho_j^{N_T-1}-c) \phi_j^{N_T-1} c 
                \frac{V_{\gep,j}^{N_T-1}- V_{\gep,j-1}^{N_T-1}}{\Delta x}.\nonumber
        \end{align}
        The occurring differences can all be bounded by the same argument as above for $| V_{\gep,j}^{n}- V_{\gep,j-1}^{n}| $ and therefore 
        the last three terms vanish as $\Delta x \rightarrow 0$ and $\Delta t \rightarrow 0$ respectively,
        further using the compactness of the test functions as e.g. in (\ref{eq:1}).
        For the first term we derive as in (\ref{eq:max_p_V_eq})
        \begin{align}\label{eq:8}
            (V_{\gep,j}^{n+1}- V_{\gep,j-1}^{n+1})-(V_{\gep,j}^{n}- V_{\gep,j-1}^{n})
            = \sum_{k=1}^{N_\eta-1}
            &(\gamma_{k-1}-\gamma_{k}) 
                &&\Bigl(v_\gep^{\textcolor{blue}{n+1}}  (\rho_{j+k}^{n+1})-v_\gep^{\textcolor{blue}{n}}(\rho_{j+k}^{n}) \Bigr)\nonumber\\
            +&\;\gamma_{N_\eta-1}
                &&\Bigl(v_\gep^{\textcolor{blue}{n+1}}  (\rho_{j+N_\eta}^{n+1})-v_\gep^{\textcolor{blue}{n}}(\rho_{j+N_\eta}^{n}) \Bigr)\nonumber\\
            -&\;\gamma_{0}
                &&\Bigl(v_\gep^{\textcolor{blue}{n+1}} (\rho_{j}^{n+1})-v_\gep^{\textcolor{blue}{n}}(\rho_{j}^{n}) \Bigr).
        \end{align}

        We once again use the compact support of $\phi$ in space and time, and notice, that
        there must exist an $R$, such that $\phi(t,x)=0, \; \forall t>0$ and $|x|>R$. For the discrete variant we
        choose the indices $j_0,j_1 \in \Z$, such that $-R \in (x_{j_0-1/2},x_{j_0+1/2}]$ and $R \in (x_{j_1-1/2},x_{j_1+1/2}]$.
        Then it follows that $\sum_{j \in \Z} \phi_j^n = \sum_{j=j_0}^{j_1}\phi_j^n$.\\

        With the above, we can now plug (\ref{eq:8}) in (\ref{eq:7}), multiply by $\Delta x / \Delta x$ and use
        the obvious bound on $\phi$ to obtain
        \begin{align*}
            (\ref{eq:7}) \leq \frac{\Delta t}{\Delta x} {\left\lVert \phi \right\rVert}_{L^\infty([0,T]\times[-R,R])}c
            \Biggl|
                \sum_{k=1}^{N_\eta-1} 
                &(\gamma_{k-1}-\gamma_{k}) &&\Delta x \sum_{n=0}^{N_T-2} \sum_{j=j_0}^{j_1}
                    v_\gep^{\textcolor{blue}{n+1}}(\rho_{j+k}^{n+1})-v_\gep^{\textcolor{blue}{n}}(\rho_{j+k}^{n}) \\
                +&\;\gamma_{N_\eta-1} &&\Delta x \sum_{n=0}^{N_T-2} \sum_{j=j_0}^{j_1}
                    v_\gep^{\textcolor{blue}{n+1}}(\rho_{j+N_\eta}^{n+1})-v_\gep^{\textcolor{blue}{n}}(\rho_{j+N_\eta}^{n}) \\
                -&\;\gamma_{0} &&\Delta x \sum_{n=0}^{N_T-2} \sum_{j=j_0}^{j_1}
                    v_\gep^{\textcolor{blue}{n+1}}(\rho_{j}^{n+1})-v_\gep^{\textcolor{blue}{n}}(\rho_{j}^{n}) \;
            \Biggr|
            .
        \end{align*}
        
        \textcolor{blue}{
        A major difference to \cite[2.2.6]{Friedrich2021_diss} is the appearance of the noise induced time dependency of the velocity function, i.e. $v_\gep^{n+1}$, $v_\gep^{n}$. 
         Therefore, we cannot simply apply the mean value theorem, but have bound to  the  differences of the noise. By the Lipschitz-continuity of $x \mapsto x^+$ it holds that
        \begin{align*}
            v_\gep^{n+1}(\rho_{j}^{n+1})-v_\gep^{n}(\rho_{j}^{n}) &= 
            \bigl(v(\rho_{j}^{n+1})-\gep(t_{n+1})\bigr)^+-\bigl(v(\rho_{j}^{n})-\gep(t_{n})\bigr)^+ \\
            &=\bigl(v(\rho_{j}^{n+1})-\gep(t_{n})+\gep(t_{n})-\gep(t_{n+1})\bigr)^+-\bigl(v(\rho_{j}^{n})-\gep(t_{n})\bigr)^+ \\
            &\leq \lVert v_\gep' 
            \rVert \bigl(|\rho_{j}^{n+1}-\rho_{j}^{n})|+|\gep(t_{n+1})-\gep(t_{n})|\bigr).
        \end{align*}
        }
        Hence, it follows by $\gamma_{k-1}-\gamma_{k}\geq0$ and the triangle inequality  
        \begin{align*}
            (\ref{eq:7}) &\leq \frac{\Delta t}{\Delta x} {\left\lVert \phi \right\rVert}_{L^\infty([0,T]\times[-R,R])}c \textcolor{blue}{\left\lVert v_\gep'\right\rVert}  \\
            &\quad\quad\quad\quad\quad\quad \cdot\Biggl(
                \sum_{k=1}^{N_\eta-1} 
                (\gamma_{k-1}-\gamma_{k}) &&\Delta x \sum_{n=0}^{N_T-2} \sum_{j=j_0}^{j_1}
                    |\rho_{j+k}^{n+1}-\rho_{j+k}^{n}|+ \textcolor{blue}{|\gep(t_{n+1})-\gep(t_{n})|}\\
                &\quad\quad\quad\quad\quad\quad\quad\quad \;\;+\gamma_{N_\eta-1} &&\Delta x \sum_{n=0}^{N_T-2} \sum_{j=j_0}^{j_1}
                     |\rho_{j+N_\eta}^{n+1}-\rho_{j+N_\eta}^{n}|+ \textcolor{blue}{|\gep(t_{n+1})-\gep(t_{n})|}\\
                &\quad\quad\quad\quad\quad\quad\quad\quad \;\;+\gamma_{0} &&\Delta x \sum_{n=0}^{N_T-2} \sum_{j=j_0}^{j_1}
                    |\rho_{j}^{n+1}-\rho_{j}^{n}|+ \textcolor{blue}{|\gep(t_{n+1})-\gep(t_{n})|}                    \; \Biggr).
        \end{align*}        
        The terms of the form 
        $\Delta x \sum_{n=0}^{N_T-2} \sum_{j=j_0}^{j_1}|\rho_{j}^{n+1}-\rho_{j}^{n}|$ 
        can be bounded as in (\ref{eq:K_1_gep_T}), such that  
        \begin{align*}
            (\ref{eq:7})
            &\leq \frac{\Delta t}{\Delta x} {\left\lVert \phi \right\rVert}_{L^\infty([0,T]\times[-R,R])}c\textcolor{blue}{\left\lVert v_\gep'\right\rVert}
            \Bigl(2 \gamma_0 T \textcolor{blue}{\mathcal{K}_\gep(T)}   \\
            &\quad\quad\quad\quad\quad\quad\quad\quad\quad\quad\quad\quad\quad 
            \textcolor{blue}{+\sum_{k=1}^{N_\eta-1} 
                (\gamma_{k-1}-\gamma_{k})} && \textcolor{blue}{\Delta x \sum_{n=0}^{N_T-2} \sum_{j=j_0}^{j_1}
                     |\gep(t_{n+1})-\gep(t_{n})|} \\
                &\quad\quad\quad\quad\quad\quad\quad\quad\quad\quad\quad\quad\quad 
                \textcolor{blue}{+\gamma_{N_\eta-1}} && \textcolor{blue}{\Delta x \sum_{n=0}^{N_T-2} \sum_{j=j_0}^{j_1}
                     |\gep(t_{n+1})-\gep(t_{n})|} \\
                &\quad\quad\quad\quad\quad\quad\quad\quad\quad\quad\quad\quad\quad 
                \textcolor{blue}{+\gamma_{0}} && \textcolor{blue}{\Delta x \sum_{n=0}^{N_T-2} \sum_{j=j_0}^{j_1}
                    |\gep(t_{n+1})-\gep(t_{n})|}  \Bigr).
        \end{align*}
        \textcolor{blue}{Since $\gep \in \text{BV}$ we may bound the differences by the respective norm, which gives}
        \begin{align*}
        \textcolor{blue}{
        \Delta x \sum_{n=0}^{N_T-2} \sum_{j=j_0}^{j_1}
                    |\gep(t_{n+1})-\gep(t_{n})| \leq         
                    \Delta x \sum_{j=j_0}^{j_1}  \lVert \gep \rVert_{\text{BV}} \leq
                   \lVert \gep \rVert_{\text{BV}} \,2 R.}
        \end{align*}
        Thus, we can conclude
        \begin{align*}
            (\ref{eq:7}) 
            &\leq \frac{\Delta t}{\Delta x} {\left\lVert \phi \right\rVert}_{L^\infty([0,T]\times[-R,R])}c\textcolor{blue}{\left\lVert v_\gep'\right\rVert}
            \Bigl(2 \gamma_0 \bigl( T \textcolor{blue}{\mathcal{K}_\gep(T)\textcolor{blue}{+ \lVert \gep \rVert_{\text{BV}} \,2 R}} \bigr) 
            \Bigr)\\
            & \leq   \lambda \Delta x{\left\lVert \phi \right\rVert}_{L^\infty([0,T]\times[-R,R])}c\textcolor{blue}{\left\lVert v_\gep'\right\rVert}
            \Bigl(2 W_\eta(0) \bigl(  T \textcolor{blue}{\mathcal{K}_\gep(T) \textcolor{blue}{+\lVert \gep \rVert_{\text{BV}} \,2 R}} \bigr)
            \Bigr),
        \end{align*}
        using $\lambda=\Delta t /\Delta x$ and $\gamma_0 \leq W_\eta(0) /\Delta x$. This estimate converges to zero as $\Delta x \rightarrow 0$,
        hence finishing the proof.
\end{proof}

\subsubsection{Uniqueness }\label{thm:uniqe_sNV_app} 
We now present the detailed proof for Theorem \ref{thm:uniqe_sNV}.
\begin{proof} 
    The main work here is to show that $v_\gep$ is a valid substitute to $v$ as in the deterministic model (\ref{eq:NV}). 
    As $\rho,\sigma$ are weak entropy solutions of
    \begin{align*}
        &\partial_t \rho(t,x) + \partial_x \bigl(\rho(t,x) V_\gep(t,x)\bigr)=0,  && V_\gep=W_\eta*\textcolor{blue}{v_\gep}(\rho,\textcolor{blue}{t}),  &\rho(0,x)=\rho_0(x),  \\
        &\partial_t \sigma(t,x) + \partial_x \bigl(\sigma(t,x) U_\gep(t,x)\bigr)=0,  && U_\gep=W_\eta*\textcolor{blue}{v_\gep}(\sigma,\textcolor{blue}{t}),  &\sigma(0,x)=\sigma_0(x),  
    \end{align*}
    the following holds
    \begin{enumerate}
        \item $0 \leq V_\gep,\, U_\gep \leq v^{max} \textcolor{blue}{+\tau}$, since \\
         $0 < W_\eta,\, W_0=1$ and \textcolor{blue}{$0 \leq v_\gep \leq v^{max}+\tau$ by construction}.
        \item $\left\lvert \partial_x V_\gep(t,x)\right\rvert , \left\lvert \partial_x U_\gep(t,x)\right\rvert < \infty$, since\\
        \begin{align}\label{eq:2.3.1}
            \left| \partial_x V_\gep(t,x) \right| & =  \bigg| \int_{x}^{x+\eta} W_\eta'(y-x) v_\gep(\rho(t,y),t)  \,dy \nonumber \\
            & + W_\eta(\eta) v_\gep(\rho(t,x+\eta),t) - W_\eta(0) v_\gep(\rho(t,x),t)  \bigg| \\
            & \leq \lVert W_\eta' \rVert  \lVert v_\gep \rVert \eta+ 2  \lVert v_\gep \rVert W_\eta(0) \nonumber\\
            & \leq \underbrace{\lVert W_\eta' \rVert  \left(v^{max}\textcolor{blue}{+\tau}\right)\eta + 2  
            \left(v^{max}\textcolor{blue}{+\tau}\right) W_\eta(0)}_{=:\mathcal{V}_{\textcolor{blue}{\gep}}^{\infty} } < \infty. \label{eq:V_cal}
        \end{align}  
        Here we used the Leibniz-rule, the triangle inequality
        as well as our  \textcolor{blue}{bound on $v_\gep$ (\ref{eq:v_e_norm_approx_final})}. The same holds for $\partial_x U_\gep$.
        \item $V_\gep$ and $U_\gep$ are Lipschitz continuous with respect to $x$, since 
        as of 2. we have bounded first derivatives and by the definition of nonlocal weak entropy solutions it follows
        $\rho,\sigma \in C([0,T];L^1(\R))$ with $\rho(t,\cdot),\sigma(t,\cdot) \in \text{BV}(\R;\R)$.
    \end{enumerate}

    By construction and considerations 1.-3. $V_\gep$ and $U_\gep$ satisfy the assumptions of Kru\v{z}kov
    \cite{Kruzkov1970}, allowing us 
    to apply the doubling of variables technique. Thus, as in \cite{Friedrich2018} we 
    obtain
    \begin{align}\label{eq:2.3.2}
        {\left\lVert \rho(t,\cdot)-\sigma(t,\cdot) \right\rVert}_{L^1(\R)} \leq  {\left\lVert \rho_0-\sigma_0 \right\rVert}_{L^1(\R)} 
        &+ \int_{0}^{T} \int_{\R}  |\partial_x \rho(t,x)| \left\lvert U_\gep(t,x)-V_\gep(t,x)\right\rvert   \,dx  \,dt \nonumber \\
        &+ \int_{0}^{T} \int_{\R}  |\rho(t,x)| \left\lvert \partial_x U_\gep(t,x)- \partial_x V_\gep(t,x)\right\rvert   \,dx  \,dt , 
    \end{align}
    where $\partial_x \rho$ has to be understood in the sense of distributions.
    Next, we 
    \textcolor{blue}{invoke the Lipschitz-continuity}, which gives
    \begin{align}\label{eq:2.3.3}
        \left\lvert U_\gep(t,x)-V_\gep(t,x)\right\rvert &=
        \int_{x}^{x+\eta} W_\eta(y-x) \Bigl(v_\gep(\rho(t,y),t)-v_\gep(\sigma(t,y),t)\Bigr)  \,dy \nonumber \\
        & \leq W_\eta(0) \textcolor{blue}{\left\lVert v_\gep' \right\rVert} {\left\lVert \rho(t,\cdot)-\sigma(t,\cdot) \right\rVert}_{L^1(\R)}.
    \end{align}
    \textcolor{blue}{Note that the norm for $\left\lVert v_\gep' \right\rVert$ had to be defined and bounded for two dimensions, 
    as we did in Definition \ref{def:norm} and equation (\ref{eq:Dv_e_norm_approx_final})}.  For the derivatives we obtain analogous to (\ref{eq:2.3.1})
    \begin{align}\label{eq:2.3.4}
        \left\lvert \partial_x U_\gep(t,x)- \partial_x V_\gep(t,x)\right\rvert 
        & \leq \lVert W_\eta' \rVert \textcolor{blue}{\left\lVert v_\gep' \right\rVert} {\left\lVert \rho(t,\cdot)-\sigma(t,\cdot) \right\rVert}_{L^1(\R)} \nonumber \\
        & + W_\eta(0) \textcolor{blue}{\left\lVert v_\gep' \right\rVert} \Bigl(|\rho-\sigma|(t,x+\eta)+|\rho-\sigma|(t,x)\Bigr),
    \end{align}
    where we once again used the \textcolor{blue}{Lipschitz-continuity of $v_\gep$} and by Remark \ref{rem:ass_W}: $W_\eta(\eta) \leq W_\eta(0)$. 
    Next we plug our bounds (\ref{eq:2.3.3}) and (\ref{eq:2.3.4}) into (\ref{eq:2.3.2}) and obtain
    \begin{align*}
        &{\left\lVert \rho(t,\cdot)-\sigma(t,\cdot) \right\rVert}_{L^1(\R)} \\ &\quad\quad\quad\leq  {\left\lVert \rho_0-\sigma_0 \right\rVert}_{L^1(\R)} 
        &&+W_\eta(0) \lVert v_\gep' \rVert \int_{0}^{T}  {\left\lVert \rho(t,\cdot)-\sigma(t,\cdot) \right\rVert}_{L^1(\R)} \int_{\R} |\partial_x \rho(t,x)|  \,dx  \,dt \\ \displaybreak
        &&&+\lVert W_\eta' \rVert \lVert v_\gep' \rVert \int_{0}^{T}  {\left\lVert \rho(t,\cdot)-\sigma(t,\cdot) \right\rVert}_{L^1(\R)} \int_{\R} |\rho(t,x)|  \,dx  \,dt \\
        &&&+W_\eta(0) \lVert v_\gep' \rVert \int_{0}^{T} \int_{\R} \Bigl( |\rho-\sigma|(t,x+\eta)+|\rho-\sigma|(t,x) \Bigr) |\rho(t,x)|  \,dx  \,dt\\
        &\quad\quad\quad \leq {\left\lVert \rho_0-\sigma_0 \right\rVert}_{L^1(\R)}
        &&+\lVert v_\gep' \rVert \Biggl(  \Bigl( W_\eta(0) \underbrace{\sup_{t \in [0,T]} \int_{\R} |\partial_x \rho(t,x)| \, dx}_{(a)} 
        +\lVert W_\eta' \rVert \underbrace{\sup_{t \in [0,T]}  \int_{\R} | \rho(t,x)| \, dx}_{(b)}\Bigr)   \\
        &&&\quad \quad \quad \quad \cdot \int_{0}^{T} {\left\lVert \rho(t,\cdot)-\sigma(t,\cdot) \right\rVert}_{L^1(\R)} \,dt  \\
        &&&+W_\eta(0) \underbrace{\int_{0}^{T} \int_{\R} \Bigl(|\rho-\sigma|(t,x+\eta)+|\rho-\sigma|(t,x)\Bigr) \,dx  \,dt}_{(c)} \\
        &&&\quad \quad \quad \quad \cdot  \underbrace{\sup_{t \in [0,T]}  \int_{\R} | \rho(t,x)| \, dx}_{(b)}
         \Biggr).
    \end{align*}
    Now
    \begin{align*}
        & (a) \leq \sup_{t \in [0,T]} {\left\lVert \rho(t,\cdot)\right\rVert}_{\text{BV}(\R)} 
        \text{, with } {\left\lVert \rho(t,\cdot)\right\rVert }_{\text{BV}(\R)}:=\text{ TV}(\rho(t,\cdot);\R),&&&\\
        & (b) = \sup_{t \in [0,T]} {\left\lVert \rho(t,\cdot)\right\rVert}_{L^1(\R)}, &&&\\
        & (c) = 2\int_{0}^{T} {\left\lVert \rho(t,\cdot)-\sigma(t,\cdot)\right\rVert}_{L^1(\R)} \,dt,
    \end{align*}
    and therefore
    \begin{align*}
    {\left\lVert \rho(t,\cdot)-\sigma(t,\cdot) \right\rVert}_{L^1(\R)} \leq  {\left\lVert \rho_0-\sigma_0 \right\rVert}_{L^1(\R)}
    +{\textcolor{blue}{K_\gep}} \int_{0}^{T} {\left\lVert \rho(t,\cdot)-\sigma(t,\cdot)\right\rVert}_{L^1(\R)} \,dt,
    \end{align*}
    with
    \begin{align*}
        {\textcolor{blue}{K_\gep}}= \textcolor{blue}{\lVert v_\gep' \rVert} \Biggl( 
                 W_\eta(0)  \left(
                    \sup_{t \in [0,T]}{\left\lVert \rho(t,\cdot)\right\rVert}_{\text{BV}(\R)} 
                    +2 \sup_{t \in [0,T]}{\left\lVert \rho(t,\cdot)\right\rVert}_{L^1(\R)}                   \right)
                +\lVert W_\eta' \rVert \sup_{t \in [0,T]}{\left\lVert \rho(t,\cdot)\right\rVert}_{L^1(\R)}   \Biggr).
    \end{align*}
    By Gronwall's lemma we get the desired claim and for the choice of $\rho_0=\sigma_0$ the uniqueness of weak entropy solutions.
    \textcolor{blue}{Further, note that this bound is still a random variable as $\lVert v_\gep' \rVert=\lVert v_\gep' \rVert(\omega)$ (Def. \ref{def:norm}), which
    we will address now.}\\
    
For the second claim we state the similar bound for (\ref{eq:NV}) as derived in \cite[2.4]{Friedrich2018}:   
    \begin{align*} 
        K= \lVert v' \rVert \Biggl( 
                 W_\eta(0)  \left(
                    \sup_{t \in [0,T]}{\left\lVert \rho(t,\cdot)\right\rVert}_{\text{BV}(\R)} 
                    +2 \sup_{t \in [0,T]}{\left\lVert \rho(t,\cdot)\right\rVert}_{L^1(\R)}                   \right)
                +\lVert W_\eta' \rVert \sup_{t \in [0,T]}{\left\lVert \rho(t,\cdot)\right\rVert}_{L^1(\R)}   \Biggr).
    \end{align*}
    \textcolor{blue}{Therefore, the
      deterministic bound is the same as for (\ref{eq:sNV}) except for $\lVert v_\gep' \rVert(\omega)$. 
    As shown in (\ref{eq:Dv_e_norm_approx_final})
    it holds that $\lVert v_\gep' \rVert(\omega) \leq \lVert v' \rVert \; \forall \omega \in \Omega$. Thus,
    the actual bound $K_\gep(\omega)$ for our stochastic model might even be lower than for (\ref{eq:NV}).
    In any case we obtain general uniqueness and can deterministically bound $K_\gep(\omega)$ by $K$.}
\end{proof} 

\end{document}